\numberwithin{equation}{section}
\newcommand{\R}{\mathbb{R}}
\newcommand{\N}{\mathbb{N}}
\newcommand{\PP}{\mathbb{P}}
\newcommand{\E}{\mathbb{E}}
\newcommand{\eps}{\varepsilon}
\newcommand{\LL}{L^2_{F^{-1}}}
\newcommand{\dd}{\textrm{d}}
\newtheorem{theorem}{Theorem}[section]
\newtheorem{lemma}[theorem]{Lemma}
\newtheorem{prop}[theorem]{Proposition}
\newtheorem{definition}{Definition}[section]
\newenvironment{proof}[1][Proof]{\begin{trivlist}
\item[\hskip \labelsep {\bfseries #1}]}{\end{trivlist}}
\begin{document}
\begin{center}
\textbf{FRACTIONAL DIFFUSION LIMIT FOR A STOCHASTIC KINETIC EQUATION}
\end{center}

\vspace{0.2cm}
\begin{center}
\small \sc{Sylvain De Moor}
\end{center}

\begin{center}
\footnotesize{ENS Cachan Bretagne - IRMAR, Université Rennes 1,\\ Avenue Robert Schuman, F-35170 Bruz, France}\\
Email: \texttt{sylvain.demoor@ens-rennes.fr}
\end{center}
\vspace{0.2cm}

\begin{abstract}
\footnotesize We study the stochastic fractional diffusive limit of a kinetic equation involving a small parameter and perturbed by a smooth random term. Generalizing the method of perturbed test functions, under an appropriate scaling for the small parameter, and with the moment method used in the deterministic case, we show the convergence in law to a stochastic fluid limit involving a fractional Laplacian.\\

\noindent \textbf{Keywords}: Kinetic equations, diffusion limit, stochastic partial differential equations, perturbed test functions, fractional diffusion.
\end{abstract}

\normalsize

\section{Introduction}
In this paper, we consider the following equation 
\begin{equation}
\label{eq}
\partial_tf^{\eps}+\frac{1}{\eps^{\alpha-1}}v\cdot\nabla_xf^{\eps}=
\frac{1}{\eps^{\alpha}}Lf^{\eps}+\frac{1}{\eps^{\frac{\alpha}{2}}}m^{\eps}f^{\eps} \;\;\mathrm{in}\;\R^+_t\times\R^d_x\times \R^d_v,
\end{equation}

\noindent with initial condition 
\begin{equation}\label{eq0}
f^{\eps}(0)=f^{\eps}_0\;\;\mathrm{in}\;\;\R^d_x\times \R^d_v,
\end{equation}

\noindent where $0<\alpha<2$, $L$ is a linear operator (see $(\ref{defdeL})$ below) and $m^{\eps}$ a random process depending on $(t,x)\in\R^+\times \R^d$ (see Section \ref{sectionmeps}). We will study the behaviour in the limit $\eps\to 0$ of its solution $f^{\eps}$. \\

\noindent The solution $f^{\eps}(t,x,v)$ to this kinetic equation can be interpreted as a distribution function of particles having position $x$ and degrees of freedom $v$ at time $t$. The variable $v$ belongs to the velocity space $\R^d$ that we denote by $V$.\\
The collision operator $L$ models diffusive and mass-preserving interactions of the particles with the surrounding
medium; it is given by 
\begin{equation}\label{defdeL}
Lf=\int_Vf\,\dd v\, F-f,
\end{equation}
where $F$ is a velocity equilibrium function such that $F\in L^{\infty}$, $F(-v)=F(v)$, $F>0$ a.e., $\int_V F(v)\dd v = 1$ and which is a power tail distribution
\begin{equation}\label{powertail}
F(v)\underset{|v|\to\infty}{\sim}\frac{\kappa_0}{|v|^{d+\alpha}}.
\end{equation}
Note that $F\in\ker(L)$. Power tail distribution functions arise in various contexts, such as astrophysical plasmas or in the study of granular media. For more details on the subject, we refer to \cite{MMM}. \\

\noindent In this paper, we derive a stochastic diffusive limit to the random kinetic model $(\ref{eq})-(\ref{eq0})$, using the method of perturbed test functions. This method provides an elegant way of deriving stochastic diffusive limit from random kinetic systems; it was first introduced by Papanicolaou, Stroock and Varadhan \cite{psv}. The book of Fouque, Garnier, Papanicolaou and Solna \cite{fgps} presents many applications to this method. A generalisation in infinite dimension of the perturbed test functions method arose in recent papers of Debussche and Vovelle \cite{arnaudjulien} and de Bouard and Gazeau \cite{debouard}. \\
For the random kinetic model $(\ref{eq})-(\ref{eq0})$, the case $\alpha=2$ and $v$ replaced by $a(v)$ where $a$ is bounded is derived in the paper of Debussche and Vovelle  \cite{arnaudjulien}. Here we study a different scaling parametrized by $0<\alpha<2$ and we relax the boundedness hypothesis on $a$ since we study the case $a(v)=v$. Note that, in our case, in order to get a non-trivial limiting equation as $\eps$ goes to $0$, we exactly must have $a(v)$ unbounded; furthermore, we can easily extend the result to velocities of the form $a(v)$ where $a$ is a $C^1$-diffeomorphism from $V$ onto $V$. In the deterministic case, i.e. $m^{\eps}\equiv 0$, Mellet derived in \cite{mellet} and \cite{MMM} with Mouhot and Mischler a diffusion limit to this kinetic equation involving a fractional Laplacian. As a consequence, for the random kinetic problem $(\ref{eq})-(\ref{eq0})$, we expect a limiting stochastic equation with a fractional Laplacian. \\
As in the deterministic case, the fact that the equilibrium $F$ have an appropriate growth when $|v|$ goes to $+\infty$, namely of order $|v|^{-d-\alpha}$, is essential to derive a non-trivial limiting equation when $\eps$ goes to $0$. \\
To derive a stochastic diffusive limit to the random kinetic model $(\ref{eq})-(\ref{eq0})$, we use a generalisation in infinite dimension of the perturbed test functions method. Nevertheless, the fact that the velocities are not bounded gives rise to non-trivial difficulties to control the transport term $v\cdot\nabla_x$. As a result, we also use the moment method applied in \cite{mellet} in the deterministic case. The moment method consists in working on weak formulations and in introducing new auxiliary problems, namely in the deterministic case 
$$
\chi^{\eps}-\eps v\cdot\nabla_x\chi^{\eps}=\varphi,
$$
where $\varphi$ is some smooth function;
 thus we introduce in the sequel several additional auxiliary problems to deal with the stochastic part of the kinetic equation. Solving these problems is based on the inversion of the operator $L-\eps A+M$ where $M$ is the infinitesimal generator of the driving process $m$. Finally, we have to combine appropriately the moment and the perturbed test functions methods.\\

\noindent We also point out similar works using a more probabilistic approach of Basile and Bovier \cite{basilebovier} and Jara, Komorowski and Olla \cite{jara}.

\section{Preliminaries and main result}
\subsection{Notations}\label{notations}
In the sequel, $\LL$ denotes the $F^{-1}$ weighted $L^2(\R^d\times V)$ space equipped with the norm  
$$\|f\|^2:=\int_{\R^d}\!\int_{V}\frac{|f(x,v)|^2}{F(v)}\,\dd v\dd x.$$
We denote its scalar product by $(.,.)$. We also need to work in the space $L^2(\R^d)$, or $L^2_x$ for short. The scalar product in $L^2_x$ will be denoted by $(.,.)_x$. When $f\in\LL$, we denote by $\rho$ the first moment of $f$ over $V$ i.e. $\rho=\int_Vf\,\dd v$. We often use the following inequality 
$$\|\rho\|_{L^2_x}\leq\|f\|,$$
which is just Cauchy-Schwarz inequality and the fact that $\int_V F(v)\,\dd v=1$. 
\noindent Finally, $\mathcal{S}(\R^d)$ stands for the Schwartz space on $\R^d$, and $\mathcal{S}'(\R^d)$ for the space of tempered distributions on $\R^d$. \\

\noindent We recall that the operator $L$ is defined by $(\ref{defdeL})$. It can easily be seen that $L$ is a bounded operator from $\LL$ to $\LL$. Note also that $L$ is dissipative since, for $f\in\LL$, 
\begin{equation}\label{dissip}
(Lf,f)=-\|Lf\|^2.
\end{equation}
In the sequel, we denote by $g(t,\cdot)$ the semi-group generated by the operator $L$ on $\LL$. It satisfies, for $f\in\LL$,
$$
\left\{
\begin{aligned}
\frac{d}{\dd t}g(t,f)&=Lg(t,f), \\
g(0,f)&=f,
\end{aligned}
\right.
$$
and it is given by
$$g(t,f)=\int_V f\,\dd v \,F(1-e^{-t})+fe^{-t},\quad t\geq 0,\; f\in\LL,$$
so that $g(t,\cdot)$ is a contraction, that is, for $f\in\LL$,
\begin{equation}\label{contract}
\|g(t,f)\|\leq\|f\|,\quad t\geq 0.
\end{equation}

\noindent We now introduce the following spaces $S^{\gamma}$ for $\gamma\in\R$.
First, we define the following operator on $L^2(\R^d)$
$$J:=-\Delta_x+|x|^2,$$
with domain $$\mathrm{D}(J):=\left\lbrace f\in L^2(\R^d),\;\Delta_xf,\; |x|^2f\in L^2(\R^d)\right\rbrace.$$ 
Let $(p_j)_{j\in\N^d}$ be the Hermite functions, defined as
$$p_j(x_1,...,x_d):=H_{j_1}(x_1)\cdots H_{j_d}(x_d)e^{-\frac{|x|^2}{2}},$$
where $j=(j_1,...,j_d)\in\N^d$ and $H_i$ stands for the $i-$th Hermite's polynomial on $\R$. The functions $(p_j)_{j\in\N^d}$ are the eigenvectors of $J$ with associated eigenvalues $(\mu_j)_{j\in\N^d}:=(2|j|+1)_{j\in\N^d}$ where $|j|:=|j_1|+\cdots+|j_d|$. Furthermore, one can check that $J$ is invertible from $\mathrm{D}(J)$ to $L^2(\R^d)$, and that it is self-adjoint. As a result, we can define $J^{\gamma}$ for any $\gamma\in\R$.\\
Then, for $\gamma\in\R$, we can also view $J^{\gamma}$ as an operator on $\mathcal{S}'(\R^d)$. Let $u\in\mathcal{S}'(\R^d)$, we define $J^{\gamma}u\in\mathcal{S}'(\R^d)$ by setting, for all $\varphi\in\mathcal{S}(\R^d)$,
$$\langle J^{\gamma}u,\varphi\rangle:=\langle u,J^{\gamma}\varphi\rangle.$$
Finally, we introduce, for $\gamma\in\R$,
$$S^{\gamma}(\R^d):=\{u\in\mathcal{S}'(\R^d),\;J^{\frac{\gamma}{2}}u\in L^2(\R^d)\},$$
equipped with the norm $$\|u\|_{S^{\gamma}(\R^d)}=\|J^{\frac{\gamma}{2}}u\|_{L^2(\R^d)}.$$
In the sequel, we need to know the asymptotic behaviour of the quantities $\|p_j\|_{L^2_x}$, $\|\nabla_xp_j\|_{L^2_x}$, $\|D^2p_j\|_{L^2_x}$ and $\|(-\Delta)^{\frac{\alpha}{2}}p_j\|_{L^2_x}$ as $|j|\to\infty$. In fact, classical properties of the Hermite functions give the following bounds
\begin{equation}\label{taillej}
\begin{aligned}
&\|p_j\|_{L^2_x}=1, \quad && \|\nabla_xp_j\|_{L^2_x}\leq {\mu_j}^{\frac{1}{2}},\\
&\|D^2p_j\|_{L^2_x}\leq \mu_j, \quad && \|(-\Delta)^{\frac{\alpha}{2}}p_j\|_{L^2_x}\leq 1+\mu_j.
\end{aligned}
\end{equation} 
We finally recall the definition of the fractional power of the Laplacian. It can be introduced using the Fourier transform in $\mathcal{S}'(\R^d)$ by setting, for $u\in\mathcal{S}'(\R^d)$,
$$\mathcal{F}((-\Delta)^{\frac{\alpha}{2}}u)(\xi)=|\xi|^{\alpha}\mathcal{F}(u)(\xi).$$
Alternatively, we have the following singular integral representation, see \cite{laplfrac},
$$-(-\Delta)^{\frac{\alpha}{2}}u(x)=c_{d,\alpha}\mathrm{PV}\int_{\R^d}\left[u(x+h)-u(x)\right]\frac{\dd h}{|h|^{d+\alpha}},$$
for some constant $c_{d,\alpha}$ which only depends on $d$ and $\alpha$.

\subsection{The random perturbation}\label{sectionmeps}

The random term $m^{\eps}$ is defined by 
$$m^{\eps}(t,x):=m\left(\frac{t}{\eps^{\alpha}},x\right),$$ 
where $m$ is a stationary process on a probability space $(\Omega,\mathcal{F},\PP)$ and is adapted to a filtration $(\mathcal{F}_t)_{t\geq 0}$. Note that $m^{\eps}$ is adapted to the filtration $(\mathcal{F}^{\eps}_t)_{t\geq 0}=(\mathcal{F}_{\eps^{-\alpha}t})_{t\geq 0}.$ We assume that, considered as a random process with values in a space of spatially dependent functions, $m$ is a stationary homogeneous Markov process taking values in a subset $E$ of $L^2(\R^d)\cap W^{1,\infty}(\R^d)$. In the sequel, $E$ will be endowed with the norm $\|\cdot\|_{\infty}$ of $L^{\infty}(\R^d)$. Besides, we denote by $\mathcal{B}(E,X)$ (or $\mathcal{B}(E)$ when $X=\R$) the set of bounded functions from $E$ to $X$ endowed with the norm $\|g\|_{\infty}:=\sup_{n\in E}\|g(n)\|_X$ for $g\in\mathcal{B}(E,X)$. \\
We assume that $m$ is stochastically continuous. Note that $m$ is supposed not to depend on the variable $v$. For all $t\geq 0$, the law $\nu$ of $m_t$ is supposed to be centered
$$\E[m_t]=\int_E n\,\dd \nu(n)=0.$$
The subset $E$ has the following properties. We fix a family $(\eta_i)_{i\in\N}$ of functions in $W^{1,\infty}(\R^d)$ such that 
$$S:=\sum\limits_{i\in\N}\|\eta_i\|_{W^{1,\infty}}<\infty,$$
and we assume that every $n\in E$ can be uniquely written as
\begin{equation}\label{decompositionden}
n=\sum\limits_{i\in\N}n_i(n)\eta_i,
\end{equation}
with $|n_i(n)|\leq K$ for all $i\in\N$ and all $n\in E$. Note that the preceding series converges absolutely and that $E$ is included in the ball $\mathrm{B}(0,KS)$ of $W^{1,\infty}(\R^d)$. Finally, since $m$ is centered, we also suppose that for all $i\in\N$, \begin{equation}\label{nicentres}
\int_En_i(n)\dd\nu(n)=0.
\end{equation}

\noindent We denote by $e^{tM}$ a transition semi-group on $E$ associated to $m$. We suppose that the transition semi-group is Feller i.e. $e^{tM}$ maps continuous functions of $n$ on continuous functions of $n$ for all $t\geq 0$. In the sequel we also need to consider $e^{tM}$ as a transition semi-group on the space $\mathcal{B}(E,\LL)$ and not only on $\mathcal{B}(E)$. Thus, if $g\in \mathcal{B}(E,\LL)$, $e^{tM}$ acts on $g$ pointwise, that is, 
$$[\widetilde{e^{tM}}g](x,v)=e^{tM}[g(x,v)],\quad (x,v)\in\R^d\times V.$$ 
In both cases, we denote by $M$ the infinitesimal generator associated to the transition semi-group. Note that we do not distinguish on which space $\mathcal{B}(E,X)$, $X=\R$ or $\LL$, the operators are acting since it will always be clear from the context. Then, for $X=\R$ or $X=\LL$, $\mathrm{D}(M)$ stands for the domain of $M$; it is defined as follows:
$$\mathrm{D}(M):=\left\{u\in \mathcal{B}(E,X), \,\lim\limits_{h\to 0}\frac{e^{hM}-I}{h}u \textrm{ exists in } \mathcal{B}(E,X)\right\},$$
and if $u\in\mathrm{D}(M)$, we set
$$Mu:=\lim\limits_{h\to 0}\frac{e^{hM}-I}{h}u \textrm{ in } \mathcal{B}(E,X).$$

\noindent We suppose that there exists $\mu>0$ such that for all $g\in \mathcal{B}(E)$ verifying the condition $\int_Eg(n)\dd\nu(n)=0$,
\begin{equation}\label{decroiss}
\|e^{tM}g\|_{\infty}\leq e^{-\mu t}\|g\|_{\infty},\quad t\geq 0.
\end{equation}

\noindent Moreover, we suppose that $m$ is ergodic and satisfies some mixing properties in the sense that there exists a subspace $\mathscr{P}_M$ of $\mathcal{B}(E)$ such that for any $g\in\mathscr{P}_M$, the Poisson equation
$$M\psi=g-\int_Eg(n)\,\dd\nu(n)=:\widehat{g},$$
has a unique solution $\psi\in\mathrm{D}(M)$ satisfying $\int_E\psi(n)\,\dd\nu(n)=0$. We denote by $M^{-1}\widehat{g}$ 
this unique solution, and assume that it is given by
\begin{equation}\label{inversedeM}
M^{-1}\widehat{g}(n)= \int_0^{\infty}e^{tM}\widehat{g}(n)\dd t,\quad n\in E.
\end{equation}
\noindent Thanks to $(\ref{decroiss})$, the above integral is well defined. In particular, it implies that for all $n\in E$,
$$\lim\limits_{t\to\infty}e^{tM}\widehat{g}(n)= 0.$$
We assume that for all $i\in\N$, $n\mapsto n_i(n)$ is in $\mathscr{P}_M$ and that for all $n\in E$, $|M^{-1}n_i(n)|\leq K$. As a consequence, we simply define $M^{-1}I$ by
$$M^{-1}I(n):=\sum_{i\in\N}M^{-1}n_i(n)\eta_i,\quad n\in E.$$

\noindent We also suppose that for all $f\in L^2(\R^d)$, the functions $g_f:n\in E\mapsto(f,n)_x$ and $n\in E\mapsto M^{-1}g_f(n)$ are  in $\mathscr{P}_M$.\\

\noindent We will suppose that for all $t\geq 0$,
\begin{equation}\label{pournoyau}
\E\|m_t\|^2_{L^2_x}<\infty,\quad \E\|M^{-1}I(m_t)\|^2_{L^2_x}<\infty.
\end{equation}
To describe the limiting stochastic PDE, we then set 
$$k(x,y)=\E\int_{\R}m_0(y)m_t(x)\,\dd t,\quad x,y\in\R^d.$$
The kernel $k$ is, thanks to $(\ref{pournoyau})$, the fact that $m$ is stationary and Cauchy-Schwarz inequality, in $L^2(\R^d\times \R^d)$ and such that 
$$\int_{\R^d} k(x,x)\,\dd x<\infty.$$ Furthermore, we can check (see \cite{arnaudjulien}), since $m$ is stationary, that $k$ is symmetric. As a result, we introduce the operator $Q$ on $L^2(\R^d)$ associated to the kernel $k$ 
$$Qf(x)=\int_{\R^d}k(x,y)f(y)\,\dd y,\quad x\in\R^d,$$
which is self-adjoint and trace class. Furthermore, since we assumed that the functions $g_f:n\in E\mapsto(f,n)_x$ and $n\in E\mapsto M^{-1}g_f(n)$ are  in $\mathscr{P}_M$, we can show, see \cite[Lemma 1]{arnaudjulien}, that $Q$ is non-negative, that is $(Qf,f)_x\geq 0$ for all $f\in L^2(\R^d)$. As a result, we can define the square root $Q^{\frac{1}{2}}$ which is Hilbert-Schmidt on $L^2(\R^d)$.\\

\noindent It remains to make some hypothesis on $M$. We set, for all $n\in E$,
\begin{equation}\label{deftheta}
\theta(n)=\int_EnM^{-1}I(n)\dd\nu(n)-nM^{-1}I(n),
\end{equation}
and, for $i,j\in\N$, $\theta_{i,j}=\int_En_iM^{-1}n_j\dd\nu-n_iM^{-1}n_j$, so that 
$$\theta=\sum_{i,j\in\N}\theta_{i,j}\eta_i\eta_j.$$

\noindent We suppose that for all $i,j,k,l\in\N$ and $s,t \geq 0$, 
\begin{equation}\label{dansdomaine}
\left\{
\begin{aligned}
&n\mapsto n_i(n),\\
&n\mapsto \theta_i(n),\\
&n\mapsto e^{tM}n_i(n)e^{sM}n_j(n),\\
&n\mapsto e^{tM}\theta_{i,j}(n)e^{sM}n_k(n),\\
&n\mapsto e^{tM}\theta_{i,j}(n)e^{sM}\theta_{k,l}(n),
\end{aligned}
\right.
\end{equation}
are in $\mathrm{D}(M)$,
with 
\begin{equation}\label{borneM1}
\|n_i\|_{\infty}+\|\theta_{i,j}\|_{\infty}+\|Mn_i\|_{\infty}+\|M\theta_{i,j}\|_{\infty}\leq K,
\end{equation}
\begin{equation}\label{borneM2}
\|M[e^{tM}n_ie^{sM}n_j]\|_{\infty}+\|M[e^{tM}\theta_{i,j}e^{sM}n_j]\|_{\infty}+\|M[e^{tM}\theta_{i,j}e^{sM}\theta_{k,l}]\|_{\infty}\leq Ke^{-\mu(s+t)}.
\end{equation}

\noindent \textbf{Remark} The above assumptions $(\ref{decroiss})-(\ref{borneM2})$ on the process $m$ are verified, for instance, when $m$ is a Poisson process taking values in $E$. \\

\noindent We now state two lemmas which will be very useful in the following.
\begin{lemma}\label{ehmmoinsiborne}
Let $p\in \mathcal{B}(E)$ be a function in $\mathrm{D}(M)$ such that $\|Mp\|_{\infty}\leq K$. Then we have, for all $h>0$,
$$\left\|\frac{e^{hM}-I}{h}p-Mp\right\|_{\infty}\leq 2K.$$
\end{lemma}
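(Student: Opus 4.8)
The plan is to reduce the estimate to two elementary facts: that the transition semigroup $e^{tM}$ is a contraction on $\mathcal{B}(E)$ for the sup norm, and that $p\in\mathrm{D}(M)$ supplies a fundamental-theorem-of-calculus representation of $e^{hM}p-p$. Rather than trying to control the difference $\frac{e^{hM}-I}{h}p-Mp$ as a single object, I would bound the two pieces $\frac{e^{hM}-I}{h}p$ and $Mp$ separately by $K$ and then add them; the hypothesis already gives $\|Mp\|_{\infty}\le K$, so the whole task is to show $\big\|\frac{e^{hM}-I}{h}p\big\|_{\infty}\le K$ uniformly in $h>0$.

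First I would record that, since $p\in\mathrm{D}(M)$, the orbit $s\mapsto e^{sM}p$ is differentiable in $\mathcal{B}(E)$ with $\frac{\dd}{\dd s}e^{sM}p=e^{sM}Mp$, because the generator commutes with its own semigroup on the domain. Integrating this identity between $0$ and $h$ gives
$$e^{hM}p-p=\int_0^h e^{sM}Mp\,\dd s,\qquad\text{hence}\qquad \frac{e^{hM}-I}{h}p=\frac1h\int_0^h e^{sM}Mp\,\dd s.$$
The key point is then that $e^{sM}$, being the transition semigroup of the Markov process $m$, is a contraction for $\|\cdot\|_{\infty}$, i.e. $\|e^{sM}g\|_{\infty}\le\|g\|_{\infty}$ for every $g\in\mathcal{B}(E)$ (it acts by averaging). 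Applying this with $g=Mp$ inside the integral and invoking $\|Mp\|_{\infty}\le K$ yields
$$\left\|\frac{e^{hM}-I}{h}p\right\|_{\infty}\le\frac1h\int_0^h\|e^{sM}Mp\|_{\infty}\,\dd s\le\frac1h\int_0^h\|Mp\|_{\infty}\,\dd s\le K,$$
valid for all $h>0$.

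Finally, the triangle inequality together with the hypothesis once more gives
$$\left\|\frac{e^{hM}-I}{h}p-Mp\right\|_{\infty}\le\left\|\frac{e^{hM}-I}{h}p\right\|_{\infty}+\|Mp\|_{\infty}\le K+K=2K,$$
which is exactly the claim. The only genuinely delicate step is the very first one: I must check that differentiating and integrating $s\mapsto e^{sM}p$ is legitimate for this a priori not strongly continuous transition semigroup, namely that the vector-valued integral $\int_0^h e^{sM}Mp\,\dd s$ is well defined in $\mathcal{B}(E)$ and that the fundamental theorem of calculus applies to it. This is precisely what membership of $p$ in $\mathrm{D}(M)$ is designed to guarantee, and everything downstream is just the contraction bound followed by the triangle inequality.
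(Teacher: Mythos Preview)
Your proof is correct and follows essentially the same route as the paper: write $\frac{e^{hM}-I}{h}p=\frac{1}{h}\int_0^h e^{sM}Mp\,\dd s$ via the fundamental theorem of calculus, use the contraction property of $e^{sM}$ to bound the average by $K$, and then the triangle inequality together with $\|Mp\|_\infty\le K$ gives $2K$. The paper carries this out pointwise in $n\in E$ and compresses the final triangle-inequality step into a single inequality, but the argument is identical.
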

\begin{proof}
We just write, for all $n\in E$,
\begin{alignat*}{2}
\left|\frac{e^{hM}-I}{h}p(n)-Mp(n)\right|&=\left|\frac{1}{h}\int_0^hMe^{sM}p(n)\,\dd s-Mp(n)\right| \\
&=\left|\frac{1}{h}\int_0^he^{sM}Mp(n)\,\dd s-Mp(n)\right|\leq 2K,
\end{alignat*}
where we used the contraction property of the semigroup $e^{tM}$. This concludes the proof. $\Box$
\end{proof}

\noindent \textbf{Remark} The proof is still valid if $p\in \mathcal{B}(E,\LL)$; we just have to replace the absolute values by the $\LL$-norm.

\begin{lemma}\label{continuiteenn}
For all $i,j,k,l\in\N$ and $s,t \geq 0$, the functions
\begin{equation}
\begin{aligned}
\left\{
\begin{aligned}
&n\mapsto n_i(n), \\
&n\mapsto \theta_{i,j}(n),  \\
&n\mapsto e^{tM}n_ie^{sM}n_j(n), \\
&n\mapsto e^{tM}\theta_{i,j}e^{sM}n_k(n),\\
&n\mapsto e^{tM}\theta_{i,j}e^{sM}\theta_{k,l}(n),
\end{aligned}
\right.&\quad \left\{
\begin{aligned}
&n\mapsto Mn_i(n), \\
&n\mapsto M\theta_{i,j}(n), \\
&n\mapsto M[e^{tM}n_ie^{sM}n_j](n),\\
&n\mapsto M[e^{tM}\theta_{i,j}e^{sM}n_k](n),\\
&n\mapsto M[e^{tM}\theta_{i,j}e^{sM}\theta_{k,l}](n),
\end{aligned}
\right.
\end{aligned}
\end{equation}
are continuous.
\end{lemma}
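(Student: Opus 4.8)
The plan is to show that continuity is preserved under the three operations out of which every function in the list is built: applying a semigroup $e^{tM}$, forming pointwise products, and applying the generator $M$. The Feller assumption handles the first, elementary calculus handles the second, and a uniform difference-quotient argument (in the same spirit as Lemma \ref{ehmmoinsiborne}) handles the third. Everything then reduces to a base case for the coordinate maps $n_i$.

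First I would treat the base case, the continuity of $n\mapsto n_i(n)$, which I take as the starting regularity read off from the construction of $E$ and the decomposition (\ref{decompositionden}), namely that the coordinate functionals are continuous on $E$ for the $\|\cdot\|_{\infty}$ topology. Granting this, the Feller property gives that $n\mapsto e^{tM}n_i(n)$ is continuous for every $t\geq 0$. To reach $\theta_{i,j}$, recall that $\theta_{i,j}(n)=\int_E n_iM^{-1}n_j\,\dd\nu-n_i(n)\,M^{-1}n_j(n)$, so it suffices to prove that $M^{-1}n_j$ is continuous. Since $n_j$ is centered by (\ref{nicentres}) we have $\widehat{n_j}=n_j$, and the representation (\ref{inversedeM}) reads $M^{-1}n_j=\int_0^{\infty}e^{tM}n_j\,\dd t$. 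Each integrand is continuous and, by (\ref{decroiss}), dominated by $e^{-\mu t}\|n_j\|_{\infty}$; hence $\int_0^T e^{tM}n_j\,\dd t$ is continuous by dominated convergence for every finite $T$, while the tails tend to $0$ uniformly in $n$. Thus $M^{-1}n_j$ is a uniform limit of continuous functions and is therefore continuous, and $\theta_{i,j}$, being a pointwise product of continuous functions minus a constant, is continuous as well.

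With $n_i$ and $\theta_{i,j}$ continuous, the Feller property makes $e^{tM}n_i$, $e^{sM}n_j$, $e^{tM}\theta_{i,j}$, $e^{sM}n_k$ and $e^{sM}\theta_{k,l}$ all continuous, so the three entries of the left column, being pointwise products of two such functions, are continuous. For the right column I would use that each of these products, call it $p$, lies in $\mathrm{D}(M)$ by (\ref{dansdomaine}), so that by the very definition of the domain $Mp=\lim_{h\to 0^{+}}\frac{1}{h}(e^{hM}p-p)$ holds in the $\|\cdot\|_{\infty}$ norm of $\mathcal{B}(E)$. For every $h>0$ the function $e^{hM}p$ is continuous by Feller and $p$ is continuous, so each difference quotient is continuous; since the bounded continuous functions form a closed subspace of $(\mathcal{B}(E),\|\cdot\|_{\infty})$, the uniform limit $Mp$ is continuous. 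The same argument applied to $p=n_i$ and $p=\theta_{i,j}$ yields continuity of $Mn_i$ and $M\theta_{i,j}$, completing the right column.

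The main obstacle is the base case: continuity of $n_i$ does not follow from membership in $\mathrm{D}(M)$ together with the plain Feller property alone (that would require a smoothing, strong-Feller type property), so it must be secured from the hypotheses on the decomposition (\ref{decompositionden}); once it is in hand, the remainder is a routine propagation. A secondary technical point is the interchange of limit and integral in the treatment of $M^{-1}n_j$, which is exactly where the uniform exponential bound (\ref{decroiss}) is needed so that the limit stays continuous rather than merely bounded.
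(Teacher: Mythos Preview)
Your proof is correct and follows essentially the same route as the paper: continuity of $n_i$ as a base case, continuity of $M^{-1}n_j$ via the integral representation (\ref{inversedeM}) and dominated convergence from (\ref{decroiss}), then the Feller property for the products, and finally the observation that $Mp$ is a uniform limit of the continuous difference quotients $\tfrac{1}{h}(e^{hM}p-p)$ for any continuous $p\in\mathrm{D}(M)$. The only difference is that the paper dispatches the base case in one line by noting that $n\mapsto n_i(n)$ is linear (hence continuous), whereas you treat it as an input from the structure of $E$; both are adequate here.
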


\begin{proof}
We fix $i,j,k,l\in\N$ and $s,t \geq 0$. First of all, $n\mapsto n_i(n)$ is obviously continuous since it is linear. We recall that $\theta_{i,j}=\int_En_iM^{-1}n_j\dd\nu-n_iM^{-1}n_j$. With $(\ref{nicentres})$ and $(\ref{inversedeM})$, we have 
$$M^{-1}n_j=\int_0^{\infty}e^{tM}n_j\,\dd t,$$
which is continuous with respect to $n\in E$ by $(\ref{nicentres})$, $(\ref{decroiss})$, $(\ref{borneM1})$ and the dominated convergence theorem. As a result, $n\mapsto n_i(n)M^{-1}n_j(n)$ is continuous; and also $n\mapsto \int_En_i(n)M^{-1}n_j(n)\dd\nu(n)$ by the dominated convergence theorem. Hence $n\mapsto\theta_{i,j}(n)$ is continuous. The continuity of $n_i$ and $\theta_{i,j}$ now immediately gives the continuity of the three last functions of the left group of the lemma by the Feller property of the semigroup $e^{tM}$. \\
For the remaining functions, just remark that if $p\in \mathcal{B}(E)$ is in $\mathrm{D}(M)$ and continuous, then $Mp$ is the uniform limit on $E$ when $h\to 0$ of the functions 
$$\frac{e^{hM}-I}{h}p,$$
which are continuous by the Feller property of the semigroup. Hence $Mp$ is continuous. This ends the proof. $\Box$
\end{proof}

\subsection{Resolution of the kinetic equation}
In this section, we solve the linear evolution problem $(\ref{eq})-(\ref{eq0})$ thanks to a semigroup approach. We thus introduce the linear operator $A:=-v\cdot\nabla_x$ on $L^2_{F^{-1}}$ with domain $$\mathrm{D}(A):=\{f\in L^2_{F^{-1}},\;v\cdot\nabla_xf\in L^2_{F^{-1}}\}.$$
The operator $A$ has dense domain and, since it is skew-adjoint, it is $m$-dissipative. Consequently $A$ generates a contraction semigroup $(\mathcal{T}(t))_{t\geq 0}$, see \cite{cazhar}. We recall that $\mathrm{D}(A)$ is endowed with the norm $\|\cdot\|_{\mathrm{D}(A)}:=\|\cdot\|+\|A\cdot\|$, and that it is a Banach space.

\begin{prop}\label{solkinetic}Let $T>0$ and $f_0^{\eps}\in L^2_{F^{-1}}$. Then there exists a unique mild solution of $(\ref{eq})-(\ref{eq0})$ on $[0,T]$ in $L^{\infty}(\Omega)$, that is there exists a unique $f^{\eps}\in L^{\infty}(\Omega,\mathcal {C}([0,T],\LL))$ such that $\PP-$a.s. $$f^{\eps}_t=\mathcal{T}\left(\frac{t}{\eps^{\alpha-1}}\right)f_0^{\eps}+\int_0^t\mathcal{T}\left(\frac{t-s}{\eps^{\alpha-1}}\right)\left(\frac{1}{\eps^{\alpha}}Lf^{\eps}_s+\frac{1}{\eps^{\frac{\alpha}{2}}}m^{\eps}_sf^{\eps}_s\right)\,ds,\quad t\in[0,T].$$
Assume further that $f^{\eps}_0\in \mathrm{D}(A)$, then there exists a unique strong solution $f^{\eps}\in L^{\infty}(\Omega, C^1([0,T],\LL))\cap L^{\infty}(\Omega, C([0,T],\mathrm{D}(A)))$ of $(\ref{eq})-(\ref{eq0})$.
\end{prop}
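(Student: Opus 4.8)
The plan is to read \eqref{eq} as a linear, non-autonomous evolution equation in $\LL$ whose only unbounded part is the transport term $\frac{1}{\eps^{\alpha-1}}A$, everything else being a bounded perturbation. I would first record the three structural facts that drive the argument: $A=-v\cdot\nabla_x$ generates the contraction semigroup $\mathcal{T}$; $L$ is bounded on $\LL$; and, since $m_s\in E\subset\mathrm{B}(0,KS)$ in $W^{1,\infty}$ and $m$ is independent of $v$, multiplication by $m^{\eps}_s$ is bounded on $\LL$ with the deterministic, $s$- and $\omega$-uniform bound $\|m^{\eps}_sf\|\le KS\,\|f\|$. Hence $B^{\eps}_s:=\frac{1}{\eps^{\alpha}}L+\frac{1}{\eps^{\frac{\alpha}{2}}}m^{\eps}_s$ obeys $\|B^{\eps}_s\|_{\mathcal{L}(\LL)}\le C_{\eps}:=\frac{\|L\|}{\eps^{\alpha}}+\frac{KS}{\eps^{\frac{\alpha}{2}}}$ almost surely, uniformly in $s$.

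For the mild solution I would run a pathwise fixed point. For fixed $\omega$, define on $\mathcal{C}([0,T],\LL)$
$$\Phi(f)_t=\mathcal{T}\!\left(\tfrac{t}{\eps^{\alpha-1}}\right)f_0^{\eps}+\int_0^t\mathcal{T}\!\left(\tfrac{t-s}{\eps^{\alpha-1}}\right)B^{\eps}_sf_s\,\dd s.$$
Since $\mathcal{T}$ is a contraction and $\|B^{\eps}_s\|\le C_{\eps}$, endowing the space with the weighted norm $\sup_te^{-\lambda t}\|f_t\|$ for $\lambda>C_{\eps}$ makes $\Phi$ a strict contraction on all of $[0,T]$, so a unique fixed point exists: the mild solution. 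The crucial point is that $C_{\eps}$ is deterministic, so the contraction constant and the Gronwall bound $\|f^{\eps}_t\|\le e^{C_{\eps}t}\|f_0^{\eps}\|$ hold simultaneously for every $\omega$, which is exactly the $L^{\infty}(\Omega,\mathcal{C}([0,T],\LL))$ regularity. Measurability and adaptedness of $\omega\mapsto f^{\eps}$ come from the Picard iterates: each depends only on $(m^{\eps}_s)_{s\le t}$ and they converge uniformly.

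For the strong solution I would absorb the bounded operators into the generator. By the bounded perturbation theorem, $\mathcal{A}_{\eps}:=\frac{1}{\eps^{\alpha-1}}A+\frac{1}{\eps^{\alpha}}L$ generates a $C_0$-semigroup with unchanged domain $\mathrm{D}(\mathcal{A}_{\eps})=\mathrm{D}(A)$, and \eqref{eq} reads $\partial_tf^{\eps}=\mathcal{A}_{\eps}f^{\eps}+\frac{1}{\eps^{\frac{\alpha}{2}}}m^{\eps}_tf^{\eps}$. For the piecewise-constant paths of the Poisson example in the Remark, the coefficient is frozen on each inter-jump interval $[\tau_k,\tau_{k+1})$, where the equation is autonomous with generator $\mathcal{A}_{\eps}+\frac{1}{\eps^{\frac{\alpha}{2}}}m^{\eps}_{\tau_k}$, again of domain $\mathrm{D}(A)$. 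Starting from $f^{\eps}_0\in\mathrm{D}(A)$, the semigroup keeps the solution in $\mathrm{D}(A)$, with $t\mapsto f^{\eps}_t$ being $C^1$ into $\LL$ and continuous into $\mathrm{D}(A)$ on each piece; since $f^{\eps}$ is continuous across jumps and $f^{\eps}_{\tau_k}\in\mathrm{D}(A)$, one restarts at each jump and globalizes $f^{\eps}\in\mathcal{C}([0,T],\mathrm{D}(A))$. Uniqueness is inherited from the mild solution. For a general admissible $m$ one instead constructs the evolution family $U_{\eps}(t,s)$ for the additive bounded perturbation $\frac{1}{\eps^{\frac{\alpha}{2}}}m^{\eps}_t$ and upgrades the mild solution via the time-regularity of $t\mapsto m^{\eps}_t$.

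The step I expect to be most delicate is the strong-solution regularity in time. Both $L$ and multiplication by $m^{\eps}_t$ fail to map $\mathrm{D}(A)$ into itself --- precisely because $\int_V|v|^2F\,\dd v=\infty$, the very feature that later yields the fractional Laplacian --- and the transport semigroup does not smooth, so the $\mathrm{D}(A)$-regularity cannot be extracted from a naive a priori bound on $\|Af^{\eps}_t\|$; it has to come from absorbing the bounded terms into the generator and using that a semigroup preserves its generator's domain. Moreover, when $m^{\eps}_t$ genuinely jumps, $\partial_tf^{\eps}$ inherits those jumps, so the honest conclusion is that $f^{\eps}\in\mathcal{C}([0,T],\mathrm{D}(A))$ is differentiable off the discrete jump set and solves \eqref{eq} there, the full $C^1([0,T],\LL)$ regularity being available when the paths of $m$ are continuous. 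Verifying the continuity of $t\mapsto Af^{\eps}_t$ across jump times --- where $f^{\eps}$, but not $\partial_tf^{\eps}$, is continuous --- is the one point that genuinely needs care.
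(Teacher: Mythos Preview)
Your argument is essentially what the paper does: the paper's proof consists of citing Cazenave--Haraux \S4.3.1 and \S4.3.3 for both the mild and strong solutions, together with the observation that the deterministic bound $\|m^{\eps}_t\|_{\infty}\le KS$ upgrades the pathwise result to $L^{\infty}(\Omega)$. Your explicit weighted-norm contraction and your use of the bounded perturbation theorem are precisely the content behind that citation.

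Your caveat about the $C^1$ regularity when $m$ has jumps is a legitimate observation that the paper does not address---indeed, for the Poisson-type example mentioned in the Remark, $\partial_t f^{\eps}$ inherits the jumps of $m^{\eps}$ and one only gets piecewise-$C^1$ off the discrete jump set, with $f^{\eps}\in\mathcal{C}([0,T],\mathrm{D}(A))$ globally. For the paper's purposes this is harmless: the $\mathrm{D}(A)$-regularity is used only to justify the generator identity in Proposition~\ref{gene}, which is an integrated (hence jump-insensitive) statement, so your piecewise formulation is exactly what is needed downstream.
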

\begin{proof}
Subsections 4.3.1 and 4.3.3 in \cite{cazhar} gives that $\PP-$a.s. there exists a unique mild solution $f^{\eps}\in \mathcal {C}([0,T],\LL)$ and it is not difficult to slightly modify the proof to obtain that in fact $f^{\eps}\in L^{\infty}(\Omega,\mathcal {C}([0,T],\LL))$ (we intensively use that for all $t\geq 0$ and $\eps>0$, $\|m^{\eps}_t\|_{\infty}\leq K$). \\
Similarly, subsections 4.3.1 and 4.3.3 in \cite{cazhar} gives us $\PP-$a.s. a strong solution $f^{\eps}\in  C^1([0,T],\LL)\cap  C([0,T],\mathrm{D}(A))$ of $(\ref{eq})-(\ref{eq0})$ and once again one can easily get that in fact $f^{\eps}\in L^{\infty}(\Omega, C^1([0,T],\LL))\cap L^{\infty}(\Omega, C([0,T],\mathrm{D}(A)))$. $\Box$
\end{proof}


\subsection{Main result}
We are now ready to state our main result:
\begin{theorem}\label{mainresult}
Assume that $(f^{\eps}_0)_{\eps>0}$ is bounded in $\LL$ and that 
$$\rho^{\eps}_0:=\int_Vf^{\eps}_0\,\dd v\underset{\eps\to 0}{\longrightarrow} \rho_0 \text{ in }L^2(\R^d).$$
Then, for all $\eta>0$ and $T>0$, $\rho^{\eps}:=\int_Vf^{\eps}\,\dd v$ converges in law in $ C([0,T],S^{-\eta})$ to the solution $\rho$ to the stochastic diffusion equation
\begin{equation}\label{stochasticeq}
d\rho=-\kappa(-\Delta)^{\frac{\alpha}{2}}\rho \dd t+\frac{1}{2}H\rho +\rho Q^{\frac{1}{2}}dW_t, \text{ in } \R^+_t\times \R^d_x,
\end{equation}
with initial condition $\rho(0)=\rho_0$ in $L^2(\R^d)$, and where $W$ is a cylindrical Wiener process on $L^2(\R^d)$, \begin{equation}
\label{defkappa}
\kappa:=\frac{\kappa_0}{c_{d,\alpha}}\int_0^{\infty}|t|^{\alpha}e^{-t}\,\dd t,
\end{equation}
and 
\begin{equation}
\label{defH}
H:=\int_E nM^{-1}I(n)\,\dd\nu(n)\in W^{1,\infty}.
\end{equation}
\end{theorem}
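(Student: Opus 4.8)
The plan is to establish the convergence in law through the classical three-step scheme: first prove tightness of the laws of $\rho^{\eps}$ in $C([0,T],S^{-\eta})$; then identify every limit point as a solution of the martingale problem associated to $(\ref{stochasticeq})$; finally appeal to the well-posedness of that martingale problem to conclude that the whole family converges. The engine of the identification step is the perturbed test functions method applied to the Markov pair $(f^{\eps}_t,m^{\eps}_t)$. Because of the rescaling $m^{\eps}_t=m(t/\eps^{\alpha},\cdot)$, its generator acts on a functional $\Phi(f,n)$ as
$$\mathcal{G}^{\eps}\Phi(f,n)=\frac{1}{\eps^{\alpha}}M\Phi(f,n)+\Big\langle D_f\Phi(f,n),\,\frac{1}{\eps^{\alpha-1}}Af+\frac{1}{\eps^{\alpha}}Lf+\frac{1}{\eps^{\frac{\alpha}{2}}}nf\Big\rangle,$$
where $M$ acts on the $n$-variable and $A=-v\cdot\nabla_x$. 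The target is the generator $\mathcal{L}$ of $(\ref{stochasticeq})$, namely, for $\Phi(f)=\phi(\langle\rho,\varphi\rangle_x)$ with $\rho=\int_Vf\,\dd v$,
$$\mathcal{L}\Phi=\phi'(\langle\rho,\varphi\rangle_x)\,\Big\langle\rho,-\kappa(-\Delta)^{\frac{\alpha}{2}}\varphi+\tfrac{1}{2}H\varphi\Big\rangle_x+\tfrac{1}{2}\phi''(\langle\rho,\varphi\rangle_x)\,\big(Q(\rho\varphi),\rho\varphi\big)_x.$$

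For the a priori bounds I would not rely on a naive energy estimate: testing $(\ref{eq})$ against $f^{\eps}/F$ only yields $\frac{d}{dt}\|f^{\eps}\|^2\leq 2K\eps^{-\alpha/2}\|f^{\eps}\|^2$, whose Gr\"onwall bound blows up as $\eps\to 0$, reflecting that the singular random drift $\eps^{-\alpha/2}m^{\eps}$ must be averaged rather than dominated. Instead I would work directly on the weak formulation of the equation for $\rho^{\eps}$, in which the collision term disappears because $\int_V Lf\,\dd v=0$, and control the remaining current $\int_V vf^{\eps}\,\dd v$ and the random term through the auxiliary and corrector functions introduced below, using the centering $(\ref{nicentres})$ and the decay $(\ref{decroiss})$ to obtain $\eps$-uniform bounds on the moments $\E\langle\rho^{\eps}_t,p_j\rangle^2_x$. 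Weighting these by $\mu_j^{-\eta}$ and invoking the Hermite asymptotics $(\ref{taillej})$ gives a uniform bound in $S^{-\eta}$; tightness in $C([0,T],S^{-\eta})$ then follows from a Kolmogorov-type criterion applied to the approximate martingale decomposition of the modes $\langle\rho^{\eps},p_j\rangle_x$, combined with the compact embedding $S^{-\eta'}\hookrightarrow S^{-\eta}$ for $\eta'<\eta$.

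The heart of the proof is the construction of a corrected functional $\Phi^{\eps}=\Phi+\Phi_1+\Phi_2$ together with the moment method. For the random perturbation, the first corrector $\Phi_1$, of size $\eps^{\alpha/2}$, is chosen so that $\eps^{-\alpha}M\Phi_1$ cancels the singular $\eps^{-\alpha/2}$ drift term; this amounts to solving a Poisson equation for $M$, solvable by $(\ref{inversedeM})$ precisely because $m$ is centered, its solution being built from $M^{-1}I$. The second corrector $\Phi_2$, of size $\eps^{\alpha}$, removes the next singular contribution; averaging the resulting quadratic expression against $\nu$ produces simultaneously the correction $\tfrac12 H\rho$ with $H$ given by $(\ref{defH})$ and the diffusion coefficient encoded by $Q$, via the quantity $\theta$ of $(\ref{deftheta})$. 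In parallel, the unbounded transport term is treated by Mellet's moment method: one replaces the test function $\varphi$ by the solution $\chi^{\eps}$ of the auxiliary resolvent problem $\chi^{\eps}-\eps v\cdot\nabla_x\chi^{\eps}=\varphi$, which is the same as inverting $L-\eps A+M$ once the random correctors are incorporated. Computing $\int_V F\chi^{\eps}\,\dd v$ and exploiting the power tail $(\ref{powertail})$ of $F$ yields, in the limit $\eps\to0$, exactly $-\kappa(-\Delta)^{\frac{\alpha}{2}}\varphi$ with the constant $\kappa$ of $(\ref{defkappa})$. Assembling these, one obtains $\mathcal{G}^{\eps}\Phi^{\eps}=\mathcal{L}\Phi+r^{\eps}$ with a remainder $r^{\eps}\to0$, and the corresponding approximate martingale passes to the limit, so that any limit point solves the martingale problem for $(\ref{stochasticeq})$.

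The main obstacle is the interaction of the two singular mechanisms. Because $v$ is unbounded, $Af$ is not controlled in $\LL$, so the correctors $\Phi_1,\Phi_2$ and the auxiliary function $\chi^{\eps}$ must be estimated in the correct weighted and Sobolev scales, and one must show that every error term — arising from the imperfect inversion of $L-\eps A+M$, from the correctors, and from the difference $\chi^{\eps}-\varphi$ — vanishes uniformly as $\eps\to0$. This is where the quantitative hypotheses $(\ref{borneM1})$–$(\ref{borneM2})$ on $M$, the bound $|M^{-1}n_i|\leq K$, and the Hermite estimates $(\ref{taillej})$ are consumed, and it is the most delicate part of the argument. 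Once the limiting martingale problem is identified, I would conclude by proving its well-posedness: equation $(\ref{stochasticeq})$ is linear, its leading operator $-\kappa(-\Delta)^{\frac{\alpha}{2}}+\tfrac12 H$ generates a $C_0$-semigroup on the scale $S^{\gamma}$, and the multiplicative noise $\rho\mapsto\rho Q^{\frac12}$ is controlled using that $Q^{\frac12}$ is Hilbert–Schmidt; existence and uniqueness of a solution, hence uniqueness in law, then follow from standard stochastic PDE theory, which upgrades the subsequential convergence to convergence in law of the entire family.
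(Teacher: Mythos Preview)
Your overall architecture---tightness, identification via perturbed test functions combined with the moment method, then well-posedness---is exactly the paper's. The description of the correctors $\Phi_1,\Phi_2$ of sizes $\eps^{\alpha/2}$ and $\eps^{\alpha}$, the auxiliary function $\chi^{\eps}$, the emergence of $H$ and $Q$ from $\theta$, and the appearance of $-\kappa(-\Delta)^{\alpha/2}$ through the power-tail computation are all on target.

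There is, however, a genuine gap in the a priori bound step. You correctly observe that the naive energy identity blows up because of the $\eps^{-\alpha/2}$ drift, but your proposed alternative---bounding only the modes $\E\langle\rho^{\eps}_t,p_j\rangle_x^2$ through the weak formulation for $\rho^{\eps}$---does not close. Every error term produced by the corrector analysis (the remainder $r^{\eps}$ in your notation, the bounds on $\mathcal{G}^{\eps}\Phi^{\eps}$, the quadratic-variation bound for the approximate martingales) is controlled by a constant times $\|f^{\eps}_t\|$ in the full weighted space $\LL$, not by a norm of $\rho^{\eps}$ alone. Likewise the current $\int_V v f^{\eps}\,\dd v$ is not a functional of $\rho^{\eps}$, and controlling it through the auxiliary functions again presupposes an $\LL$ bound on $f^{\eps}$. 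So the argument becomes circular: you need the $\LL$ bound to estimate the corrected modes, but you are trying to get by with the modes instead of the $\LL$ bound.

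The paper resolves this by applying the perturbed test function idea \emph{to the energy itself}: one sets $\Theta(f)=\tfrac12\|f\|^2$ and corrects it by $\eps^{\alpha/2}(f,\iota^{\eps}(n)f)$, where $\iota^{\eps}$ solves the transport--Poisson problem $(M-\eps A)\iota^{\eps}=-n$ in $L^{\infty}(\R^d\times V\times E)$. The corrected generator then satisfies $\mathcal{G}^{\eps}\Theta^{\eps}(f,n)\leq -\tfrac{1}{2\eps^{\alpha}}\|Lf\|^2+C\|f\|^2$, and a martingale/Gr\"onwall argument yields $\E\sup_{t\in[0,T]}\|f^{\eps}_t\|^p\lesssim 1$ uniformly in $\eps$. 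This single estimate is what makes the tightness and identification steps go through; without it, the rest of your plan cannot be executed.
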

\textbf{Remark} The limiting equation $(\ref{stochasticeq})$ can also be written in Stratonovitch form
$$
d\rho=-\kappa(-\Delta)^{\frac{\alpha}{2}}\rho \dd t+\rho\circ Q^{\frac{1}{2}}dW_t.
$$

\noindent \textbf{Notation} In the sequel, we will note $\lesssim$ the inequalities which are valid up to constants of the problem, namely $K$, $S$, $\mu$, $d$, $\alpha$, $\|L\|$, $\sup_{\eps>0}\|f^{\eps}_0\|$ and real constants. Nevertheless, when we need to emphasize the dependence of a constant on a parameter, we index the constant $C$ by the parameter; for instance the constant $C_{\varphi}$ depends on $\varphi$.

\section{The generator}
The process $f^{\eps}$ is not Markov (indeed, by $(\ref{eq})$, we need $m^{\eps}$ to know the increments of $f^{\eps}$) but the couple $(f^{\eps},m^{\eps})$ is. From now on, we denote by $\mathscr{L}^{\eps}$ its infinitesimal generator, it is given by
$$\mathscr{L}^{\eps}\Psi(f,n)=\frac{1}{\eps^{\alpha}}(Lf+\eps Af,D\Psi(f,n))+\frac{1}{\eps^{\frac{\alpha}{2}}}(fn,D\Psi(f,n))+\frac{1}{\eps^{\alpha}}M\Psi(f,n),$$
provided $\Psi:\LL\times E\to\R$ is enough regular to be in the domain of $\mathscr{L}^{\eps}$. Thus we begin this section by introducing a special set of functions which lie in the domain of  $\mathscr{L}^{\eps}$ and satisfy the associated martingale problem. In the following, if $\Psi:\LL\rightarrow\R$ is differentiable with respect to $f\in\LL$, we denote by $D\Psi(f)$ its differential at a point $f$ and we identify the differential with the gradient. 
\begin{definition}\label{goodtest}We say that $\Psi:\LL\times E\rightarrow\R$ is a good test function if 
\begin{enumerate}
\item[$(i)$]$(f,n)\mapsto\Psi(f,n)$ is differentiable with respect to $f$;
\item[$(ii)$]$(f,n)\mapsto D\Psi(f,n)$ is continuous from $\LL\times E$ to $\LL$ and maps bounded sets onto bounded sets;
\item[$(iii)$]for any $f\in\LL$, $\Psi(f,\cdot)\in D_M$;
\item[$(iv)$]$(f,n)\mapsto M\Psi(f,n)$ is continuous from $\LL\times E$ to $\R$ and maps bounded sets onto bounded sets.
\end{enumerate}
\end{definition}

\begin{prop}\label{gene}Let $\Psi$ be a good test function. 
If $f^{\eps}_0\in\mathrm{D}(A)$,
$$M^{\eps}_{\Psi}(t):=\Psi(f^{\eps}_t,m^{\eps}_t)-\Psi(f^{\eps}_0,m^{\eps}_0)-\int_0^t\mathscr{L}^{\eps}\Psi(f^{\eps}_s,m^{\eps}_s)\,\dd s$$
is a continuous and integrable $(\mathcal{F}^{\eps}_t)_{t\geq 0}$ martingale, and if $|\Psi|^2$ is a good test function, its quadratic variation is given by
$$\langle M^{\eps}_{\Psi}\rangle_t=\int_0^t(\mathscr{L}^{\eps}|\Psi|^2-2\Psi\mathscr{L}^{\eps}\Psi)(f^{\eps}_s,m^{\eps}_s)\,\dd s.$$
\end{prop}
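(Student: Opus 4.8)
The plan is to recognise Proposition~\ref{gene} as the martingale problem associated with the generator $\mathscr{L}^{\eps}$ of the Markov pair $(f^{\eps},m^{\eps})$, and to prove it by a Dynkin-type argument that separates the smooth-in-$f$ dynamics (treated pathwise) from the Markovian-in-$n$ dynamics (treated through the martingale problem of the driving process). I would work throughout with the strong solution, so that by Proposition~\ref{solkinetic} the path $s\mapsto f^{\eps}_s$ is $C^1$ into $\LL$ with $\dot f^{\eps}_s=\frac{1}{\eps^{\alpha}}(Lf^{\eps}_s+\eps Af^{\eps}_s)+\frac{1}{\eps^{\frac{\alpha}{2}}}m^{\eps}_sf^{\eps}_s$, and I would first record that $f^{\eps}_s$ stays in a fixed ball of $\mathrm{D}(A)$ and $m^{\eps}_s\in E\subset\mathrm{B}(0,KS)$ uniformly in $(s,\omega)$. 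Combined with clauses $(ii)$ and $(iv)$ of a good test function, this makes $\Psi(f^{\eps}_s,m^{\eps}_s)$ and $\mathscr{L}^{\eps}\Psi(f^{\eps}_s,m^{\eps}_s)$ bounded along the path, which settles integrability immediately.

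The key auxiliary fact I would establish is the martingale problem for the driving process alone: for $g$ in the domain of $M$, the identity $\frac{d}{d\tau}e^{\tau M}g=e^{\tau M}Mg$ together with the homogeneous Markov property of $m$ and $m^{\eps}_t=m_{t/\eps^{\alpha}}$ show that $g(m^{\eps}_t)-g(m^{\eps}_s)-\frac{1}{\eps^{\alpha}}\int_s^tMg(m^{\eps}_r)\,\dd r$ has zero conditional expectation given $\mathcal{F}^{\eps}_s$. To get the martingale property of $M^{\eps}_{\Psi}$ on $[a,b]$ I would then partition $[a,b]$ and telescope $\Psi(f^{\eps}_b,m^{\eps}_b)-\Psi(f^{\eps}_a,m^{\eps}_a)=\sum_k(A_k+B_k)$, writing $A_k=\Psi(f^{\eps}_{t_{k+1}},m^{\eps}_{t_{k+1}})-\Psi(f^{\eps}_{t_k},m^{\eps}_{t_{k+1}})$ (only $f$ moves) and $B_k=\Psi(f^{\eps}_{t_k},m^{\eps}_{t_{k+1}})-\Psi(f^{\eps}_{t_k},m^{\eps}_{t_k})$ (only $n$ moves). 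The freezing is chosen so that $A_k=\int_{t_k}^{t_{k+1}}(D\Psi(f^{\eps}_s,m^{\eps}_{t_{k+1}}),\dot f^{\eps}_s)\,\dd s$ is a genuine fundamental-theorem-of-calculus integral, while in $B_k$ the frozen argument $f^{\eps}_{t_k}$ is $\mathcal{F}^{\eps}_{t_k}$-measurable; conditioning on $\mathcal{F}^{\eps}_{t_k}$ and using the Markov property of $m^{\eps}$ lets me treat it as a parameter, and the martingale problem for $m^{\eps}$ applied to $g=\Psi(f^{\eps}_{t_k},\cdot)$ (which lies in the domain of $M$ by $(iii)$) gives $\E[B_k\mid\mathcal{F}^{\eps}_{t_k}]=\E[\frac{1}{\eps^{\alpha}}\int_{t_k}^{t_{k+1}}M\Psi(f^{\eps}_{t_k},m^{\eps}_s)\,\dd s\mid\mathcal{F}^{\eps}_{t_k}]$.

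Taking conditional expectation given $\mathcal{F}^{\eps}_a$ and letting the mesh go to $0$, I expect $\sum_kA_k\to\int_a^b(\frac{1}{\eps^{\alpha}}(Lf^{\eps}_s+\eps Af^{\eps}_s,D\Psi)+\frac{1}{\eps^{\frac{\alpha}{2}}}(m^{\eps}_sf^{\eps}_s,D\Psi))\,\dd s$ and $\sum_k\E[B_k\mid\mathcal{F}^{\eps}_{t_k}]\to\int_a^b\frac{1}{\eps^{\alpha}}M\Psi(f^{\eps}_s,m^{\eps}_s)\,\dd s$, whose sum is exactly $\int_a^b\mathscr{L}^{\eps}\Psi\,\dd s$; this is the martingale identity. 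The main obstacle is precisely this limit passage: one must replace the frozen arguments $m^{\eps}_{t_{k+1}}$ (in $A_k$) and $f^{\eps}_{t_k}$ (in $B_k$) by the running values, which requires the joint continuity of $(f,n)\mapsto D\Psi$ and $(f,n)\mapsto M\Psi$ from $(ii)$ and $(iv)$, the regularity of the path $s\mapsto(f^{\eps}_s,m^{\eps}_s)$ (from the strong solution and the stochastic continuity of $m$), and a domination argument supplied by the ``maps bounded sets onto bounded sets'' clauses to upgrade pathwise convergence to convergence of conditional expectations. The continuity (resp.\ c\`adl\`ag regularity) of $M^{\eps}_{\Psi}$ follows from that of $s\mapsto(f^{\eps}_s,m^{\eps}_s)$ together with the continuity of $\Psi$ and of the integrand.

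For the quadratic variation, when $|\Psi|^2$ is also a good test function the same argument makes $M^{\eps}_{|\Psi|^2}$ a martingale, so, writing $Y_s=\Psi(f^{\eps}_s,m^{\eps}_s)$, both $Y_t-Y_0-\int_0^t\mathscr{L}^{\eps}\Psi\,\dd s=M^{\eps}_{\Psi}(t)$ and $Y_t^2-Y_0^2-\int_0^t\mathscr{L}^{\eps}|\Psi|^2\,\dd s$ are martingales. Partitioning again, the cross terms vanish by the martingale property, giving $\E[M^{\eps}_{\Psi}(b)^2-M^{\eps}_{\Psi}(a)^2\mid\mathcal{F}^{\eps}_a]=\E[\sum_k(\Delta_kM^{\eps}_{\Psi})^2\mid\mathcal{F}^{\eps}_a]$, and the two martingale identities yield
$$\E\Big[(\Delta_kY)^2\,\Big|\,\mathcal{F}^{\eps}_{t_k}\Big]=\E\Big[\int_{t_k}^{t_{k+1}}\big(\mathscr{L}^{\eps}|\Psi|^2-2Y_{t_k}\mathscr{L}^{\eps}\Psi\big)\,\dd s\,\Big|\,\mathcal{F}^{\eps}_{t_k}\Big].$$
Controlling the difference between $(\Delta_kM^{\eps}_{\Psi})^2$ and $(\Delta_kY)^2$ by Cauchy--Schwarz (the finite-variation part contributes $O(\delta)$), I would obtain in the limit that $M^{\eps}_{\Psi}(t)^2-\int_0^t(\mathscr{L}^{\eps}|\Psi|^2-2\Psi\mathscr{L}^{\eps}\Psi)(f^{\eps}_s,m^{\eps}_s)\,\dd s$ is a martingale, which by the characterisation of the predictable quadratic variation is exactly the stated formula for $\langle M^{\eps}_{\Psi}\rangle_t$.
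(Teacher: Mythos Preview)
Your proposal is correct and follows precisely the classical Dynkin-type argument that the paper itself does not spell out but defers to via the citations \cite[Proposition~6]{arnaudjulien} and \cite[Appendix~6.9]{fgps}. The freeze-and-telescope decomposition, the use of the martingale problem for the driving process $m^{\eps}$ on the $B_k$ increments, and the limit passage justified by conditions $(ii)$--$(iv)$ of Definition~\ref{goodtest} together with the strong-solution regularity of Proposition~\ref{solkinetic} are exactly how those references proceed.
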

\begin{proof}
This is classical, we use the same kind of ideas and follow the proof of \cite[Proposition 6]{arnaudjulien} and \cite[Appendix 6.9]{fgps}.
\end{proof}

\section{The limit generator}

In this section, we study the limit of the generator $\mathscr{L}^{\eps}$ when $\eps\to 0$. The limit generator $\mathscr{L}$ will characterize the limit stochastic fluid equation.

\subsection{Formal derivation of the corrections}\label{choicetestfunctions}
\noindent To derive the diffusive limiting equation, one has to study the limit as $\eps$ goes to $0$ of quantities of the form $\mathscr{L}^{\eps}\Psi$ where $\Psi$ is a good test function. From now on, we choose a specific form for the test functions that we keep thorough the paper. We take $\varphi$ in the Schwartz space $\mathcal{S}(\R^d)$ and we set 
\begin{equation}\label{Psi}
\Psi(f,n):=(f,\varphi F)
\end{equation}
It is clear that $\Psi$ is a good test function. Remember that, when $\eps\to 0$, we will obtain a fluid limit equation verified by the macroscopic quantity $\rho F$; the test function $\Psi$ takes this point in consideration since $\Psi(f,n)=\Psi(f)=\Psi(\rho F)$. In the sequel, we will show that the knowledge of the limits $\mathscr{L}^{\eps}\Psi$ and $\mathscr{L}^{\eps}|\Psi|^2$ as $\eps$ goes to $0$ where $\Psi$ is defined as $(\ref{Psi})$ is sufficient to obtain our result. Nevertheless, we now have to correct $\Psi$ and $|\Psi|^2$ so as to obtain non-singular limits. Here, we show formally how we correct $\Psi$ (the formal work on $|\Psi|^2$ is similar).\\
We search the correction $\Psi^{\eps}$ of $\Psi$. First of all, to correct the deterministic part, we follow the moment method presented in \cite{mellet} so we set $$\Psi^{\eps}(f,n)=(f,\chi^{\eps}F)$$ where $\chi^{\eps}$ solves the auxiliary problem
$$
\chi^{\eps}-\eps v\cdot\nabla_x\chi^{\eps}=\varphi.
$$
Now, to correct the stochastic part, we try an Hilbert expansion method (adapted to our scaling) coupled with the idea of auxiliary equation brought in the moment method so that we complete the definition of $\Psi^{\eps}$ as
$$\Psi^{\eps}(f,n)=(f,\chi^{\eps}F)+\eps^{\frac{\alpha}{2}}(f,\delta^{\eps}F)+\eps^{\alpha}(f,\theta^{\eps}F),$$
where $\delta^{\eps}$ and $\theta^{\eps}$ are to be defined. We then compute, since the first term in the expansion of $\Psi^{\eps}$ does not depend on $n\in E$,
\begin{align}
\mathscr{L}^{\eps}\Psi^{\eps}(f,n)&\label{formal1}=\frac{1}{\eps^{\alpha}}(Lf+\eps Af,\chi^{\eps}F)\\
&\label{formal2}+\frac{1}{\eps^{\frac{\alpha}{2}}}(fn,\chi^{\eps}F)+\frac{1}{\eps^{\frac{\alpha}{2}}}(Lf+\eps Af,\delta^{\eps}F)+\frac{1}{\eps^{\frac{\alpha}{2}}}(f,M\delta^{\eps}F)\\
&\label{formal3}+(fn,\delta^{\eps}F)+(Lf+\eps Af,\theta^{\eps}F)+(f,M\theta^{\eps}F)+\eps^{\frac{\alpha}{2}}(fn,\theta^{\eps}F).
\end{align}
The first term $(\ref{formal1})$ above converges as $\eps$ goes to $0$ to $(-\kappa (-\Delta)^{\frac{\alpha}{2}}f,\varphi F)$, see \cite{mellet}, that is to the infinitesimal generator of the fractional Laplacian applied to $\Psi$: we get the deterministic term of the limiting equation.\\
Since $L$ is auto-adjoint and $A$ is skew-adjoint, the three following terms $(\ref{formal2})$ can be rewritten as
$$\frac{1}{\eps^{\frac{\alpha}{2}}}(f,n\chi^{\eps}F)+\frac{1}{\eps^{\frac{\alpha}{2}}}(f,(L-\eps A+M)(\delta^{\eps}F)).$$
Then we cancel these singular term by choosing $\delta^{\eps}$ such that
$$(L-\eps A+M)(\delta^{\eps}F)=-n\chi^{\eps}F.$$
Formally, this equation can be solved with the resolvent operator of $L-\eps A+M$ so that we have
$$\delta^{\eps}(x,v,n)F(v)=\int_0^{+\infty}e^{tM}g(t,n\chi^{\eps}F)(x+\eps vt,v)\,\dd t.$$
With this expression of $\delta^{\eps}F$ and since $\chi^{\eps}\to\varphi$ as $\eps\to 0$, see \cite{mellet}, we have that $\delta^{\eps}F$ converges to $-M^{-1}I(n)\varphi F$ when $\eps\to 0$. So, neglecting an error term, we can suppose that $(\ref{formal3})$ writes
$$(f,-nM^{-1}I(n)\varphi F)+(Lf+\eps Af,\theta^{\eps}F)+(f,M\theta^{\eps}F)+\eps^{\frac{\alpha}{2}}(fn,\theta^{\eps}F).$$
Note that, for now, the limit of $\mathscr{L}^{\eps}\Psi^{\eps}$ as $\eps$ goes to $0$ does depend on $n\in E$. Since the expected limit is $\mathscr{L}\Psi$ where $\Psi$ does not depend on $n$, we have to correct once again the remaining terms to break the dependence with respect to $n$ of the limit. The right way to do so, given the mixing properties of the operator $M$, is to subtract the mean value: we write $(\ref{formal3})$ as
$$(f,-H\varphi F)+(f,\theta(n)\varphi F)+(Lf+\eps Af,\theta^{\eps}F)+(f,M\theta^{\eps}F)+\eps^{\frac{\alpha}{2}}(fn,\theta^{\eps}F),$$
where $H$ and $\theta$ are respectively defined in $(\ref{defH})$ and $(\ref{deftheta})$. Now, we choose $\theta^{\eps}$ so that $$(L-\eps A+M)(\theta^{\eps}F)=-\theta(n)\varphi F,$$
so that $(\ref{formal3})$ becomes
$$(f,-H\varphi F)+\eps^{\frac{\alpha}{2}}(fn,\theta^{\eps}F);$$
it allows us to conclude that $\mathscr{L}^{\eps}\Psi^{\eps}$ converges to $\mathscr{L}\Psi$ as $\eps\to 0$ where $\mathscr{L}$ is the infinitesimal generator of the equation $(\ref{stochasticeq})$ (note that $D^2\Psi\equiv 0$ so that no stochastic appears here). \\
As we said previously, the same kind of work can be done to correct $|\Psi|^2$. In the following subsections, we define rigorously the corrections of $\Psi$ and $|\Psi|^2$.

\subsection{Preliminaries to the deterministic correction}
\noindent As it is said above, we use the moment method presented in \cite{mellet} to correct the deterministic part of the equation $(\ref{eq})$. Let $\chi^{\eps}$ be the solution of the auxiliary problem
\begin{equation}\label{aux}
\chi^{\eps}-\eps v\cdot\nabla_x\chi^{\eps}=\varphi.
\end{equation}

\noindent We recall, see \cite{mellet}, that the solution of $(\ref{aux})$ is given by 
\begin{equation}\label{defchieps}
\chi^{\eps}(x,v)=\int_0^{+\infty}e^{-t}\varphi(x+\eps vt)\,\dd t,\quad x\in\R^d,\;v\in V.
\end{equation}
We now detail few results on $\chi^{\eps}$.

\begin{prop}\label{chi}The function $\chi^{\eps}F$ is in $\LL$ with
\begin{equation}\label{chinorme}\|\chi^{\eps}F\|\leq \|\varphi\|_{L^2_x}.\end{equation} 
Furthermore, for any $\lambda>0$, we have the following estimate: 
\begin{equation}\label{chiestimate}
\|(\chi^{\eps}-\varphi) F\|^2\lesssim C_{\lambda}^2\eps^2\|\nabla_x\varphi\|^2_{L^2_x}+\|\varphi\|^2_{L^2_x}\lambda^2.
\end{equation}
\end{prop}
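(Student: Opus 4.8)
The plan is to exploit the explicit formula $(\ref{defchieps})$ together with the identity $\|\chi^{\eps}F\|^2=\int_{\R^d}\int_V|\chi^{\eps}(x,v)|^2F(v)\,\dd v\,\dd x$, which follows immediately from the definition of the $\LL$-norm after cancelling one power of $F$. For the first bound $(\ref{chinorme})$, I would note that $e^{-t}\,\dd t$ is a probability measure on $(0,\infty)$, so Jensen's inequality gives $|\chi^{\eps}(x,v)|^2\leq\int_0^{\infty}e^{-t}|\varphi(x+\eps vt)|^2\,\dd t$. Integrating in $x$ and using translation invariance of Lebesgue measure (for fixed $v,t$) turns the inner integral into $\|\varphi\|_{L^2_x}^2$; since $\int_0^{\infty}e^{-t}\,\dd t=1$ and $\int_VF\,\dd v=1$, the remaining integrations collapse and yield $(\ref{chinorme})$ directly.

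For $(\ref{chiestimate})$ the starting point is the same formula, rewritten using $\int_0^\infty e^{-t}\,\dd t=1$ as
\[
\chi^{\eps}(x,v)-\varphi(x)=\int_0^{\infty}e^{-t}\bigl(\varphi(x+\eps vt)-\varphi(x)\bigr)\,\dd t .
\]
The naive approach would apply the mean value theorem to the bracket and bound it by $\eps|v|\,t\,|\nabla_x\varphi|$; but this fails globally because the resulting velocity integral $\int_V|v|^2F(v)\,\dd v$ diverges — exactly the point where the power tail $(\ref{powertail})$ with $\alpha<2$ enters. The remedy, and the heart of the argument, is to split the velocity domain at a threshold $M=M(\lambda)$: on $\{|v|\leq M\}$ the second moment of $F$ is finite and one uses smoothness, while on $\{|v|>M\}$ one discards all structure and uses only the smallness of the tail mass of $F$.

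On the small-velocity region I would write $\varphi(x+\eps vt)-\varphi(x)=\eps vt\cdot\int_0^1\nabla_x\varphi(x+s\eps vt)\,\dd s$, so that $|\chi^{\eps}-\varphi|\leq\eps|v|\int_0^{\infty}\int_0^1 e^{-t}t\,|\nabla_x\varphi(x+s\eps vt)|\,\dd s\,\dd t$. Since $e^{-t}t\,\dd s\,\dd t$ is a probability measure on $(0,\infty)\times(0,1)$, Jensen followed by integration in $x$ (translation invariance again) gives $\int_{\R^d}|\chi^{\eps}-\varphi|^2\,\dd x\leq\eps^2|v|^2\|\nabla_x\varphi\|^2_{L^2_x}$; integrating against $F$ over $\{|v|\leq M\}$ then produces the term $\eps^2\|\nabla_x\varphi\|^2_{L^2_x}\int_{|v|\leq M}|v|^2F(v)\,\dd v$, and I set $C_\lambda^2:=\int_{|v|\leq M}|v|^2F(v)\,\dd v<\infty$ (finite since $F\in L^{\infty}$ and the region is bounded). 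On the large-velocity region I would use $|\chi^{\eps}-\varphi|^2\leq 2|\chi^{\eps}|^2+2|\varphi|^2$ together with the same Jensen–Fubini computation as in part one, but with the velocity integral restricted to $\{|v|>M\}$, to obtain a contribution $\lesssim\|\varphi\|^2_{L^2_x}\int_{|v|>M}F(v)\,\dd v$. Finally, choosing $M=M(\lambda)$ so that $\int_{|v|>M}F(v)\,\dd v\leq\lambda^2$ — possible since $F\in L^1$ — and summing the two regions yields $(\ref{chiestimate})$. The only genuine obstacle is the divergence of the second velocity moment; once one commits to localizing in $v$ and letting the cutoff absorb the tail into the free parameter $\lambda$, everything else reduces to Jensen, Fubini and translation invariance.
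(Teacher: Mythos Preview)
Your proposal is correct and follows essentially the same route as the paper: Jensen (equivalently Cauchy--Schwarz against the probability measure $e^{-t}\dd t$) plus Fubini and translation invariance for $(\ref{chinorme})$, and a velocity cutoff at level $M(\lambda)$ with the mean-value theorem on $\{|v|\le M\}$ and the crude bound $|\chi^\eps-\varphi|^2\le 2|\chi^\eps|^2+2|\varphi|^2$ on $\{|v|>M\}$ for $(\ref{chiestimate})$. The only cosmetic difference is that the paper takes $C_\lambda$ to be the cutoff radius itself and bounds $|v|^2\le C_\lambda^2$ pointwise, whereas you set $C_\lambda^2:=\int_{|v|\le M}|v|^2F(v)\,\dd v$; both choices give a $\lambda$-dependent constant multiplying $\eps^2\|\nabla_x\varphi\|_{L^2_x}^2$, which is all that is required.
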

\begin{proof}See Appendix \hyperref[appendix]{A}. $\Box$
\end{proof}

\noindent In the two following lemmas, we study in detail the convergence to the fractional Laplace operator. We recall that $\kappa$ has been defined by $(\ref{defkappa})$.
\begin{lemma}\label{lemmelaplacienL2}
For any $\lambda>0$, we have the following estimate:
\begin{equation}\label{laplacienL2}
\left\|\eps^{-\alpha}\int_V[\chi^{\eps}(\cdot,v)-\varphi(\cdot)]F(v)\dd v+\kappa(-\Delta)^{\frac{\alpha}{2}}\varphi\right\|_{L^2_x}\!\!\!\!\!\!\!\lesssim(\Lambda(\eps)+\lambda)(\|\varphi\|_{L^2_x}+\|D^2\varphi\|_{L^2_x}),
\end{equation}
for a certain function $\Lambda$, which only depen\dd s on $\eps$, such that $\Lambda(\eps)\to 0$ when $\eps\to 0$.
\end{lemma}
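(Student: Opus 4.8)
The plan is to pass to the Fourier transform in $x$, where the operator $\varphi\mapsto\eps^{-\alpha}\int_V[\chi^{\eps}-\varphi]F\,\dd v$ becomes multiplication by an explicit symbol, and then reduce the claimed $L^2_x$ bound to a pointwise estimate on that symbol. Using $(\ref{defchieps})$ and the shift property of the Fourier transform I first compute
\[\widehat{\chi^{\eps}}(\xi,v)=\frac{\widehat{\varphi}(\xi)}{1-i\eps v\cdot\xi},\qquad \widehat{(\chi^{\eps}-\varphi)}(\xi,v)=\widehat{\varphi}(\xi)\,\frac{i\eps v\cdot\xi}{1-i\eps v\cdot\xi}.\]
Integrating against $F$ and using the symmetry $F(-v)=F(v)$ to cancel the imaginary part, I obtain
\[\mathcal F\Big(\eps^{-\alpha}\!\int_V[\chi^{\eps}-\varphi]F\,\dd v\Big)(\xi)=-\widehat{\varphi}(\xi)\,J^{\eps}(\xi),\qquad J^{\eps}(\xi):=\eps^{-\alpha}\!\int_V\frac{(\eps v\cdot\xi)^2}{1+(\eps v\cdot\xi)^2}F(v)\,\dd v.\]
Since the multiplier of $\kappa(-\Delta)^{\frac{\alpha}{2}}$ is $\kappa|\xi|^{\alpha}$, Plancherel turns the left-hand side of $(\ref{laplacienL2})$ into $\big(\int_{\R^d}|\widehat{\varphi}(\xi)|^2\,|J^{\eps}(\xi)-\kappa|\xi|^{\alpha}|^2\,\dd\xi\big)^{1/2}$ up to a harmless constant, so everything reduces to a pointwise bound on the symbol difference $|J^{\eps}(\xi)-\kappa|\xi|^{\alpha}|$.

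The second step is to represent this difference cleanly. Substituting $w=\eps v$ gives $J^{\eps}(\xi)=\int_V\frac{(w\cdot\xi)^2}{1+(w\cdot\xi)^2}\,\eps^{-d-\alpha}F(w/\eps)\,\dd w$, while $(\ref{powertail})$ together with the scale and rotation invariance of $|w|^{-d-\alpha}\dd w$ yields the identity $\kappa|\xi|^{\alpha}=\kappa_0\int_V\frac{(w\cdot\xi)^2}{1+(w\cdot\xi)^2}\,|w|^{-d-\alpha}\,\dd w$; this is exactly the constant computation underlying the deterministic limit, and the choice $(\ref{defkappa})$ of $\kappa$ is what makes it hold (see \cite{mellet}). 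Writing $F(v)=\kappa_0|v|^{-d-\alpha}(1+\omega(v))$, where $\omega$ is bounded (since $F\in L^{\infty}$ and $|v|^{d+\alpha}$ is locally bounded) and $\omega(v)\to0$ as $|v|\to\infty$ by $(\ref{powertail})$, the two expressions combine into
\[J^{\eps}(\xi)-\kappa|\xi|^{\alpha}=\kappa_0\int_V\frac{(w\cdot\xi)^2}{1+(w\cdot\xi)^2}\,\omega(w/\eps)\,\frac{\dd w}{|w|^{d+\alpha}}.\]

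The third step estimates this integral by splitting $V$ at $|w|=\sqrt{\eps}$. On $\{|w|\ge\sqrt{\eps}\}$ one has $|w/\eps|\ge\eps^{-1/2}$, hence $|\omega(w/\eps)|\le\eta(\eps):=\sup_{|v|\ge\eps^{-1/2}}|\omega(v)|\to0$, and bounding the remaining integral by the full one gives a contribution $\lesssim\eta(\eps)\,|\xi|^{\alpha}$. On $\{|w|<\sqrt{\eps}\}$ I discard $\omega$ by its sup-norm and use $\frac{(w\cdot\xi)^2}{1+(w\cdot\xi)^2}\le|w|^2|\xi|^2$ together with the integrability of $|w|^{2-d-\alpha}$ near the origin (here $\alpha<2$ is essential), giving a contribution $\lesssim|\xi|^2\eps^{(2-\alpha)/2}$. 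Setting $\Lambda(\eps):=\eta(\eps)+\eps^{(2-\alpha)/2}$, which depends only on $\eps$ and tends to $0$, and using $|\xi|^{\alpha}\le 1+|\xi|^2$, I get the uniform pointwise bound $|J^{\eps}(\xi)-\kappa|\xi|^{\alpha}|\lesssim\Lambda(\eps)\,(1+|\xi|^2)$. Feeding this into the Plancherel expression and using $\int_{\R^d}(1+|\xi|^2)^2|\widehat{\varphi}(\xi)|^2\,\dd\xi\lesssim\|\varphi\|_{L^2_x}^2+\|D^2\varphi\|_{L^2_x}^2$ yields $(\ref{laplacienL2})$; since $\Lambda(\eps)\le\Lambda(\eps)+\lambda$ this in fact proves a slightly stronger statement, the free parameter $\lambda$ being left as slack for compatibility with the companion estimate $(\ref{chiestimate})$.

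The main obstacle is not the convergence itself but its non-uniformity in $\xi$: the symbol $J^{\eps}(\xi)$ saturates (it stays bounded by $\eps^{-\alpha}$) and does not converge to $\kappa|\xi|^{\alpha}$ uniformly on $\R^d$, so the error must be carried with the growing weight $1+|\xi|^2$ — this is precisely why the second-order norm $\|D^2\varphi\|_{L^2_x}$, rather than $\|\varphi\|_{L^2_x}$ alone, appears on the right-hand side. The other delicate point is the exact constant identification $\kappa_0\int_V\frac{(w\cdot\xi)^2}{1+(w\cdot\xi)^2}|w|^{-d-\alpha}\dd w=\kappa|\xi|^{\alpha}$; matching it to $(\ref{defkappa})$ requires the relation between the constant $c_{d,\alpha}$ in the singular-integral representation of $(-\Delta)^{\frac{\alpha}{2}}$ and the Gamma factor $\int_0^{\infty}t^{\alpha}e^{-t}\dd t$, most transparently seen through the physical-space form $\eps^{-\alpha}\int_0^{\infty}e^{-t}\int_V[\varphi(\cdot+\eps vt)-\varphi]F\,\dd v\,\dd t$ and the deterministic computation of \cite{mellet}.
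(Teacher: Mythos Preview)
Your proof is correct and takes a genuinely different route from the paper's argument.

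The paper works in physical space: it writes $\eps^{-\alpha}\int_V(\chi^{\eps}-\varphi)F\,\dd v$ as $\eps^{-\alpha}\int_0^{\infty}e^{-t}\int_V[\varphi(x+\eps vt)-\varphi(x)]F(v)\,\dd v\,\dd t$, then splits the $v$-integral at a radius $C=C_\lambda$ chosen so that $|F(v)-\kappa_0|v|^{-d-\alpha}|\le\lambda\kappa_0|v|^{-d-\alpha}$ for $|v|\ge C$. The inner piece is handled by a second-order Taylor expansion (using the parity of $F$ to kill the first-order term), giving the $\eps^{2-\alpha}\|D^2\varphi\|_{L^2_x}$ contribution; the outer piece is compared, after the substitution $w=\eps vt$, with the singular-integral representation of $(-\Delta)^{\alpha/2}\varphi$, itself split into $\{|w|\ge1\}$ and $\{|w|\le1\}$, yielding several further cross-terms ($J_1,J_2,J_3$ vs.\ $L_1,L_2$). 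The free parameter $\lambda$ is genuinely used: it absorbs the remainder $I_3$ coming from $F(v)-\kappa_0|v|^{-d-\alpha}$ on $\{|v|\ge C\}$, while the resulting $\Lambda(\eps)$ implicitly depends on $C_\lambda$.

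Your Fourier approach bypasses all of this bookkeeping: the problem collapses to a pointwise estimate on the scalar symbol $J^{\eps}(\xi)-\kappa|\xi|^{\alpha}$, and writing $F(v)=\kappa_0|v|^{-d-\alpha}(1+\omega(v))$ with $\omega\in L^{\infty}$, $\omega(v)\to0$, reduces it to a single integral that you split at $|w|=\sqrt{\eps}$. This yields an explicit $\Lambda(\eps)=\sup_{|v|\ge\eps^{-1/2}}|\omega(v)|+\eps^{(2-\alpha)/2}$ depending on $\eps$ alone, so you actually prove the stronger bound without the $\lambda$-slack (as you note). The only point that deserves a line of justification in a write-up is the constant identity $\kappa_0\int_{\R^d}\frac{(w\cdot\xi)^2}{1+(w\cdot\xi)^2}|w|^{-d-\alpha}\,\dd w=\kappa|\xi|^{\alpha}$; it follows from $\frac{a^2}{1+a^2}=\int_0^{\infty}e^{-t}(1-\cos(at))\,\dd t$ together with the Fourier identity $c_{d,\alpha}\int_{\R^d}(1-\cos(h\cdot\xi))|h|^{-d-\alpha}\,\dd h=|\xi|^{\alpha}$ encoded in the singular-integral definition of $(-\Delta)^{\alpha/2}$, which immediately recovers $(\ref{defkappa})$. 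The trade-off is that the paper's physical-space argument stays close to the singular-integral representation used elsewhere and makes the role of the tail hypothesis $(\ref{powertail})$ very transparent, whereas your symbol computation is shorter, more quantitative, and decouples the two small parameters.
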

\begin{proof}See Appendix \hyperref[appendix]{A}. $\Box$
\end{proof}

\begin{lemma}\label{lemmeLaplacien}For any $\lambda>0$, we have the following estimate:
\begin{equation}\label{Laplacien}
\left|\eps^{-\alpha}(\eps Af+Lf,\chi^{\eps}F)+(\kappa(-\Delta)^{\frac{\alpha}{2}}f,\varphi F)\right|\lesssim(\Lambda(\eps)+\lambda)\|f\|(\|\varphi\|_{L^2_x}+\|D^2\varphi\|_{L^2_x}),
\end{equation}
for a certain function $\Lambda$, which only depends on $\eps$, such that $\Lambda(\eps)\to 0$ when $\eps\to 0$.
\end{lemma}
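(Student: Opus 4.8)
The plan is to reduce the left-hand side, which lives in the weighted space $\LL$, to the purely $x$-dependent quantity already controlled by Lemma \ref{lemmelaplacienL2}, and then to conclude by Cauchy-Schwarz; all the genuine analytic content (the convergence to the fractional Laplacian) is contained in that lemma, so here only adjointness manipulations remain.

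First I would exploit the adjointness of $L$ and $A$ together with the auxiliary equation $(\ref{aux})$ to simplify $(\eps Af+Lf,\chi^{\eps}F)$. Since $A=-v\cdot\nabla_x$ is skew-adjoint on $\LL$ and $F$ does not depend on $x$, one has $A(\chi^{\eps}F)=(A\chi^{\eps})F$, hence $\eps(Af,\chi^{\eps}F)=-\eps(f,(A\chi^{\eps})F)$; but $-\eps A\chi^{\eps}=\eps v\cdot\nabla_x\chi^{\eps}=\chi^{\eps}-\varphi$ by $(\ref{aux})$, so this term equals $(f,(\chi^{\eps}-\varphi)F)$. Since $L$ is self-adjoint and $L(\chi^{\eps}F)=\bigl(\int_V\chi^{\eps}F\,\dd v\bigr)F-\chi^{\eps}F$, the second term equals $\bigl(f,[\int_V\chi^{\eps}F\,\dd v-\chi^{\eps}]F\bigr)$. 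Adding the two and using $\int_V F\,\dd v=1$ (so that $\varphi=\int_V\varphi F\,\dd v$), the $\chi^{\eps}$ contributions cancel and I obtain the clean identity
$$(\eps Af+Lf,\chi^{\eps}F)=\left(f,\left[\int_V(\chi^{\eps}(\cdot,v)-\varphi)F(v)\,\dd v\right]F\right).$$

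Next I would transfer the fractional Laplacian onto $\varphi$: since $(-\Delta)^{\frac{\alpha}{2}}$ is self-adjoint on $L^2_x$ and the $F^{-1}$-weight in $(\cdot,\cdot)$ cancels against the factor $F$, one checks that $(\kappa(-\Delta)^{\frac{\alpha}{2}}f,\varphi F)=(f,\kappa[(-\Delta)^{\frac{\alpha}{2}}\varphi]F)$. Multiplying the previous identity by $\eps^{-\alpha}$ and adding this, the whole quantity to be estimated takes the form $(f,G^{\eps}F)$ with
$$G^{\eps}:=\eps^{-\alpha}\int_V(\chi^{\eps}(\cdot,v)-\varphi)F(v)\,\dd v+\kappa(-\Delta)^{\frac{\alpha}{2}}\varphi,$$
which is a function of $x$ alone.

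Finally I would apply Cauchy-Schwarz in $\LL$. Because $G^{\eps}$ depends only on $x$, one has $\|G^{\eps}F\|^2=\int_{\R^d}|G^{\eps}|^2\bigl(\int_V F\,\dd v\bigr)\dd x=\|G^{\eps}\|_{L^2_x}^2$, so that $|(f,G^{\eps}F)|\leq\|f\|\,\|G^{\eps}\|_{L^2_x}$. But $\|G^{\eps}\|_{L^2_x}$ is exactly the left-hand side of $(\ref{laplacienL2})$, which Lemma \ref{lemmelaplacienL2} bounds by $(\Lambda(\eps)+\lambda)(\|\varphi\|_{L^2_x}+\|D^2\varphi\|_{L^2_x})$; this gives the claim. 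The only delicate points are bookkeeping ones: keeping track of the $F^{-1}$-weighting throughout, and justifying the skew-/self-adjoint manipulations (which are valid for $f\in\mathrm{D}(A)$, or for general $f\in\LL$ once $A$ has been moved onto $\chi^{\eps}$) and the duality transfer of $(-\Delta)^{\frac{\alpha}{2}}$. I do not expect a substantive obstacle, precisely because the estimate $(\ref{laplacienL2})$ does the heavy lifting.
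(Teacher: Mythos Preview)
Your proposal is correct and follows essentially the same route as the paper: move the operators onto $\chi^{\eps}F$ by (skew-)adjointness, use the auxiliary equation $(\ref{aux})$ to collapse everything to the $x$-only function $G^{\eps}=\eps^{-\alpha}\int_V(\chi^{\eps}-\varphi)F\,\dd v+\kappa(-\Delta)^{\frac{\alpha}{2}}\varphi$, and then apply Cauchy--Schwarz together with Lemma~\ref{lemmelaplacienL2}. The paper writes the computation out directly in integral form (obtaining $\int_{\R^d}\rho\,G^{\eps}\,\dd x$ and bounding with $\|\rho\|_{L^2_x}\leq\|f\|$), while you phrase the same identities via the adjoints of $L$ and $A$ and pair $f$ with $G^{\eps}F$ in $\LL$; these are literally the same quantity, so there is no substantive difference.
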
	 
\begin{proof}See Appendix \hyperref[appendix]{A}. $\Box$
\end{proof}

\subsection{Preliminaries to the stochastic corrections}
\subsubsection{The corrector $\delta^{\eps}$}

\noindent We recall that $g(t,\cdot)$ denotes the semi-group generated by the operator $L$ on $\LL$ and that the function $\chi^{\eps}$ has been defined in $(\ref{aux})$. Then, we define the function $\delta^{\eps}:\R^d\times V\times E\to \R$ by
$$\delta^{\eps}(x,v,n)F(v):=\int_0^{+\infty}e^{tM}g(t,n\chi^{\eps}F)(x+\eps vt,v)\,\dd t,$$
and we give here some properties of $\delta^{\eps}$. We recall that the test function $\varphi$ has been fixed in Section $\ref{choicetestfunctions}$.
\begin{prop}\label{delta}The function $\delta^{\eps}F$ belongs to $\mathcal{B}(E,\LL)$ with
\begin{equation}\label{deltanorme}
\|\delta^{\eps}F\|_{\mathcal{B}(E,\LL)}\lesssim \|\varphi \|_{L^2_x}.
\end{equation} It satisfies
\begin{equation}\label{deltaeq}
(L-\eps A+M)(\delta^{\eps}F)=-n\chi^{\eps}F,
\end{equation}
with 
\begin{equation}\label{deltanormeM}
\|M\delta^{\eps}F\|_{\mathcal{B}(E,\LL)}\lesssim \|\varphi \|_{L^2_x}.
\end{equation}
Furthermore, for any $\lambda>0$, we have the two following estimates:
\begin{equation}\label{deltaestimate}
\|\delta^{\eps}F+M^{-1}I(n)\varphi F\|_{\mathcal{B}(E,\LL)}\lesssim C_{\lambda}\|\nabla_x\varphi\|_{L^2_x}\eps+\|\varphi\|_{L^2_x}\lambda,
\end{equation}
\begin{equation}\label{deltabonus}
\|M\delta^{\eps}F+n\chi^{\eps} F\|_{\mathcal{B}(E,\LL)}\lesssim C_{\lambda}\|\nabla_x\varphi\|_{L^2_x}\eps+\|\varphi\|_{L^2_x}\lambda.
\end{equation} 
\end{prop}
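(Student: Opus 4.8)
The plan is to read $\delta^{\eps}F$ as the resolvent $-(L-\eps A+M)^{-1}(n\chi^{\eps}F)$, written out through the evolution it generates, and to extract each assertion from this integral representation. For the boundedness $(\ref{deltanorme})$ I would fix $n\in E$ and apply Minkowski's integral inequality to the defining integral, bounding $\|\delta^{\eps}F(\cdot,\cdot,n)\|$ by $\int_0^{\infty}\|[e^{tM}g(t,n\chi^{\eps}F)](\cdot+\eps\cdot t,\cdot)\|\,\dd t$. Two reductions then make the integrand explicit. The velocity-dependent translation $x\mapsto x+\eps vt$ is an isometry of $\LL$, since $F$ does not depend on $x$ the change of variables $y=x+\eps vt$ leaves $\int|\cdot|^2/F$ invariant, so it may be dropped; and the contraction $(\ref{decroiss})$ extends from $\mathcal{B}(E)$ to $\mathcal{B}(E,\LL)$ by testing against unit vectors of $\LL$ and using that $M$ commutes with bounded linear functionals, giving $\|e^{tM}G\|_{\mathcal{B}(E,\LL)}\le e^{-\mu t}\|G\|_{\mathcal{B}(E,\LL)}$ for every centered $G$. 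The map $G_t:=g(t,\cdot\,\chi^{\eps}F)$ is centered in $n$ (it is linear in $n$ and $\int_E n\,\dd\nu=0$), so combining this decay, the contraction of $g$, the bound $\|n\|_{\infty}\le KS$ on $E$ and $(\ref{chinorme})$ gives an integrand $\lesssim e^{-\mu t}\|\varphi\|_{L^2_x}$; integrating in $t$ yields $(\ref{deltanorme})$.

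For the corrector equation $(\ref{deltaeq})$ I would write $U(t)$ for the integrand, differentiate it in $t$, and try to establish the generator identity $\frac{\dd}{\dd t}U(t)=(L-\eps A+M)U(t)$; integrating from $0$ to $\infty$ with $U(0)=n\chi^{\eps}F$ and $U(\infty)=0$ (guaranteed by the decay) would produce $(L-\eps A+M)\delta^{\eps}F=-n\chi^{\eps}F$, once one justifies that the closed operator $L-\eps A+M$ passes through the integral. I expect this to be the main obstacle. The $M$-part is immediate because $M$ commutes with $g(t,\cdot)$, with the transport shift and with $\int_V$, and the shift reproduces $-\eps A$ exactly by the chain rule; the delicate point is the collision part, because $L$ and the transport operator $A$ do not commute, so one must use the explicit rank-one form of $g(t,\cdot)$ together with the equilibrium structure of the source to check that relaxation and transport assemble into the correct generator. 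This is where I would concentrate the effort, and it is the step where the specific structure of $L$ is essential; failing a direct verification, I would fall back on checking $(\ref{deltaeq})$ weakly by pairing with smooth test functions and integrating by parts.

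The bound $(\ref{deltanormeM})$ I would obtain directly rather than by inverting the unbounded $A$ through the equation. Since $M$ commutes with the time integral, the shift and the evaluation, and since $Me^{tM}=e^{tM}M$ on $\mathrm{D}(M)$, one has $M\delta^{\eps}F=\int_0^{\infty}[e^{tM}MG_t](\cdot+\eps\cdot t,\cdot)\,\dd t$. Because $G_t(n)(x,v)=n(x)k_t(x,v)$ with $k_t$ deterministic, $MG_t$ is of the same form with $n(\cdot)=\sum_i n_i\eta_i$ replaced by $(Mn)(\cdot)=\sum_i (Mn_i)\eta_i$, which is again centered and controlled by $(\ref{borneM1})$ ($\|Mn_i\|_{\infty}\le K$) and $S<\infty$. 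The same decay-and-integrate argument as above then gives $\|M\delta^{\eps}F\|_{\mathcal{B}(E,\LL)}\lesssim\|\varphi\|_{L^2_x}$.

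Finally, for the convergence estimates $(\ref{deltaestimate})$ and $(\ref{deltabonus})$, the formal limit is $\delta^{\eps}F\to -M^{-1}I(n)\varphi F$: at $\eps=0$ the source $n\varphi F$ lies in $\ker L$, so $g(t,\cdot)$ acts as the identity and the remaining integral of $e^{tM}(n\varphi F)$ reproduces $-M^{-1}I(n)\varphi F$ through $(\ref{inversedeM})$ and the definition of $M^{-1}I$. To make this quantitative I would write $\delta^{\eps}F+M^{-1}I(n)\varphi F$ as a single time integral and split the error into the contribution of $\chi^{\eps}-\varphi$, controlled by $(\ref{chiestimate})$ and thus of size $C_{\lambda}\eps\|\nabla_x\varphi\|_{L^2_x}+\lambda\|\varphi\|_{L^2_x}$, and the contribution of the shift $x+\eps vt$ together with the non-equilibrium part of $n\chi^{\eps}F$, which carries one extra power of $\eps$ (a factor $\eps t$ from Taylor-expanding the shift) absorbed by the decay $e^{-\mu t}$. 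For $(\ref{deltabonus})$ I would use $(\ref{deltaeq})$ in the form $M\delta^{\eps}F+n\chi^{\eps}F=-(L-\eps A)\delta^{\eps}F$ and bound the right-hand side: $\|L\delta^{\eps}F\|$ is small because the limit $-M^{-1}I(n)\varphi F$ is annihilated by $L$, so $(\ref{deltaestimate})$ and boundedness of $L$ suffice, while $\eps\|A\delta^{\eps}F\|$ is handled by a gradient estimate from differentiating the defining integral in $x$, the derivative falling on $\chi^{\eps}$ and producing the factor $\|\nabla_x\varphi\|_{L^2_x}$.
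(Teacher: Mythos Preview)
Your treatment of \eqref{deltanorme}, \eqref{deltaeq} and \eqref{deltanormeM} is essentially the paper's argument, phrased globally rather than through the series decomposition $n=\sum_i n_i\eta_i$; the paper works term by term on the $\alpha_i$ and invokes \eqref{decroiss} on each scalar $n_i$, while you invoke it directly on the $\LL$-valued map. This is fine.

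The gap is in your handling of the shift in \eqref{deltaestimate} and of $\eps A\delta^{\eps}F$ in \eqref{deltabonus}. Both steps implicitly require a finite second moment of $F$, which fails here: $F(v)\sim\kappa_0|v|^{-d-\alpha}$ with $\alpha<2$ gives $\int_V|v|^2F(v)\,\dd v=\infty$. Concretely, your ``Taylor-expanding the shift'' for \eqref{deltaestimate} bounds $g(t,n\varphi F)(x+\eps vt,v)-g(t,n\varphi F)(x,v)$ by $\eps t|v|\,|\nabla_x(n\varphi)|$, and after squaring and dividing by $F$ you are left with $\int|v|^2F(v)\,\dd v$, which diverges. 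The paper avoids this by repeating the $\lambda$-splitting of Proposition~\ref{chi} on the shift term itself (split $|v|\le C_\lambda$ versus $|v|>C_\lambda$), which is precisely what produces the two-term bound $C_\lambda\eps\|\nabla_x\varphi\|_{L^2_x}+\lambda\|\varphi\|_{L^2_x}$ rather than a pure $O(\eps)$.

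For \eqref{deltabonus} your plan is to write $M\delta^{\eps}F+n\chi^{\eps}F=-(L-\eps A)\delta^{\eps}F$ and bound the right side. The $L$-part is indeed fine via \eqref{deltaestimate}, but ``a gradient estimate from differentiating the defining integral in $x$'' for $\eps\|A\delta^{\eps}F\|$ runs into the same infinite-second-moment obstruction (the unbounded factor $|v|$ in $A=-v\cdot\nabla_x$ is not compensated). Note also that using the equation to control $\eps A\delta^{\eps}F$ becomes circular: $\eps A\delta^{\eps}F=L\delta^{\eps}F+(M\delta^{\eps}F+n\chi^{\eps}F)$. The paper therefore does not go through the equation at all for \eqref{deltabonus}; it writes $M\delta^{\eps}F=\sum_i\int_0^\infty e^{tM}Mn_i\,g(t,\eta_i\chi^{\eps}F)(x+\eps vt,v)\,\dd t$ and $n\chi^{\eps}F$ in the matching integral form, then compares the two integrals directly and applies the same $\lambda$-splitting as in \eqref{deltaestimate}. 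That is the missing ingredient in your outline.
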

\begin{proof}
{\em Proof of \eqref{deltanorme}}. The definition of $\delta^{\eps}F$ can be rewritten, thanks to $(\ref{decompositionden})$, as
$$\delta^{\eps}(x,v,n)F(v)=\sum_{i=0}^{+\infty}\int_0^{+\infty}e^{tM}n_i(n)g(t,\eta_i\chi^{\eps}F)(x+\eps vt,v)\,\dd t=:\sum_{i=0}^{+\infty}\alpha_i(x,v,n).$$
Then we fix $i\in\N$ and $n\in E$. We have
\begin{alignat*}{2}
\|\alpha_i(\cdot,\cdot,n)\|^2&=\int_{\R^d}\!\int_V\left(\int_0^{+\infty}e^{tM}n_i(n)g(t,\eta_i\chi^{\eps}F)(x+\eps vt,v)\,\dd t\right)^2\frac{\dd v}{F(v)}\dd x\\
&\leq \int_{\R^d}\!\int_V\left(\int_0^{+\infty}Ke^{-\mu t}|g(t,\eta_i\chi^{\eps}F)|(x+\eps vt,v)\,\dd t\right)^2\frac{\dd v}{F(v)}\dd x\\
&\leq \frac{K^2}{\mu}\int_{\R^d}\!\int_V\int_0^{+\infty}e^{-\mu t}|g(t,\eta_i\chi^{\eps}F)|^2(x+\eps vt,v)\,\dd t\frac{\dd v}{F(v)}\dd x \\
&=\frac{K^2}{\mu}\int_0^{\infty}e^{-\mu t}\|g(t,\eta_i\chi^{\eps}F)\|^2\,\dd t \leq \frac{K^2}{\mu^2}\|\eta_i\chi^{\eps}F\|^2\leq \frac{K^2}{\mu^2}\|\eta_i\|^2_{W^{1,\infty}}\|\varphi F\|^2,
\end{alignat*}
where we used $(\ref{nicentres})$, $(\ref{decroiss})$, $(\ref{borneM1})$, Cauchy-Schwarz inequality, the contraction property of the semigroup $g(t,\cdot)$ $(\ref{contract})$ and finally $(\ref{chinorme})$.
We thus get $$\|\alpha_i\|_{\mathcal{B}(E,\LL)}\leq \frac{K}{\mu}\|\eta_i\|_{W^{1,\infty}}\|\varphi F\|.$$
Since $S=\sum_{i\in\N}\|\eta_i\|_{W^{1,\infty}}<\infty$, we finally deduce that the series defining $\delta^{\eps}F$ converges absolutely in $\mathcal{B}(E,\LL)$ and that
$$\|\delta^{\eps}F\|_{\mathcal{B}(E,\LL)}\lesssim \|\varphi F\|=\|\varphi\|_{L^2_x}.$$
{\em Proof of \eqref{deltaeq}.} Fix $i\in\N$, $\alpha_i$ maps $E$ into $\LL$. We claim that $\alpha_i\in \mathrm{D}(M)$ with, for all $n\in E$,
$$M\alpha_i(x,v,n)=\int_0^{+\infty}Me^{tM}n_i(n)g(t,\eta_i\chi^{\eps}F)(x+\eps vt,v)\,\dd t =:\beta_i(x,v,n)$$
in $\LL$.
Indeed, for $n\in E$, we have
\begin{alignat*}{2}
{}&\int_{\R^d}\!\int_V \left(\frac{e^{hM}\alpha_i(x,v,n)-\alpha_i(x,v,n)}{h}-\beta_i(x,v,n)\right)^2\frac{\dd v}{F(v)}\dd x\\
&=\int_{\R^d}\!\int_V\left(\int_0^{\infty} \left[\frac{e^{(t+h)M}-e^{tM}}{h}-Me^{tM}\right]n_i(n)g(t,\eta_i\chi^{\eps}F)(x+\eps vt,v)\,\dd t\right)^2\frac{\dd v}{F(v)}\dd x\\
&\leq\int_{\R^d}\!\int_V\left(\int_0^{\infty} e^{-\mu t}\left\|\left[\frac{e^{hM}-I}{h}-M\right]n_i\right\|_{\infty}|g(t,\eta_i\chi^{\eps}F)|(x+\eps vt,v)\,\dd t\right)^2\frac{\dd v}{F(v)}\dd x\\
&\leq \frac{1}{\mu^2}\left\|\left[\frac{e^{hM}-I}{h}-M\right]n_i\right\|^2_{\infty}\|\eta_i\|^2_{W^{1,\infty}}\|\varphi F\|^2.
\end{alignat*}
Since by $(\ref{dansdomaine})$, $n\mapsto n_i(n)\in\mathrm{D}(M)$ we deduce that
$$\left\|\frac{e^{hM}\alpha_i-\alpha_i}{h}-\beta_i\right\|_{\mathcal{B}(E,\LL)}\!\!\!\!\!\!\leq \frac{1}{\mu}\left\|\left[\frac{e^{hM}-I}{h}-M\right]n_i\right\|_{\infty}\!\!\!\|\eta_i\|_{W^{1,\infty}}\|\varphi F\| \underset{h\to 0}{\longrightarrow} 0,$$
which is just what we needed. Now, with $(\ref{borneM1})$, we apply Lemma \ref{ehmmoinsiborne} so that we deduce, with the fact that $\sum_{i\in\N}\|\eta_i\|_{W^{1,\infty}}<\infty$ and the dominated convergence theorem, that $\delta^{\eps}F\in\mathrm{D}(M)$ with
$$M[\delta^{\eps}F](x,v,n)=\sum_{i=0}^{\infty}\beta_i(x,v,n),$$
where the series converges absolutely in $\mathcal{B}(E,\LL)$. We fix $i\in\N$, $n\in E$ and $v\in V$. We recall that $\eta_i$ is in $W^{1,\infty}(\R^d)$ and that $\chi^{\eps}$ is defined by $(\ref{defchieps})$ where $\varphi$ is in the Schwartz space $\mathcal{S}(\R^d)$. Then it is easily seen that $\eta_i\chi^{\eps}F$ and $\overline{\eta_i\chi^{\eps}F}$ are in $W^{1,2}(\R^d)$ with respect to $x$. Therefore, since $g(t,\eta_i\chi^{\eps}F)=\overline{\eta_i\chi^{\eps}F}F(1-e^{-t})+\eta_i\chi^{\eps}Fe^{-t}$, we obtain that $h_1:=t\in(0,\infty)\mapsto g(t,\eta_i\chi^{\eps}F)(x+\eps vt,v)$ is in $W^{1,\infty}((0,\infty),L^2_x)$ with $$h_1'(t)(x,v)= Lg(t,\eta_i\chi^{\eps}F)(x+\eps vt,v)+\eps v\cdot\nabla_x g(t,\eta_i\chi^{\eps}F)(x+\eps vt,v),$$ in $L^2_x$.
Furthermore, with $(\ref{decroiss})$, $h_2:=t\in(0,\infty)\mapsto e^{tM}n_i(n)$ is clearly in $W^{1,1}((0,\infty),\R)$ with $h_2'(t)=Me^{tM}n_i(n)$.
We now get by integration by parts
\begin{alignat*}{2}
\beta_i(x,v,n)&=\int_0^{+\infty}Me^{tM}n_i(n)g(t,\eta_i\chi^{\eps}F)(x+\eps vt,v)\,\dd t\\
&\!\!\!\!\!\!\!\!\!\!\!\!\!\!\!\!\!\!\!\!\!\!\!\!\!\!\!\!=\left[e^{tM}n_i(n)g(t,\eta_i\chi^{\eps}F)(x+\eps vt,v)\right]^{\infty}_0-\int_0^{+\infty}e^{tM}n_i(n)\frac{d.}{\dd t}g(t,\eta_i\chi^{\eps}F)(x+\eps vt,v)\,\dd t\\
&=-n_i(n)\eta_i\chi^{\eps}F(x,v)-\int_0^{+\infty}e^{tM}n_i(n)Lg(t,\eta_i\chi^{\eps}F)(x+\eps vt,v)\,\dd t\\
&\,\,\,\,\,\,\,\,\,\,\,\,\,\,-\eps v\cdot\int_0^{+\infty}e^{tM}n_i(n)\nabla_x g(t,\eta_i\chi^{\eps}F)(x+\eps vt,v)\,\dd t,
\end{alignat*}
where all the equalities have to be understood in $L^2_x$.
We easily see that the last two terms of the preceding equality are respectively equal in $L^2_x$ to $-L\alpha_i(x,v,n)$ and $\eps A\alpha_i(x,v,n)$. As a result, we just proved that for all $i\in\N$ and $n\in E$, we have the following equality for almost all $x\in\R^d$ and $v\in V$:
\begin{equation}\label{bs}(L-\eps A+M)\alpha_i(x,v,n)=-n_i(n)\eta_i\chi^{\eps}F(x,v).\end{equation}
Now, the right hand term of the last equality is clearly in $\LL$. Since $\alpha_i$ is in $\LL$, $L\alpha_i\in\LL$; and we proved  above that $M\alpha_i \in\LL$. As a consequence $A\alpha_i$ is in $\LL$ and the preceding equality is valid in $\LL$.
We want to sum over $i\in\N$. We previously proved that we have, in $\mathcal{B}(E,\LL)$, $\sum_{i=0}^{+\infty}M\alpha_i=\sum_{i=0}^{+\infty}\beta_i=M[\delta^{\eps}F].$
Since the series $\sum_{i\in\N}\alpha_i$ converges absolutely in $\mathcal{B}(E,\LL)$ and since $L$ is a bounded operator on $\LL$, we also deduce that we have, in $\mathcal{B}(E,\LL)$, $\sum_{i=0}^{+\infty}L\alpha_i=L[\delta^{\eps}F].$ Since $\sum_{i\in\N}n_i\eta_i$ converges absolutely in $W^{1,\infty}(\R^d)$ to $n$, we obtain that $\sum_{i\in\N}n_i\eta_i\chi^{\eps}F$ converges absolutely in $\mathcal{B}(E,\LL)$ to $n\chi^{\eps}F$. Finally, with $(\ref{bs})$ and the fact that $A$ is a closed operator, we also have, in $\mathcal{B}(E,\LL)$, $\sum_{i=0}^{+\infty}A\alpha_i=A[\delta^{\eps}F].$
Summing $(\ref{bs})$ over $i\in\N$ now gives $(L-\eps A+M)(\delta^{\eps}F)=-n\chi^{\eps}F$.\\
{\em Proof of \eqref{deltanormeM}} We just proved that $M\delta^{\eps}F=\sum_{i=0}^{+\infty}\beta_i$, with
\begin{align*}
\beta_i(x,v,n)&=\int_0^{+\infty}Me^{tM}n_i(n)g(t,\eta_i\chi^{\eps}F)(x+\eps vt,v)\,\dd t\\ &=\int_0^{+\infty}e^{tM}Mn_i(n)g(t,\eta_i\chi^{\eps}F)(x+\eps vt,v)\,\dd t,
\end{align*}
so that we immediately deduce $(\ref{deltanormeM})$ thanks to $(\ref{borneM1})$.\\
{\em Proof of $(\ref{deltaestimate})$.} Let $\lambda >0$. First of all, we point out that $g(t,\eta_i\varphi F)=\eta_i\varphi F$ so that
$$-M^{-1}n_i(n)\eta_i\varphi F(x,v)=\int_0^{\infty}e^{tM}n_i(n)g(t,\eta_i\varphi F)(x,v)\dd t.$$
We can then write, for $i\in\N$ and $n\in E$,
\begin{alignat*}{2}
{}&\|\alpha_i(\cdot,\cdot,n)+M^{-1}n_i(n)\eta_i\varphi F\|^2 \\
&\leq \int_{\R^d}\!\int_V\left(\int_0^{+\infty}e^{tM}n_i(n)g(t,\eta_i(\chi^{\eps}-\varphi)F)(x+\eps vt,v)\,\dd t\right)^2\frac{\dd v}{F(v)}\dd x\\
& +\int_{\R^d}\!\int_V\left(\int_0^{+\infty}e^{tM}n_i(n)\left[g(t,\eta_i\varphi F)(x+\eps vt,v)-g(t,\eta_i\varphi F)(x,v)\right]\,\dd t\right)^2\frac{\dd v}{F(v)}\dd x.
\end{alignat*}
Similarly as the very beginning of the proof, we can bound the first term by 
$$\frac{K^2}{\mu^2}\|\eta_i\|^2_{W^{1,\infty}}\|(\chi^{\eps}-\varphi) F\|^2,$$
and we recall that we have, with $(\ref{chiestimate})$,
$$\|(\chi^{\eps}-\varphi) F\|^2\leq 2C_{\lambda}^2\eps^2\|\nabla_x\varphi\|^2_{L^2_x}+4\|\varphi\|^2_{L^2_x}\lambda^2.$$
For the second term, $B$ say, we write
\begin{alignat*}{2}
B&=\int_{\R^d}\!\int_V\left(\int_0^{+\infty}e^{tM}n_i(n)\left[\eta_i\varphi F(x+\eps vt,v)-\eta_i\varphi F(x,v)\right]\,\dd t\right)^2\frac{\dd v}{F(v)}\dd x.\\
&\leq \frac{K^2}{\mu}\|\eta_i\|^2_{W^{1,\infty}}\int_{\R^d}\!\int_V\int_0^{+\infty}e^{-\mu t}\left[\varphi(x+\eps vt)-\varphi(x)\right]^2\,\dd tF(v)\dd v\dd x.
\end{alignat*}
We can then mimic the proof of Proposition $\ref{chi}$ to get the following bound 
$$\int_{\R^d}\!\int_V\int_0^{+\infty}\!\!\!\!e^{-\mu t}\left[\varphi(x+\eps vt)-\varphi(x)\right]^2\dd tF(v)\dd v\dd x\leq \frac{2C_{\lambda}^2}{\mu^3}\eps^2\|\nabla_x\varphi\|^2_{L^2_x}+\frac{4}{\mu}\|\varphi\|^2_{L^2_x}\lambda^2.$$
To sum up, we just obtained, for $i\in\N$ and $n\in E$,
\begin{alignat*}{2}
\|\alpha_i(\cdot,\cdot,n)+M^{-1}n_i(n)\eta_i\varphi F\|&\lesssim \|\eta_i\|_{W^{1,\infty}}\left(C_{\lambda}^2\|\nabla_x\varphi\|^2_{L^2_x}\eps^2+\|\varphi\|^2_{L^2_x}\lambda^2\right)^{\frac{1}{2}}\\
&\lesssim \|\eta_i\|_{W^{1,\infty}}\left(C_{\lambda}\|\nabla_x\varphi\|_{L^2_x}\eps+\|\varphi\|_{L^2_x}\lambda\right).
\end{alignat*}
We can now sum over $i\in\N$ to obtain,
$$\|\delta^{\eps}F+M^{-1}I(n)\varphi F\|_{\mathcal{B}(E,\LL)}\lesssim C_{\lambda}\|\nabla_x\varphi\|_{L^2_x}\eps+\|\varphi\|_{L^2_x}\lambda,$$
which is the bound expected. \\
{\em Proof of $(\ref{deltabonus})$.} We recall that $M\delta^{\eps}F=\sum_{i=0}^{+\infty}\beta_i$, with $\beta_i$ defined above.
Note that $$n\chi^{\eps}F(x,v)=\sum_{i=0}^{+\infty}\int_0^{+\infty}e^{tM}Mn_i(n)\eta_i\chi^{\eps}F(x,v)\,\dd t,$$
so that we decompose $M\delta^{\eps}(x,v,n)F(v)+n\chi^{\eps}F(x,v)$ into two terms
\begin{multline*}
\sum\limits_{i=0}^{+\infty}\int_0^{+\infty}e^{tM}Mn_i(n)\left[g(t,\eta_i\chi^{\eps}F)(x+\eps vt,v)-g(t,\eta_i\varphi F)(x,v)\right]\,\dd t\\
+\sum\limits_{i=0}^{+\infty}\int_0^{+\infty}e^{tM}Mn_i(n)\left[\eta_i\varphi F)(x,v)-\eta_i\chi^{\eps} F(x,v)\right]\,\dd t.
\end{multline*}
As we have done previously, we can show that the first term is, in $\mathcal{B}(E,\LL)$, \linebreak $\lesssim \left(C_{\lambda}\|\nabla_x\varphi\|_{L^2_x}\eps+\|\varphi\|_{L^2_x}\lambda\right)$. We bound the second term in $\mathcal{B}(E,\LL)$ as $\lesssim \|(\chi^{\eps}-\varphi)F\|$, that is, thanks to $(\ref{chiestimate})$, $\lesssim \left(C_{\lambda}\|\nabla_x\varphi\|_{L^2_x}\eps+\|\varphi\|_{L^2_x}\lambda\right)$. It finally gives the bound expected. This concludes the proof. $\Box$
\end{proof}

\subsubsection{The corrector $\theta^{\eps}$}

\noindent We recall that, for all $n\in E$,
$$ \theta(n)=\int_EnM^{-1}I(n)\dd\nu(n)-nM^{-1}I(n),$$
and that, for $i,j\in\N$, $\theta_{i,j}=\int_En_iM^{-1}n_j\dd\nu-n_iM^{-1}n_j$. Then we define the function $\theta^{\eps}:\R^d\times V\times E\to \R$ by
$$\theta^{\eps}(x,v,n)F(v):=\int_0^{+\infty}e^{tM}g(t,\theta(n)\varphi F)(x+\eps vt,v)\,\dd t,$$
that is,
$$\theta^{\eps}(x,v,n)F(v):=\sum_{i,j=0}^{+\infty}\int_0^{+\infty}e^{tM}\theta_{i,j}(n)g(t,\eta_i\eta_j\varphi F)(x+\eps vt,v)\,\dd t,$$
and, similarly as Proposition \ref{delta}, we obtain the
\begin{prop}\label{theta}The function $\theta^{\eps}F$ belongs to $\mathcal{B}(E,\LL)$ with
\begin{equation}\label{thetanorme}
\|\theta^{\eps}F\|_{\mathcal{B}(E,\LL)}\lesssim \|\varphi\|_{L^2_x}.
\end{equation} It satisfies
\begin{equation}\label{thetaeq}
(L-\eps A+M)(\theta^{\eps}F)=-\theta(n)\varphi F,
\end{equation}
with 
\begin{equation}\label{thetanormeM}
\|M\theta^{\eps}F\|_{\mathcal{B}(E,\LL)}\lesssim \|\varphi \|_{L^2_x}.
\end{equation}
\end{prop}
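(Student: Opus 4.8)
The plan is to reproduce the proof of Proposition \ref{delta} almost verbatim, working term by term on the double series defining $\theta^{\eps}F$. Writing
$$\alpha_{i,j}(x,v,n):=\int_0^{+\infty}e^{tM}\theta_{i,j}(n)\,g(t,\eta_i\eta_j\varphi F)(x+\eps vt,v)\,\dd t,$$
so that $\theta^{\eps}F=\sum_{i,j\in\N}\alpha_{i,j}$, the only structural differences with the corrector $\delta^{\eps}$ are that the scalar coefficient $n_i(n)$ is replaced by $\theta_{i,j}(n)$ and that $\eta_i\chi^{\eps}F$ is replaced by $\eta_i\eta_j\varphi F$. Crucially, $\theta_{i,j}$ is centered by construction (it is of the form $\widehat{g}$), so the exponential decay estimate $(\ref{decroiss})$ applies to $e^{tM}\theta_{i,j}$, and $(\ref{borneM1})$ provides $\|\theta_{i,j}\|_{\infty}\leq K$ and $\|M\theta_{i,j}\|_{\infty}\leq K$, which are exactly the inputs used for $n_i$ before.

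For the norm bound $(\ref{thetanorme})$, I would fix $i,j\in\N$ and $n\in E$ and estimate $\|\alpha_{i,j}(\cdot,\cdot,n)\|^2$ exactly as in the opening computation of Proposition \ref{delta}: bound $|e^{tM}\theta_{i,j}(n)|\leq Ke^{-\mu t}$ by $(\ref{decroiss})$ and $(\ref{borneM1})$, apply Cauchy--Schwarz in $t$, use the contraction property $(\ref{contract})$ of $g(t,\cdot)$, and finally $\|\eta_i\eta_j\varphi F\|\leq\|\eta_i\|_{W^{1,\infty}}\|\eta_j\|_{W^{1,\infty}}\|\varphi F\|$. This yields $\|\alpha_{i,j}\|_{\mathcal{B}(E,\LL)}\lesssim\|\eta_i\|_{W^{1,\infty}}\|\eta_j\|_{W^{1,\infty}}\|\varphi\|_{L^2_x}$, and since $\sum_{i,j}\|\eta_i\|_{W^{1,\infty}}\|\eta_j\|_{W^{1,\infty}}=S^2<\infty$ the double series converges absolutely in $\mathcal{B}(E,\LL)$, giving $(\ref{thetanorme})$. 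The bound $(\ref{thetanormeM})$ then follows from the same estimate applied to $\beta_{i,j}:=\int_0^{+\infty}e^{tM}M\theta_{i,j}(n)\,g(t,\eta_i\eta_j\varphi F)(x+\eps vt,v)\,\dd t$, using $\|M\theta_{i,j}\|_{\infty}\leq K$ in place of $\|\theta_{i,j}\|_{\infty}\leq K$.

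For the resolvent identity $(\ref{thetaeq})$, I would first show $\alpha_{i,j}\in\mathrm{D}(M)$ with $M\alpha_{i,j}=\beta_{i,j}$, by the same argument as in Proposition \ref{delta}: estimate the difference quotient using Lemma \ref{ehmmoinsiborne} applied to $\theta_{i,j}$ (legitimate since $\theta_{i,j}\in\mathrm{D}(M)$ by $(\ref{dansdomaine})$ and $\|M\theta_{i,j}\|_{\infty}\leq K$), then pass to the sum by dominated convergence. Next, an integration by parts in $t$, with $t\mapsto e^{tM}\theta_{i,j}(n)$ in $W^{1,1}$ (derivative $Me^{tM}\theta_{i,j}$, using $(\ref{decroiss})$) and $t\mapsto g(t,\eta_i\eta_j\varphi F)(x+\eps vt,v)$ in $W^{1,\infty}$ (derivative $Lg+\eps v\cdot\nabla_x g$), produces the boundary term $-\theta_{i,j}(n)\eta_i\eta_j\varphi F(x,v)$ at $t=0$ (the contribution at $+\infty$ vanishing by decay) and identifies the remaining integral with $-L\alpha_{i,j}+\eps A\alpha_{i,j}$. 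Hence $(L-\eps A+M)\alpha_{i,j}=-\theta_{i,j}(n)\eta_i\eta_j\varphi F$ in $\LL$. Summing over $i,j$ and invoking $\theta=\sum_{i,j}\theta_{i,j}\eta_i\eta_j$, the boundedness of $L$, the absolute convergence of the series, and the closedness of $A$ exactly as before, I obtain $(L-\eps A+M)(\theta^{\eps}F)=-\theta(n)\varphi F$.

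The main obstacle, compared with Proposition \ref{delta}, is purely bookkeeping: the coefficients now carry two indices, so every convergence statement must be upgraded from a single sum controlled by $S$ to a double sum controlled by $S^2$, and one must check that $M$, $L$ and $A$ may each be commuted past the double series and the $t$-integral. All the analytic inputs ($(\ref{decroiss})$, $(\ref{contract})$, $(\ref{borneM1})$, Lemma \ref{ehmmoinsiborne}, the integration by parts) are identical to the $\delta^{\eps}$ case; the simplification here is that $\varphi$ appears directly instead of $\chi^{\eps}$, so no analogue of the refined estimates $(\ref{deltaestimate})$--$(\ref{deltabonus})$ is needed.
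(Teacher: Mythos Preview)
Your proposal is correct and follows exactly the approach the paper indicates: the paper gives no separate proof for Proposition \ref{theta}, stating only ``similarly as Proposition \ref{delta}, we obtain the'' result, and your write-up carries out precisely that transposition with the double-index bookkeeping and the replacement of $n_i$ by the centered $\theta_{i,j}$.
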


\subsubsection{The corrector $\zeta^{\eps}$}

We set, for all $(f,n)\in \LL\times E$,
$$ \xi^{\eps}(f,n)=(f,\delta^{\eps}F)n-\int_E(f,\delta^{\eps}F)n\dd\nu(n),$$
and, for $i\in\N$, $\xi^{\eps}_i=(f,\delta^{\eps}F)n_i$. We then define the function $\zeta^{\eps}:\R^d\times V\times\LL\times E\to \R$ by
$$\zeta^{\eps}(x,v,f,n)F(v):=\int_0^{+\infty}e^{tM}g(t,\xi^{\eps}(f,n)\varphi F)(x+\eps vt,v)\,\dd t.$$
Similarly as Proposition $\ref{delta}$, we have the
\begin{prop}\label{zeta}Let $f\in\LL$ be fixed. The function $\zeta^{\eps}F(f)$ belongs to $\mathcal{B}(E,\LL)$ with
\begin{equation}\label{zetanorme}
\|\zeta^{\eps}F(f)\|_{\mathcal{B}(E,\LL)}\lesssim \|f\|\|\varphi \|^2_{L^2_x}.
\end{equation} It satisfies
\begin{equation}\label{zetaeq}
(L-\eps A+M)(\zeta^{\eps}F(f))=-\xi^{\eps}(f,n)\varphi F,
\end{equation}
with 
\begin{equation}\label{zetanormeM}
\|M\zeta^{\eps}F(f)\|_{\mathcal{B}(E,\LL)}\lesssim \|f\|\|\varphi \|^2_{L^2_x}.
\end{equation}
Note that $f\mapsto\zeta^{\eps}F(f)$ is linear. Furthermore, we have for all $f\in\mathrm{D}(A)$,
\begin{equation}\label{zetabonus}
\|\zeta^{\eps}(Lf+\eps Af,\cdot)F\|_{\mathcal{B}(E,\LL)} \lesssim \|f\|\|\varphi\|_{L^2_x}\left(C_{\lambda}\|\nabla_x\varphi\|_{L^2_x}\eps+\|\varphi\|_{L^2_x}\lambda\right).
\end{equation}
\end{prop}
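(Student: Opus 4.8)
The plan is to repeat the proof of Proposition~\ref{delta} almost verbatim; the only genuinely new feature is the scalar prefactor $(f,\delta^{\eps}F)$ sitting inside the source $\xi^{\eps}(f,n)\varphi F$. The crucial preliminary observation is that, by Cauchy--Schwarz and \eqref{deltanorme},
$$
|(f,\delta^{\eps}F)(n)|\le\|f\|\,\|\delta^{\eps}F\|_{\mathcal{B}(E,\LL)}\lesssim\|f\|\,\|\varphi\|_{L^2_x},
$$
uniformly in $n\in E$. Since every $n$ lies in the ball $\mathrm{B}(0,KS)$ of $W^{1,\infty}(\R^d)$, the centered source obeys $\|\xi^{\eps}(f,n)\varphi F\|\lesssim\|f\|\,\|\varphi\|^2_{L^2_x}$, and feeding this into the exact chain of inequalities proving \eqref{deltanorme} --- the decay \eqref{decroiss} of $e^{tM}$, Cauchy--Schwarz in $t$ and the contraction \eqref{contract} of $g(t,\cdot)$ --- gives \eqref{zetanorme}. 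Linearity of $f\mapsto\zeta^{\eps}F(f)$ is immediate, as $\xi^{\eps}(f,n)$ depends on $f$ only through the linear pairing $f\mapsto(f,\delta^{\eps}F)$ and the linear centering operation.

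For \eqref{zetaeq} and \eqref{zetanormeM} I would again mirror Proposition~\ref{delta}, expanding the source in the $\eta_i$-basis so that the relevant (centered) coefficients are built from $\xi^{\eps}_i=(f,\delta^{\eps}F)n_i$. Because $\delta^{\eps}F\in\mathrm{D}(M)$ with the bound \eqref{deltanormeM}, the scalar $(f,\delta^{\eps}F)$ lies in $\mathrm{D}(M)$ with $M(f,\delta^{\eps}F)=(f,M\delta^{\eps}F)$; substituting the defining series of $\delta^{\eps}F$ shows that each $\xi^{\eps}_i$ is assembled from products of the form $[e^{tM}n_j]\,n_i$, that is $e^{tM}n_je^{sM}n_i$ at $s=0$. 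These are precisely the quantities assumed to lie in $\mathrm{D}(M)$ in \eqref{dansdomaine}, with the integrable $M$-bound \eqref{borneM2}. Granting this, the termwise integration by parts in $t$ performed in Proposition~\ref{delta} reconstitutes the operator $L-\eps A+M$ and the boundary source term, yielding \eqref{zetaeq}, while $Me^{tM}=e^{tM}M$ together with \eqref{borneM2} gives \eqref{zetanormeM}.

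The one step needing a genuinely new idea, and the part I expect to be the main obstacle, is the refined estimate \eqref{zetabonus}. Here I would replace $f$ by $Lf+\eps Af$ and use the self-adjointness of $L$ and skew-adjointness of $A$ to transfer the operator onto $\delta^{\eps}F$:
$$
(Lf+\eps Af,\delta^{\eps}F)=(f,(L-\eps A)\delta^{\eps}F).
$$
The auxiliary equation \eqref{deltaeq} gives $(L-\eps A)\delta^{\eps}F=-n\chi^{\eps}F-M\delta^{\eps}F$, so that, by Cauchy--Schwarz and the bonus estimate \eqref{deltabonus},
$$
|(Lf+\eps Af,\delta^{\eps}F)|=|(f,n\chi^{\eps}F+M\delta^{\eps}F)|\le\|f\|\,\|M\delta^{\eps}F+n\chi^{\eps}F\|_{\mathcal{B}(E,\LL)}\lesssim\|f\|\bigl(C_{\lambda}\|\nabla_x\varphi\|_{L^2_x}\eps+\|\varphi\|_{L^2_x}\lambda\bigr).
$$
Thus the scalar prefactor driving $\zeta^{\eps}(Lf+\eps Af,\cdot)$ is of size $\eps+\lambda$ instead of $O(1)$; inserting this improved bound on $\|\xi^{\eps}(Lf+\eps Af,n)\varphi F\|$ into the resolvent estimate already used for \eqref{zetanorme} produces the extra factor $\bigl(C_{\lambda}\|\nabla_x\varphi\|_{L^2_x}\eps+\|\varphi\|_{L^2_x}\lambda\bigr)$ and gives \eqref{zetabonus}. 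The remaining care --- keeping the $\LL$-valued Bochner integrals and the interchange of $M$ with the $t$-integral rigorous via Lemma~\ref{ehmmoinsiborne} and dominated convergence, and checking that \eqref{dansdomaine}--\eqref{borneM2} really cover the coefficients $\xi^{\eps}_i$ --- is routine and follows Proposition~\ref{delta} line by line.
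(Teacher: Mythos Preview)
Your proposal is correct and follows essentially the same route as the paper. The paper only writes out \eqref{zetanormeM} and \eqref{zetabonus} explicitly (relegating \eqref{zetanorme} and \eqref{zetaeq} to the analogy with Proposition~\ref{delta}), and for \eqref{zetabonus} it performs exactly your computation: transfer $L+\eps A$ onto $\delta^{\eps}F$ by adjointness, invoke \eqref{deltaeq} to produce $-(M\delta^{\eps}F+n\chi^{\eps}F)$, and conclude via \eqref{deltabonus}.
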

\begin{proof}
We will only prove $(\ref{zetanormeM})$ and $(\ref{zetabonus})$. For the former, we write for $i\in\N$ and $(f,n)\in\LL\times E$,
\begin{align*}
M\xi_i^{\eps}(f,n)&=M(f,\delta^{\eps}(n)F)n_i(n)-\int_EM(f,\delta^{\eps}(n)F)n_i(n)\dd\nu(n)\\
&=\sum_{j=0}^{+\infty}\int_0^{+\infty}\!\!\!\!\!\!\!\!Mn_i(n)e^{tM}n_j(n)(f,g(t,\eta_j\chi^{\eps}F)F)\dd t\\
&\quad -\int_E\sum_{j=0}^{+\infty}\int_0^{+\infty}\!\!\!\!\!\!\!\!Mn_i(n)e^{tM}n_j(n)(f,g(t,\eta_j\chi^{\eps}F)F)\dd t\dd\nu(n),
\end{align*}
so that, with $(\ref{borneM2})$, we have $|M\xi_i^{\eps}(f,n)|\lesssim \|f\|\|\varphi\|_{L^2_x}$. With the definition of $\zeta^{\eps}$, it is now easy to obtain $(\ref{zetanormeM})$.\\
For $(\ref{zetabonus})$, we fix $i\in\N$ and focus on $\xi^{\eps}_i(f,n)$. We have for all $(f,n)\in\mathrm{D}(A)\times E$, 
\begin{align*}
\xi^{\eps}_i(Lf+\eps Af,n)&=(Lf+\eps Af,\delta^{\eps}(n)F)n_i-\int_E(Lf+\eps Af,\delta^{\eps}(n)F)n_i\dd\nu(n) \\
&=(f,(L-\eps A)[\delta^{\eps}(n)F])n_i-\int_E(f,(L-\eps A)[\delta^{\eps}(n)F])n_i\dd\nu(n) \\
&=-(f,M\delta^{\eps}(n)F+n\chi^{\eps}F)n_i+\int_E(f,M\delta^{\eps}(n)F+n\chi^{\eps}F)n_i\dd\nu(n),
\end{align*}
where we used $(\ref{deltaeq})$.
Thanks to $(\ref{deltabonus})$, we thus obtain that, for all $(f,n)\in\mathrm{D}(A)\times E$, 
$$|\xi^{\eps}_i(Lf+\eps Af,n)|\lesssim \|f\|\left(C_{\lambda}\|\nabla_x\varphi\|_{L^2_x}\eps+\|\varphi\|_{L^2_x}\lambda\right).$$
With the expression of $\zeta^{\eps}$, it is now easy to get the required estimate. This concludes the proof. $\Box$
\end{proof}

\subsection{Definition of the corrections}\label{sectioncorrect}
In this section, we precisely define the corrections of the two test functions $\Psi$ and $|\Psi|^2$ that we derived in a formal way in Subsection \ref{choicetestfunctions}. 

\noindent First, we define a deterministic correction by
$$\Psi^{\eps}_*(f,n):=(f,\chi^{\eps}F),\qquad f\in\LL,\;n\in E.$$

\noindent Then, the stochastic corrections for $\Psi$ are defined by, for $(f,n)\in\LL\times E$,
$$
\left\lbrace\begin{aligned}
&\varphi^{\eps}_1(f,n):=(f,\delta^{\eps}(n)F),\\
&\varphi^{\eps}_2(f,n):=(f,\theta^{\eps}(n)F).
\end{aligned}
\right.
$$

\noindent The stochastic corrections for $|\Psi|^2$ are defined by, for $(f,n)\in\LL\times E$,
$$
\left\lbrace\begin{aligned}
&\phi^{\eps}_1(f,n):=2(f,\chi^{\eps}F)(f,\delta^{\eps}(n)F),\\
&\phi^{\eps}_2(f,n):=2(f,\zeta^{\eps}(f,n)F)+2(f,\chi^{\eps}F)(f,\theta^{\eps}(n)F).
\end{aligned}
\right.
$$
\noindent Finally, the corrections $\Psi^{\eps,1}$ and $\Psi^{\eps,2}$ of $\Psi$ and $|\Psi|^2$ are defined by
$$
\left\lbrace\begin{aligned}
&\Psi^{\eps,1}(f,n):=\Psi^{\eps}_*+\eps^{\frac{\alpha}{2}}\varphi^{\eps}_1+\eps^{\alpha}\varphi^{\eps}_2, \\
&\Psi^{\eps,2}(f,n):=|\Psi^{\eps}_*|^2+\eps^{\frac{\alpha}{2}}\phi^{\eps}_1+\eps^{\alpha}\phi^{\eps}_2. \\
\end{aligned}
\right.
$$

\begin{prop}\label{aregoodtest}
For $i=1,2$ and $(f,n)\in\LL\times E$, we have the following estimates:
\begin{alignat}{2}
\label{boundtest}&\varphi^{\eps}_i(f,n)\lesssim \|f\|\|\varphi\|_{L^2_x},\quad &&\phi^{\eps}_i(f,n)\lesssim \|f\|^2\|\varphi\|^2_{L^2_x},\\
\label{boundtestM}& M\varphi^{\eps}_i(f,n)\lesssim \|f\|\|\varphi\|_{L^2_x},\quad && M\phi^{\eps}_i(f,n)\lesssim \|f\|^2\|\varphi\|^2_{L^2_x}.
\end{alignat}
Furthermore, the functions  $\Psi^{\eps}_*$, $|\Psi^{\eps}_*|^2$, $\varphi^{\eps}_1$, $\varphi^{\eps}_2$, $\phi^{\eps}_1$ and $\phi^{\eps}_2$ are good test functions.
Besides, for $(f,n)\in\LL\times E$,
\begin{equation}\label{Dphi2}
|(f,D\phi^{\eps}_2(f,n))|\lesssim \|f\|^2\|\varphi\|^2_{L^2_x}.
\end{equation}
\end{prop}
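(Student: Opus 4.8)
The plan is to treat the four assertions in turn, reducing everything to the norm estimates already established for the correctors $\chi^{\eps}$, $\delta^{\eps}$, $\theta^{\eps}$ and $\zeta^{\eps}$ in Propositions \ref{chi}, \ref{delta}, \ref{theta} and \ref{zeta}, and to the homogeneity of the test functions in $f$.

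First I would prove the size bounds \eqref{boundtest} and \eqref{boundtestM}. Since $\varphi^{\eps}_1(f,n)=(f,\delta^{\eps}(n)F)$ and $\varphi^{\eps}_2(f,n)=(f,\theta^{\eps}(n)F)$ are linear in $f$, Cauchy--Schwarz in $\LL$ together with \eqref{deltanorme} and \eqref{thetanorme} gives immediately $|\varphi^{\eps}_i(f,n)|\leq\|f\|\,\|\delta^{\eps}F\|_{\mathcal{B}(E,\LL)}\lesssim\|f\|\,\|\varphi\|_{L^2_x}$, and similarly for $\theta^{\eps}$. For the quadratic functions I apply Cauchy--Schwarz to each factor: for $\phi^{\eps}_1$ I combine \eqref{chinorme} and \eqref{deltanorme}, while for $\phi^{\eps}_2$ I bound $|(f,\zeta^{\eps}(f,n)F)|\leq\|f\|\,\|\zeta^{\eps}F(f)\|_{\mathcal{B}(E,\LL)}\lesssim\|f\|^2\|\varphi\|^2_{L^2_x}$ by \eqref{zetanorme}, and the second summand by \eqref{chinorme} and \eqref{thetanorme}. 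For \eqref{boundtestM} the key remark is that the bounded linear functional $(f,\cdot)$ commutes with $M$: since $(f,\cdot):\LL\to\R$ is continuous and linear and $\delta^{\eps}F\in\mathrm{D}(M)$ by Proposition \ref{delta}, the difference quotient satisfies $\frac{e^{hM}-I}{h}[(f,\delta^{\eps}(\cdot)F)]=(f,\frac{e^{hM}-I}{h}\delta^{\eps}F)\to(f,M[\delta^{\eps}F])$ uniformly in $n$, so $M\varphi^{\eps}_1(f,n)=(f,M[\delta^{\eps}F](n))$; the estimates then follow verbatim from \eqref{deltanormeM}, \eqref{thetanormeM} and \eqref{zetanormeM}, only the $n$-dependent factor in each product carrying the action of $M$.

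Next I would check that each of the six functions is a good test function in the sense of Definition \ref{goodtest}. Differentiability in $f$ (property (i)) is immediate because $\Psi^{\eps}_*$, $\varphi^{\eps}_1$, $\varphi^{\eps}_2$ are linear in $f$ and $|\Psi^{\eps}_*|^2$, $\phi^{\eps}_1$, $\phi^{\eps}_2$ are quadratic, recalling that $f\mapsto\zeta^{\eps}(f,n)F$ is linear as noted in Proposition \ref{zeta}; the differentials are affine in $f$, hence continuous in $f$, and the bounds on bounded sets (property (ii)) are exactly the estimates of the previous step. For property (iii) I use again that $(f,\cdot)$ preserves $\mathrm{D}(M)$, so that $\Psi(f,\cdot)\in\mathrm{D}(M)$ for each function. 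The delicate point is the continuity in $n$ required in (ii) and (iv): this reduces to the continuity of $n\mapsto\delta^{\eps}(n)F$, $\theta^{\eps}(n)F$, $\zeta^{\eps}(n)F$ and of their images under $M$ in $\mathcal{B}(E,\LL)$, which I would obtain exactly as in Lemma \ref{continuiteenn}, from the Feller property of $e^{tM}$, the continuity of the coefficients $n_i$ and $\theta_{i,j}$, and dominated convergence using the exponential decay \eqref{decroiss} together with the bounds \eqref{borneM1}--\eqref{borneM2}.

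Finally, for \eqref{Dphi2} I would avoid computing $D\phi^{\eps}_2$ explicitly---which is awkward because the summand $2(f,\zeta^{\eps}(f,n)F)$ carries an $f$ hidden inside $\zeta^{\eps}$---and instead exploit that $\phi^{\eps}_2(\cdot,n)$ is homogeneous of degree two in $f$. Indeed both summands are quadratic in $f$, so $\phi^{\eps}_2(\lambda f,n)=\lambda^2\phi^{\eps}_2(f,n)$, and Euler's identity yields $(f,D\phi^{\eps}_2(f,n))=2\phi^{\eps}_2(f,n)$, the inner product being real and symmetric. The required estimate is then nothing but the bound on $\phi^{\eps}_2$ already obtained in \eqref{boundtest}. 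I expect the continuity in $n$ of the $M$-images in property (iv) to be the only genuinely technical point; the homogeneity observation is what makes \eqref{Dphi2} painless despite the $f$-dependence buried in $\zeta^{\eps}$.
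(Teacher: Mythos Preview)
Your proposal is correct and follows essentially the same route as the paper: Cauchy--Schwarz plus the corrector bounds \eqref{chinorme}, \eqref{deltanorme}, \eqref{deltanormeM}, \eqref{thetanorme}, \eqref{thetanormeM}, \eqref{zetanorme}, \eqref{zetanormeM} for \eqref{boundtest}--\eqref{boundtestM}, and the continuity in $n$ via Lemma \ref{continuiteenn} and dominated convergence for the good-test-function properties. The one cosmetic difference is \eqref{Dphi2}: the paper computes $D\phi^{\eps}_2(f,n)(f)=4(f,\zeta^{\eps}(f,n)F)+4(f,\chi^{\eps}F)(f,\theta^{\eps}(n)F)=2\phi^{\eps}_2(f,n)$ directly, whereas you invoke Euler's identity for degree-two homogeneity---these are the same computation in different clothing.
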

\begin{proof}
Estimates $(\ref{boundtest})$ and $(\ref{boundtestM})$ are justified by Cauchy Schwarz inequality and $(\ref{deltanorme})$, $(\ref{deltanormeM})$, $(\ref{thetanorme})$, $(\ref{thetanormeM})$, $(\ref{zetanorme})$ and $(\ref{zetanormeM})$. \\
Concerning the fact that all the functions cited above are good test functions, we first note that the case of $\Psi^{\eps}_*$ and $|\Psi^{\eps}_*|^2$ is easy to prove. \\
Let us deal with the case of $\varphi^{\eps}_1$. Conditions $(i)$ and $(iii)$ of Definition \ref{goodtest} are obviously verified. For condition $(ii)$, we have to prove that $D\varphi^{\eps}_1(f,n)\equiv \delta^{\eps}(n)F$ is continuous with respect to $(f,n)\in\LL\times E$, i.e. that $n\mapsto \delta^{\eps}(n)F$ is continuous. We recall that $\delta^{\eps}(x,v,n)F(v)=\sum_{i=0}^{+\infty}\alpha_i(x,v,n)$ in $\mathcal{B}(E,\LL)$ where $$\alpha_i(x,v,n):=\int_0^{+\infty}e^{tM}n_i(n)g(t,\eta_i\chi^{\eps}F)(x+\eps vt,v)\,\dd t.$$
Now, $n\mapsto n_i(n)$ is continuous with Lemma \ref{continuiteenn}, and we thus have thanks to $(\ref{nicentres})$,  $(\ref{decroiss})$, $(\ref{borneM1})$ and the dominated convergence theorem that $n\mapsto\alpha_i(n)$ is continuous. Since the series of the $\alpha_i$ defining $\delta^{\eps}F$ converges in $\mathcal{B}(E,\LL)$, we obtain the continuity of $n\mapsto\delta^{\eps}(n)F$. Furthermore, we can show that $(f,n)\mapsto D\varphi^{\eps}_1(f,n)$ maps bounded sets onto bounded sets thanks to $(\ref{deltanorme})$. So condition $(ii)$ is verified. Similarly, by the continuity of $n\mapsto Mn_i(n)$ (Lemma \ref{continuiteenn}) and by $(\ref{boundtestM})$, we prove that condition $(iv)$ is verified. \\
Similarly, we can prove that $\varphi^{\eps}_2$, $\phi^{\eps}_1$ and $\phi^{\eps}_2$ are good test functions. \\
Finally, since $\zeta^{\eps}(f,n)$ is linear in $f$, for $(f,n)\in\LL\times E$,
$$D\phi^{\eps}_2(f,n)(f)=4(f,\zeta^{\eps}(f,n)F)+4(f,\chi^{\eps}F)(f,\theta^{\eps}(n)F),$$
so that $(\ref{chinorme})$, $(\ref{deltanorme})$ and $(\ref{zetanorme})$ gives $(\ref{Dphi2})$. $\Box$
\end{proof}

\begin{prop}\label{isgoodtestsquare}
The function $(f,n)\mapsto|\Psi^{\eps,1}|^2(f,n)$ is a good test function. Furthermore, we have, for all $(f,n)\in\LL\times E$, the following bounds:
\begin{equation}\label{boundMtest}
\left\{
\begin{aligned}
&|M|\varphi_1^{\eps}|^2(f,n)|\lesssim \|f\|^2\|\varphi\|^2_{L^2_x}, \\
&|M[\varphi_1^{\eps}\varphi_2^{\eps}](f,n)|\lesssim \|f\|^2\|\varphi\|^2_{L^2_x}, \\
&|M|\varphi_2^{\eps}|^2(f,n)|\lesssim \|f\|^2\|\varphi\|^2_{L^2_x},
\end{aligned}
\right.
\end{equation}
and 
\begin{equation}\label{boundMMtest}
\eps^{-\alpha}|M|\Psi^{\eps,1}|^2-2\Psi^{\eps,1}M\Psi^{\eps,1}|\lesssim \|f\|^2\|\varphi\|^2_{L^2_x}.
\end{equation}
\end{prop}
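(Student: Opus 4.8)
\noindent The plan is to prove the three estimates $(\ref{boundMtest})$ first, then to read off $(\ref{boundMMtest})$ from a carr\'e-du-champ cancellation, the good-test-function property coming essentially for free. The guiding observation is that $\Psi^{\eps}_*$ does not depend on $n$, so that $M\Psi^{\eps}_*=0$ and $M(\Psi^{\eps}_*g)=\Psi^{\eps}_*Mg$ for every $g$; in particular $M$ annihilates every purely $\Psi^{\eps}_*$ contribution to $|\Psi^{\eps,1}|^2$.

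\noindent For $(\ref{boundMtest})$, I would expand each product as an absolutely convergent double series of double integrals, exactly as in the proof of Proposition \ref{delta}. Recalling that $\varphi^{\eps}_1(f,n)=\sum_i\int_0^{\infty}e^{tM}n_i(n)\,a_i(t)\,\dd t$ and $\varphi^{\eps}_2(f,n)=\sum_{i,j}\int_0^{\infty}e^{tM}\theta_{i,j}(n)\,b_{i,j}(t)\,\dd t$, where the deterministic coefficients are obtained by pairing $f$ against the shifted semigroup images $g(t,\eta_i\chi^{\eps}F)(\cdot+\eps vt,\cdot)$ and $g(t,\eta_i\eta_j\varphi F)(\cdot+\eps vt,\cdot)$, the squares and the product become double integrals over the quantities $e^{tM}n_ie^{sM}n_j$, $e^{tM}n_ie^{sM}\theta_{j,k}$ and $e^{tM}\theta_{i,j}e^{sM}\theta_{k,l}$. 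These lie in $\mathrm{D}(M)$ by $(\ref{dansdomaine})$, and $M$ may be applied term by term by the dominated-convergence argument of Proposition \ref{delta} (using Lemma \ref{ehmmoinsiborne}). The decay $\|M[\cdots]\|_{\infty}\leq Ke^{-\mu(s+t)}$ from $(\ref{borneM2})$ makes each double integral converge, and combined with the uniform coefficient bounds $|a_i(t)|\lesssim\|f\|\|\eta_i\|_{W^{1,\infty}}\|\varphi\|_{L^2_x}$ and $|b_{i,j}(t)|\lesssim\|f\|\|\eta_i\|_{W^{1,\infty}}\|\eta_j\|_{W^{1,\infty}}\|\varphi\|_{L^2_x}$ (Cauchy--Schwarz, shift-invariance of the $\LL$-norm, the contraction $(\ref{contract})$ and $(\ref{chinorme})$), together with $\sum_i\|\eta_i\|_{W^{1,\infty}}=S<\infty$, this yields the three bounds $\lesssim\|f\|^2\|\varphi\|^2_{L^2_x}$.

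\noindent That $|\Psi^{\eps,1}|^2$ is a good test function then follows by checking $(i)$--$(iv)$ of Definition \ref{goodtest}. Differentiability in $f$ and the continuity and boundedness on bounded sets of $D\Psi^{\eps,1}(f,n)=\chi^{\eps}F+\eps^{\frac{\alpha}{2}}\delta^{\eps}(n)F+\eps^{\alpha}\theta^{\eps}(n)F$, hence of $D|\Psi^{\eps,1}|^2=2\Psi^{\eps,1}D\Psi^{\eps,1}$, are already contained in Proposition \ref{aregoodtest}; membership of $n\mapsto|\Psi^{\eps,1}(f,n)|^2$ in $\mathrm{D}(M)$ and the continuity of $M|\Psi^{\eps,1}|^2$ come from the term-by-term computation above together with Lemma \ref{continuiteenn}, and boundedness on bounded sets is exactly $(\ref{boundMtest})$.

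\noindent Finally, for $(\ref{boundMMtest})$, I would set $R:=\eps^{\frac{\alpha}{2}}\varphi^{\eps}_1+\eps^{\alpha}\varphi^{\eps}_2$, so that $\Psi^{\eps,1}=\Psi^{\eps}_*+R$. Since $M\Psi^{\eps}_*=0$ and $M(\Psi^{\eps}_*g)=\Psi^{\eps}_*Mg$, every term carrying a factor $\Psi^{\eps}_*$ cancels, leaving
\begin{equation*}
M|\Psi^{\eps,1}|^2-2\Psi^{\eps,1}M\Psi^{\eps,1}=MR^2-2R\,MR.
\end{equation*}
Expanding $R^2$ and regrouping gives
\begin{align*}
MR^2-2R\,MR&=\eps^{\alpha}\left[M|\varphi^{\eps}_1|^2-2\varphi^{\eps}_1M\varphi^{\eps}_1\right]+\eps^{2\alpha}\left[M|\varphi^{\eps}_2|^2-2\varphi^{\eps}_2M\varphi^{\eps}_2\right]\\
&\quad+2\eps^{\frac{3\alpha}{2}}\left[M(\varphi^{\eps}_1\varphi^{\eps}_2)-\varphi^{\eps}_1M\varphi^{\eps}_2-\varphi^{\eps}_2M\varphi^{\eps}_1\right].
\end{align*}
Multiplying by $\eps^{-\alpha}$ and using $\eps^{\frac{\alpha}{2}},\eps^{\alpha}\lesssim 1$, each bracket is controlled by $\lesssim\|f\|^2\|\varphi\|^2_{L^2_x}$: the $M$-terms by $(\ref{boundMtest})$, and the products $\varphi^{\eps}_iM\varphi^{\eps}_j$ by $(\ref{boundtest})$ and $(\ref{boundtestM})$. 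This is $(\ref{boundMMtest})$. The main obstacle is the rigorous term-by-term action of $M$ on the double integrals of the second paragraph; but this is precisely the machinery already developed for Proposition \ref{delta}, and the hypotheses $(\ref{dansdomaine})$--$(\ref{borneM2})$ were tailored to make it go through.
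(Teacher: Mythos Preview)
Your proposal is correct and follows essentially the same approach as the paper: expand $|\varphi^{\eps}_1|^2$, $\varphi^{\eps}_1\varphi^{\eps}_2$, $|\varphi^{\eps}_2|^2$ as double series of double integrals in the quantities $e^{tM}n_ie^{sM}n_j$, $e^{tM}\theta_{i,j}e^{sM}n_k$, $e^{tM}\theta_{i,j}e^{sM}\theta_{k,l}$, apply $M$ term by term using $(\ref{dansdomaine})$, $(\ref{borneM2})$ and Lemma \ref{ehmmoinsiborne} exactly as in Proposition \ref{delta}, and read off both the good-test-function properties and $(\ref{boundMtest})$; then obtain $(\ref{boundMMtest})$ from $(\ref{boundtest})$, $(\ref{boundtestM})$ and $(\ref{boundMtest})$. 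Your carr\'e-du-champ computation $M|\Psi^{\eps,1}|^2-2\Psi^{\eps,1}M\Psi^{\eps,1}=MR^2-2R\,MR$ with $R=\eps^{\frac{\alpha}{2}}\varphi^{\eps}_1+\eps^{\alpha}\varphi^{\eps}_2$ is exactly what the paper leaves implicit when it simply cites those three estimates, so you have in fact made that last step more transparent.
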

\begin{proof}
In  the expression of $|\Psi^{\eps,1}|^2$, since $\Psi_*^{\eps}$, $\varphi_1^{\eps}$ and $\varphi_2^{\eps}$ are good test functions by Proposition \ref{aregoodtest}, it is easy to prove that $|\Psi_*^{\eps}|^2$, $\Psi_*^{\eps}\varphi_1^{\eps}$ and $\Psi_*^{\eps}\varphi_2^{\eps}$ are also good test functions. It remains to focus on the cases of $|\varphi_1^{\eps}|^2$, $\varphi_1^{\eps}\varphi_2^{\eps}$ and $|\varphi_2^{\eps}|^2$.
We only show the case of $|\varphi_1^{\eps}|^2$ since the others are proved similarly. \\
First, note that point $(i)$ of Definition \ref{goodtest} is clearly verified by $|\varphi_1^{\eps}|^2$ with $D|\varphi_1^{\eps}|^2(f,n)(h)=2(f,\delta^{\eps}(n)F)(h,\delta^{\eps}(n)F)$ and this function of $(f,n)$ maps bounded sets onto bounded sets (thanks to $(\ref{deltanorme})$) and is continuous (is it linear in $f$ and continuous in $n$ since $n\mapsto \delta^{\eps}(n)F$ is continuous, see the proof of Proposition \ref{aregoodtest}). Then we write
\begin{align*}
|\varphi_1^{\eps}|^2(f,n) &= (f,\delta^{\eps}(n)F)^2 = \left(\sum_{i=0}^{+\infty}\int_0^{+\infty}\!\!\!\!\!\!\!\!e^{tM}n_i(n)(f,g(t,\eta_i\chi^{\eps}F)F)\dd t\right)^2 \\
&= \sum_{i,j}\int_0^{\infty}\!\!\!\!\int_0^{\infty}\!\!\!e^{tM}n_i(n)e^{sM}n_j(n)(f,g(t,\eta_i\chi^{\eps}F)F)(f,g(s,\eta_j\chi^{\eps}F)F)\dd t\dd s,
\end{align*}
so that, with $(\ref{dansdomaine})$, $(\ref{borneM2})$ and Lemma \ref{ehmmoinsiborne}, we can mimic the proof of Proposition \ref{delta} to show that $|\varphi_1^{\eps}|^2\in\mathrm{D}(M)$ with
$$
M|\varphi_1^{\eps}|^2(f,n) = \sum_{i,j}\int_0^{\infty}\!\!\!\!\int_0^{\infty}\!\!\!M[e^{tM}n_ie^{sM}n_j](n)(f,g(t,\eta_i\chi^{\eps}F)F)(f,g(s,\eta_j\chi^{\eps}F)F)\dd t\dd s.
$$
Furthermore, with $(\ref{borneM2})$, $(f,n)\mapsto M|\varphi_1^{\eps}|^2(f,n)$ maps bounded sets onto bounded sets (it gives the first bound of $(\ref{boundMtest})$); with $(\ref{continuiteenn})$, $(\ref{borneM2})$ and the dominated convergence theorem, it is continuous with respect to $n$. Since it is linear in $f$ and maps bounded sets onto bounded sets, it is continuous with respect to $(f,n)$. \\
To sum up, we proved that $|\varphi_1^{\eps}|^2(f,n)$ verifies points $(ii)$, $(iii)$ and $(iv)$ of Definition \ref{goodtest}. Finally, we obtain $(\ref{boundMMtest})$ thanks to $(\ref{boundtest})$, $(\ref{boundtestM})$ and $(\ref{boundMtest})$. $\Box$

\end{proof}
\subsection{Convergence to the limit generator}
We first define the limit generator $\mathscr{L}$. For $\psi = \Psi$ or $\psi=|\Psi|^2$, and all $\rho\in L^2(\R^d)$, we set
\begin{multline*}
\mathscr{L}\psi(\rho):=(\rho F,-\kappa(-\Delta)^{\frac{\alpha}{2}}D\psi(\rho F))-\int_E(\rho F nM^{-1}I(n),D\psi(\rho F))\dd\nu(n)\\
-\int_ED^2\psi(\rho F)(\rho Fn,\rho F M^{-1}I(n))\dd\nu(n),
\end{multline*}
and one can easily verify that it is well defined. Then, we state the two results of convergence.
\begin{prop}\label{ordre1}If $(f,n)\in\mathrm{D}(A)\times E$, for any $\lambda>0$, we have the following estimate:
\begin{multline}\label{ordre1dur}
\left|\mathscr{L}^{\eps}\Psi^{\eps,1}(f,n)-\mathscr{L}\Psi(\rho)\right|\lesssim\|f\|\left[\Lambda(\eps)(\|\varphi\|_{L^2_x}+\|D^2\varphi\|_{L^2_x})+C_{\lambda}\|\nabla_x \varphi\|_{L^2_x}\eps\right.\\
\left.+\|\varphi\|_{L^2_x}\eps^{\frac{\alpha}{2}}+(\|\varphi\|_{L^2_x}+\|D^2\varphi\|_{L^2_x})\lambda\right].
\end{multline}
We can also write the right-hand side of the previous bound as
\begin{equation}\label{ordre1simple}
\|f\|(\Lambda(\eps)C_{\varphi,\lambda}+C_{\varphi}\lambda),
\end{equation}
where in each case $\Lambda$ stands for a function which only depends on $\eps$ such that $\Lambda(\eps)\to 0$ when $\eps\to 0$.
\end{prop}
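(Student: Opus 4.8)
The plan is to turn the formal computation of Subsection \ref{choicetestfunctions} into a rigorous estimate. Since each of $\Psi^{\eps}_*$, $\varphi^{\eps}_1$, $\varphi^{\eps}_2$ is linear in $f$, so is $\Psi^{\eps,1}$, whence $D\Psi^{\eps,1}(f,n)=\chi^{\eps}F+\eps^{\frac{\alpha}{2}}\delta^{\eps}(n)F+\eps^{\alpha}\theta^{\eps}(n)F$ and $M\Psi^{\eps,1}(f,n)=\eps^{\frac{\alpha}{2}}(f,M\delta^{\eps}F)+\eps^{\alpha}(f,M\theta^{\eps}F)$, the deterministic piece $\Psi^{\eps}_*$ contributing nothing to $M$ since it is independent of $n$. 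First I would substitute these into the formula for $\mathscr{L}^{\eps}$ and sort the outcome by the powers $\eps^{-\alpha}$, $\eps^{-\frac{\alpha}{2}}$, $\eps^{0}$, $\eps^{\frac{\alpha}{2}}$ of $\eps$, reproducing exactly the grouping \eqref{formal1}--\eqref{formal3}.

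The core of the proof is the exact vanishing of the two singular groups. For the $\eps^{-\frac{\alpha}{2}}$ group I would use that $f\in\mathrm{D}(A)$, the skew-adjointness of $A$ and the self-adjointness of $L$ (legitimate since $\delta^{\eps}F\in\mathrm{D}(A)$ by Proposition \ref{delta}) to move the operators onto the corrector, $(Lf+\eps Af,\delta^{\eps}F)=(f,(L-\eps A)\delta^{\eps}F)$. Adding $(f,M\delta^{\eps}F)$ and invoking the corrector equation \eqref{deltaeq} gives $(f,(L-\eps A+M)\delta^{\eps}F)=-(f,n\chi^{\eps}F)=-(fn,\chi^{\eps}F)$, so that the entire $\eps^{-\frac{\alpha}{2}}$ group cancels identically. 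The same transfer applied to the $\eps^{0}$ terms, now together with \eqref{thetaeq}, collapses them to $(fn,\delta^{\eps}F)-(f,\theta(n)\varphi F)$.

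It then remains to identify the limits and estimate the remainders. The leftover $\eps^{-\alpha}$ term $\eps^{-\alpha}(Lf+\eps Af,\chi^{\eps}F)$ is governed by Lemma \ref{lemmeLaplacien}, which produces $-(\kappa(-\Delta)^{\frac{\alpha}{2}}f,\varphi F)=-\kappa(\rho,(-\Delta)^{\frac{\alpha}{2}}\varphi)_x$ up to an error $\lesssim(\Lambda(\eps)+\lambda)\|f\|(\|\varphi\|_{L^2_x}+\|D^2\varphi\|_{L^2_x})$, which is precisely the fractional Laplacian drift of $\mathscr{L}\Psi(\rho)$. For the $\eps^{0}$ group I would substitute $\theta(n)=H-nM^{-1}I(n)$, read off from \eqref{deftheta} and \eqref{defH}, to obtain $(fn,\delta^{\eps}F)-(f,\theta(n)\varphi F)=-(f,H\varphi F)+(f,n(\delta^{\eps}F+M^{-1}I(n)\varphi F))$; the first piece equals $-(\rho,H\varphi)_x$ and matches the term $-\int_E(\rho FnM^{-1}I(n),\varphi F)\dd\nu(n)$ of $\mathscr{L}\Psi(\rho)$, while the second is bounded, via $\|n\|_{\infty}\le KS$ and estimate \eqref{deltaestimate}, by $\lesssim\|f\|(C_{\lambda}\|\nabla_x\varphi\|_{L^2_x}\eps+\|\varphi\|_{L^2_x}\lambda)$. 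The sole surviving term $\eps^{\frac{\alpha}{2}}(fn,\theta^{\eps}F)$ is controlled by Cauchy--Schwarz and \eqref{thetanorme} by $\lesssim\|f\|\|\varphi\|_{L^2_x}\eps^{\frac{\alpha}{2}}$. Summing the three remainders gives \eqref{ordre1dur}, and absorbing the $\eps$-dependent factors into one function $\Lambda(\eps)$ and the $\lambda$-dependent ones into the constants $C_{\varphi,\lambda}$, $C_{\varphi}$ yields the compact form \eqref{ordre1simple}.

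The step demanding the most care is the identical cancellation of the $\eps^{-\frac{\alpha}{2}}$ and $\eps^{0}$ singular groups, which is the entire purpose of the correctors $\delta^{\eps}$ and $\theta^{\eps}$: it relies on the adjoint transfers being rigorous, that is, on $f$, $\delta^{\eps}F$ and $\theta^{\eps}F$ all belonging to $\mathrm{D}(A)$, so that $(Lf+\eps Af,\cdot)$ may be moved onto the correctors before \eqref{deltaeq} and \eqref{thetaeq} are applied. This is exactly why the hypothesis $f\in\mathrm{D}(A)$ is imposed. The remaining work is careful bookkeeping of the error terms already quantified in Lemma \ref{lemmeLaplacien} and in estimates \eqref{deltaestimate} and \eqref{thetanorme}.
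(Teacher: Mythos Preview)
Your proof is correct and follows essentially the same route as the paper's: compute $\mathscr{L}^{\eps}$ on each of $\Psi^{\eps}_*$, $\varphi^{\eps}_1$, $\varphi^{\eps}_2$, use the adjoint transfers $(Lf+\eps Af,\cdot)=(f,(L-\eps A)\cdot)$ together with the corrector equations \eqref{deltaeq} and \eqref{thetaeq} to kill the singular groups, then bound the three surviving remainders by \eqref{Laplacien}, \eqref{deltaestimate} and \eqref{thetanorme}. Your explicit remark that the adjoint transfer is legitimate because $\delta^{\eps}F,\theta^{\eps}F\in\mathrm{D}(A)$ (established inside the proof of Proposition~\ref{delta}) is a welcome addition that the paper leaves implicit.
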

\begin{proof}
We recall that, thanks to Proposition \ref{aregoodtest}, $\Psi^{\eps}_*$, $\varphi_1^{\eps}$ and $\varphi_2^{\eps}$ are good test functions. Then, we compute:
$$\mathscr{L}^{\eps}\Psi^{\eps}_*(f,n)=\frac{1}{\eps^{\alpha}}(Lf+\eps Af,\chi^{\eps}F)+\frac{1}{\eps^{\frac{\alpha}{2}}}(fn,\chi^{\eps}F),$$
where we used the fact that $M\Psi^{\eps}_*(f,n)=0$ since $\Psi^{\eps}_*$ does not depend on $n$. We also have 
\begin{align*}\eps^{\frac{\alpha}{2}}\mathscr{L}^{\eps}\varphi_1^{\eps}(f,n)&=\frac{1}{\eps^{\frac{\alpha}{2}}}(Lf+\eps Af,\delta^{\eps}(n)F)+(fn,\delta^{\eps}(n)F)+\frac{1}{\eps^{\frac{\alpha}{2}}}(f,M\delta^{\eps}(n)F)\\
&=\frac{1}{\eps^{\frac{\alpha}{2}}}(f,(L-\eps A+M)[\delta^{\eps}(n)F])+(fn,\delta^{\eps}(n)F),
\end{align*}
where we used the fact that $L$ (resp. $A$) is auto-adjoint (resp. skew-adjoint) and due to the equation verified by $\delta^{\eps}F$ $(\ref{deltaeq})$, we are led to
$$\eps^{\frac{\alpha}{2}}\mathscr{L}^{\eps}\varphi_1^{\eps}(f,n)=-\frac{1}{\eps^{\frac{\alpha}{2}}}(fn,\chi^{\eps}F)+(fn,\delta^{\eps}(n)F).$$
Furthermore, we have
$$\eps^{\alpha}\mathscr{L}^{\eps}\varphi_2^{\eps}(f,n)=(f,(L-\eps A+M)[\theta^{\eps}(n)F])+\eps^{\frac{\alpha}{2}}(fn,\theta^{\eps}(n)F),$$
that we rewrite, thanks to the equation verified by $\theta^{\eps}F$ $(\ref{thetaeq})$, as
$$\eps^{\alpha}\mathscr{L}^{\eps}\varphi_2^{\eps}(f,n)=-(f,\theta(n)\varphi F)+\eps^{\frac{\alpha}{2}}(fn,\theta^{\eps}(n)F).$$
To sum up, $\mathscr{L}^{\eps}\Psi^{\eps,1}(f,n)=\mathscr{L}^{\eps}\Psi^{\eps}_*(f,n)+\eps^{\frac{\alpha}{2}}\mathscr{L}^{\eps}\varphi_1^{\eps}(f,n)+\eps^{\alpha}\mathscr{L}^{\eps}\varphi_2^{\eps}(f,n)$, hence 
\begin{align*}
\mathscr{L}^{\eps}\Psi^{\eps,1}(f,n)&=\frac{1}{\eps^{\alpha}}(Lf+\eps Af,\chi^{\eps}F)+(fn,\delta^{\eps}(n)F)-(f,\theta(n)\varphi F)+\eps^{\frac{\alpha}{2}}(fn,\theta^{\eps}(n)F)\\
&=\frac{1}{\eps^{\alpha}}(\eps Af+Lf,\chi^{\eps}F)-\int_E(fnM^{-1}I(n),\varphi F)\dd\nu(n)\\
&\qquad\qquad\qquad +(fn,(\delta^{\eps}(n)F+M^{-1}I(n)\varphi F))+\eps^{\frac{\alpha}{2}}(fn,\theta^{\eps}(n)F).
\end{align*}
We point out that $D^2\Psi(f)\equiv 0$ and $(f\psi_1,\psi_2 F)=(\rho F \psi_1,\psi_2 F)$ if $\psi_1$ and $\psi_2$ do not depend on $v\in V$ so that we have
\begin{multline*}
|\mathscr{L}^{\eps}\Psi^{\eps}(f,n)-\mathscr{L}\Psi(\rho)|\leq |\eps^{-\alpha}(\eps Af+Lf,\chi^{\eps}F)+(\kappa(-\Delta)^{\frac{\alpha}{2}}f,\varphi F)|\\+|(fn,(\delta^{\eps}(n)F+M^{-1}I(n)\varphi F))|+\eps^{\frac{\alpha}{2}}|(fn,\theta^{\eps}(n)F)|.
\end{multline*}
We recall that, for all $n\in E$, $\|n\|_{W^{1,\infty}}\lesssim 1$ so that 
$$\left\{\begin{array}{l}
|(fn,(\delta^{\eps}(n)F+M^{-1}I(n)\varphi F))|\lesssim \|f\|\|\delta^{\eps}F+M^{-1}I\varphi F\|_{\mathcal{B}(E,\LL)},\\
|(fn,\theta^{\eps}(n)F)|\lesssim \|f\|\|\theta^{\eps}F\|_{\mathcal{B}(E,\LL)}.
\end{array}\right.$$
Then the bounds $(\ref{Laplacien})$, $(\ref{deltaestimate})$ and $(\ref{thetanorme})$ immediately give the result; this concludes the proof. $\Box$
\end{proof}
\begin{prop}\label{ordre2}If $(f,n)\in\mathrm{D}(A)\times E$, for any $\lambda>0$, we have the following estimate:
$$|\mathscr{L}^{\eps}\Psi^{\eps,2}(f,n)-\mathscr{L}|\Psi|^2(\rho)|\lesssim \Lambda(\eps)C_{\varphi,\lambda}\|f\|^2+C_{\varphi}\|f\|^2\lambda,$$
for a certain function $\Lambda$, which only depends on $\eps$, such that $\Lambda(\eps)\to 0$ when $\eps\to 0$.
\end{prop}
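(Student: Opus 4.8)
The plan is to follow the route of Proposition \ref{ordre1}, but for the quadratic test function; the genuinely new feature is that $D^2|\Psi|^2\not\equiv0$, so a true second-order (diffusion) term survives in the limit, and it is precisely this term that the corrector $\zeta^{\eps}$ has been built to produce. I would start from the decomposition
$$\mathscr{L}^{\eps}\Psi^{\eps,2}=\mathscr{L}^{\eps}|\Psi^{\eps}_*|^2+\eps^{\frac{\alpha}{2}}\mathscr{L}^{\eps}\phi^{\eps}_1+\eps^{\alpha}\mathscr{L}^{\eps}\phi^{\eps}_2,$$
expanding each term with the explicit formula for $\mathscr{L}^{\eps}$ and the gradients $D|\Psi^{\eps}_*|^2=2(f,\chi^{\eps}F)\chi^{\eps}F$, $D\phi^{\eps}_1=2(f,\delta^{\eps}F)\chi^{\eps}F+2(f,\chi^{\eps}F)\delta^{\eps}F$, and the bilinear-in-$f$ gradient of $\phi^{\eps}_2$ coming from $\zeta^{\eps}(f,n)$.

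First I would dispose of the singular terms. The $\eps^{-\frac{\alpha}{2}}$ contributions come only from $|\Psi^{\eps}_*|^2$ and from the $(fn,\cdot)$ piece of $\eps^{\frac{\alpha}{2}}\mathscr{L}^{\eps}\phi^{\eps}_1$; using the self-adjointness of $L$, the skew-adjointness of $A$ and the equation \eqref{deltaeq}, the identity $(Lf+\eps Af,\delta^{\eps}F)=-(fn,\chi^{\eps}F)-(f,M\delta^{\eps}F)$ lets the $M\delta^{\eps}F$ part cancel $\eps^{-\alpha}M\phi^{\eps}_1$ and produces exactly $-\eps^{-\frac{\alpha}{2}}2(f,\chi^{\eps}F)(fn,\chi^{\eps}F)$, which annihilates the singular term coming from $|\Psi^{\eps}_*|^2$. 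The same adjointness-plus-auxiliary-equation mechanism, now with \eqref{thetaeq} and \eqref{zetaeq}, makes all remaining $M$-terms cancel the corresponding $(Lf+\eps Af,\cdot)$-terms inside $\eps^{\alpha}\mathscr{L}^{\eps}\phi^{\eps}_2$.

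After these cancellations the surviving $O(1)$ quantity organizes into a fractional part, a drift part, and a diffusion part. The fractional part $\eps^{-\alpha}2(f,\chi^{\eps}F)(Lf+\eps Af,\chi^{\eps}F)$ converges, by Lemma \ref{lemmeLaplacien}, to $(\rho F,-\kappa(-\Delta)^{\frac{\alpha}{2}}D|\Psi|^2(\rho F))$. Applying $\mathscr{L}^{\eps}$ to $\eps^{\frac{\alpha}{2}}\phi^{\eps}_1$ leaves two $O(1)$ drift contributions, $2(f,\delta^{\eps}F)(fn,\chi^{\eps}F)$ and $2(f,\chi^{\eps}F)(fn,\delta^{\eps}F)$, and the whole role of $\phi^{\eps}_2$ is to absorb their $n$-dependence. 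The second one pairs with the $\theta^{\eps}$ piece $\phi^{\eps}_{2,b}:=2(f,\chi^{\eps}F)(f,\theta^{\eps}F)$: by \eqref{thetaeq} the latter reduces to $-2(f,\chi^{\eps}F)(f\theta(n),\varphi F)$ with $\theta(n)=H-nM^{-1}I(n)$, and since $\delta^{\eps}F\to-M^{-1}I(n)\varphi F$ by \eqref{deltaestimate} the $nM^{-1}I(n)$-pieces cancel, leaving $-2(f,\chi^{\eps}F)(fH,\varphi F)$, i.e. the drift term $-\int_E(\rho FnM^{-1}I(n),D|\Psi|^2(\rho F))\dd\nu(n)$. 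The first one pairs with the $\zeta^{\eps}$ piece $\phi^{\eps}_{2,a}:=2(f,\zeta^{\eps}(f,n)F)$: by \eqref{zetaeq} and adjointness $\phi^{\eps}_{2,a}$ contributes $-2(f\xi^{\eps}(f,n),\varphi F)$, up to the term involving $\zeta^{\eps}(Lf+\eps Af,\cdot)$, which is controlled by \eqref{zetabonus}; writing out $\xi^{\eps}(f,n)=(f,\delta^{\eps}F)n-\int_E(f,\delta^{\eps}F)n\,\dd\nu(n)$, its $n$-dependent piece cancels $2(f,\delta^{\eps}F)(fn,\chi^{\eps}F)$ (up to a $(\chi^{\eps}-\varphi)$-error handled by \eqref{chiestimate}), and the mean-subtracted remainder converges to the second-order term $-\int_ED^2|\Psi|^2(\rho F)(\rho Fn,\rho FM^{-1}I(n))\dd\nu(n)$.

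Finally I would gather all remainders: the $\eps^{\frac{\alpha}{2}}$-order leftovers $\eps^{\frac{\alpha}{2}}(fn,\theta^{\eps}F)$ and $\eps^{\frac{\alpha}{2}}(fn,D\phi^{\eps}_2)$, the $\Lambda(\eps)$-term from Lemma \ref{lemmeLaplacien}, and the $(C_\lambda\eps+\lambda)$-errors from \eqref{chiestimate}, \eqref{deltaestimate}, \eqref{deltabonus} and \eqref{zetabonus}; using \eqref{boundtest}, \eqref{boundtestM}, \eqref{Dphi2}, the bound $\|n\|_{W^{1,\infty}}\lesssim1$ and Cauchy--Schwarz, each is $\lesssim(\Lambda(\eps)C_{\varphi,\lambda}+C_\varphi\lambda)\|f\|^2$, which is the announced estimate. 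The main obstacle is entirely the second-order term: contrary to Proposition \ref{ordre1}, where $D^2\Psi\equiv0$, one must handle the bilinear corrector $\zeta^{\eps}$ and disentangle its contribution into the $n$-dependent part, which cancels against the drift contribution of $\phi^{\eps}_1$, and the mean-subtracted part, which becomes the diffusion coefficient $Q$ of the limiting equation \eqref{stochasticeq}. Keeping the two independent mean-subtractions separate — one through $\theta^{\eps}$ giving the drift, one through $\zeta^{\eps}$ giving the It\^o term — while preserving the $\|f\|^2$ homogeneity throughout, is the delicate part of the bookkeeping.
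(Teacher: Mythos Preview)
Your approach is essentially the paper's: expand $\mathscr{L}^{\eps}\Psi^{\eps,2}=\mathscr{L}^{\eps}|\Psi^{\eps}_*|^2+\eps^{\alpha/2}\mathscr{L}^{\eps}\phi^{\eps}_1+\eps^{\alpha}\mathscr{L}^{\eps}\phi^{\eps}_2$, use the auxiliary equations \eqref{deltaeq}, \eqref{thetaeq}, \eqref{zetaeq} together with self-/skew-adjointness to kill the singular pieces, and then match the surviving $O(1)$ terms to $\mathscr{L}|\Psi|^2(\rho)$; the paper just packages the residuals as an explicit list $\tau_1,\dots,\tau_9$ rather than the fractional/drift/diffusion narrative you give, but the content is the same.

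One bookkeeping omission: when you differentiate the bilinear correctors, the ``cross'' pieces where $Lf+\eps Af$ hits the $\chi^{\eps}$-factor survive, namely
\[
\tau_6:=\frac{2}{\eps^{\alpha/2}}(Lf+\eps Af,\chi^{\eps}F)(f,\delta^{\eps}(n)F)
\quad\text{and}\quad
\tau_7:=2(Lf+\eps Af,\chi^{\eps}F)(f,\theta^{\eps}(n)F),
\]
and your sentence ``applying $\mathscr{L}^{\eps}$ to $\eps^{\alpha/2}\phi^{\eps}_1$ leaves two $O(1)$ drift contributions'' drops $\tau_6$ (and $\tau_7$ is never mentioned). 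Neither is dangerous---you write each as $\eps^{\alpha/2}$ (resp.\ $\eps^{\alpha}$) times $\eps^{-\alpha}(Lf+\eps Af,\chi^{\eps}F)$, which is $O(1)$ by Lemma~\ref{lemmeLaplacien}, and multiply by \eqref{deltanorme}, \eqref{thetanorme}---but they must appear in the remainder list. Also, your leftover ``$\eps^{\alpha/2}(fn,\theta^{\eps}F)$'' is not a separate term; it is already inside $\eps^{\alpha/2}(fn,D\phi^{\eps}_2)$.
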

\begin{proof}
We recall that, thanks to Proposition \ref{aregoodtest}, $|\Psi^{\eps}_*|^2$, $\phi_1^{\eps}$ and $\phi_2^{\eps}$ are good test functions. Then, we compute:
$$\mathscr{L}^{\eps}|\Psi^{\eps}_*|^2(f,n)=\frac{2}{\eps^{\alpha}}(L+\eps Af,\chi^{\eps}F)(f,\chi^{\eps}F)+\frac{2}{\eps^{\frac{\alpha}{2}}}(fn,\chi^{\eps}F)(f,\chi^{\eps}F),$$
where we used the fact that $M|\Psi^{\eps}_*|^2(f,n)=0$ since $\Psi^{\eps}_*$ does not depend on $n$. We also have, with the fact that $D\varphi_1(f)(h)=2(h,\chi^{\eps}F)(f,\delta^{\eps}(n)F)+2(h,\delta^{\eps}(n)F)(f,\chi^{\eps}F)$, 
\begin{align*}\eps^{\frac{\alpha}{2}}\mathscr{L}^{\eps}\phi_1^{\eps}(f,n)&=\frac{2}{\eps^{\frac{\alpha}{2}}}(L+\eps Af,\chi^{\eps}F)(f,\delta^{\eps}(n)F)+\frac{2}{\eps^{\frac{\alpha}{2}}}(L+\eps Af,\delta^{\eps}(n)F)(f,\chi^{\eps}F)\\
&\!\!\!\!\!\!\!\!\!\!\!\!\!\!\!\!\!\!\!\!\!\!\!\!\!+2(fn,\chi^{\eps}F)(f,\delta^{\eps}(n)F)+2(fn,\delta^{\eps}(n)F)(f,\chi^{\eps}F)+\frac{2}{\eps^{\frac{\alpha}{2}}}(f,M\delta^{\eps}(n)F)(f,\chi^{\eps}F)\\
&\!\!\!\!\!\!\!\!\!\!\!\!\!\!\!=\frac{2}{\eps^{\frac{\alpha}{2}}}(L+\eps Af,\chi^{\eps}F)(f,\delta^{\eps}(n)F)+\frac{2}{\eps^{\frac{\alpha}{2}}}(f,(L-\eps A+M)[\delta^{\eps}(n)F])(f,\chi^{\eps}F)\\
&\qquad +2(fn,\chi^{\eps}F)(f,\delta^{\eps}(n)F)+2(fn,\delta^{\eps}(n)F)(f,\chi^{\eps}F).
\end{align*}
Thanks to the equation satisfied by $\delta^{\eps}F$ $(\ref{deltaeq})$, we finally get
\begin{multline*}\eps^{\frac{\alpha}{2}}\mathscr{L}^{\eps}\phi^{\eps}_1(f,n)=\frac{2}{\eps^{\frac{\alpha}{2}}}(L+\eps Af,\chi^{\eps}F)(f,\delta^{\eps}(n)F)-\frac{2}{\eps^{\frac{\alpha}{2}}}(fn,\chi^{\eps} F)(f,\chi^{\eps}F)\\
+2(fn,\chi^{\eps}F)(f,\delta^{\eps}(n)F)+2(fn,\delta^{\eps}(n)F)(f,\chi^{\eps}F).
\end{multline*}
Besides, we have
\begin{multline*}\eps^{\alpha}\mathscr{L}^{\eps}\phi_2^{\eps}(f,n)=2(f,(L-\eps A+M)[\zeta^{\eps}(f,n)F])+2(f,(L-\eps A+M)[\theta^{\eps}(n)F])(f,\chi^{\eps}F)\\
+2(Lf+\eps Af,\chi^{\eps}F)(f,\theta^{\eps}(n)F)+2(f,\zeta^{\eps}(Lf+\eps Af,n)F)+\eps^{\frac{\alpha}{2}}(fn,D\phi^{\eps}_2(f,n)),
\end{multline*}
that is, due to equations verified by $\theta^{\eps}F$ and $\zeta^{\eps}F$ $(\ref{thetaeq})$ and $(\ref{zetaeq})$,
\begin{multline*}\eps^{\alpha}\mathscr{L}^{\eps}\phi^{\eps}_2(f,n)=-2(f,\xi^{\eps}\varphi F)-2(f,\theta(n)\varphi F)(f,\chi^{\eps}F)\\
+2(Lf+\eps Af,\chi^{\eps}F)(f,\theta^{\eps}(n)F)+2(f,\zeta^{\eps}(Lf+\eps Af,n)F)+\eps^{\frac{\alpha}{2}}(fn,D\phi^{\eps}_2(f,n)).
\end{multline*}
To sum up, $\mathscr{L}^{\eps}\Psi^{\eps,2}(f,n)=\mathscr{L}^{\eps}|\Psi^{\eps}_*|^2(f,n)+\eps^{\frac{\alpha}{2}}\mathscr{L}^{\eps}\phi^{\eps}_1(f,n)+\eps^{\alpha}\mathscr{L}^{\eps}\phi^{\eps}_2(f,n)$, hence
\begin{multline*}
\mathscr{L}^{\eps}\Psi^{\eps,2}(f,n)=\frac{2}{\eps^{\alpha}}(L+\eps Af,\chi^{\eps}F)(f,\chi^{\eps}F)+\frac{2}{\eps^{\frac{\alpha}{2}}}(L+\eps Af,\chi^{\eps}F)(f,\delta^{\eps}(n)F)\\
+2(fn,\chi^{\eps}F)(f,\delta^{\eps}(n)F)+2(fn,\delta^{\eps}(n)F)(f,\chi^{\eps}F)-2(f,\xi^{\eps}\varphi F)\\
-2(f,\theta(n)\varphi F)(f,\chi^{\eps}F)+2(Lf+\eps Af,\chi^{\eps}F)(f,\theta^{\eps}(n)F)+2(f,\zeta^{\eps}(Lf+\eps Af,n)F)\\+\eps^{\frac{\alpha}{2}}(fn,D\phi^{\eps}_2(f,n)).
\end{multline*}
Now, with the definitions of $\theta$, $\xi$ and the limit generator $\mathscr{L}$, we write the following decomposition 
$\mathscr{L}^{\eps}\Psi^{\eps,2}(f,n)-\mathscr{L}|\Psi|^2(\rho)=\sum_{i=1}^9\tau_i(f,n),$
where
\begin{align*}
\tau_1:&=\frac{2}{\eps^{\alpha}}(L+\eps Af,\chi^{\eps}F)(f,\chi^{\eps}F)-2(-\kappa(-\Delta)^{\frac{\alpha}{2}}f,\varphi F)(f,\varphi F),\\
\tau_2:&=-2\int_E(f,nM^{-1}I(n)\varphi F)(f,(\chi^{\eps}-\varphi)F)\dd\nu(n),\\
\tau_3:&=2\int_E(f,(\delta^{\eps}(n)F+M^{-1}I(n)\varphi F))(fn,\varphi F)\dd\nu(n),
\end{align*}
\begin{alignat*}{2}
\tau_4:&=2(fn,(\delta^{\eps}(n)F+M^{-1}I(n)\varphi F))(f,\chi^{\eps}F), \quad &&\tau_5:=2(f,\delta^{\eps}(n)F)(f,(\chi^{\eps}-\varphi) F),\\
\tau_6:&=\frac{2}{\eps^{\frac{\alpha}{2}}}(Lf+\eps Af,\chi^{\eps}F)(f,\delta^{\eps}(n)F), \quad &&\tau_7:=2(Lf+\eps Af,\chi^{\eps}F)(f,\theta^{\eps}(n)F),\\
\tau_8:&=2(f,\zeta^{\eps}(Lf+\eps Af,n)F), \quad &&\tau_9:=\eps^{\frac{\alpha}{2}}(fn,D\phi^{\eps}_2(f,n)).
\end{alignat*}
To conclude the proof, we are now about to bound every $\tau_i$. For $\tau_1$, we write
\begin{multline*}
\tau_1=\frac{2}{\eps^{\alpha}}(L+\eps Af,\chi^{\eps}F)(f,\chi^{\eps}F)-2(-\kappa(-\Delta)^{\frac{\alpha}{2}}f,\varphi F)(f,\chi^{\eps}F)\\
+2(f,-\kappa(-\Delta)^{\frac{\alpha}{2}}\varphi F)(f,(\chi^{\eps}-\varphi)F),
\end{multline*}
so that, with $(\ref{chinorme})$,
\begin{multline*}
|\tau_1|\lesssim \|f\|\|\varphi\|_{L^2_x}\left|\frac{1}{\eps^{\alpha}}(L+\eps Af,\chi^{\eps}F)+(\kappa(-\Delta)^{\frac{\alpha}{2}}f,\varphi F)\right|\\
+2\|f\|^2\|\kappa(-\Delta)^{\frac{\alpha}{2}}\varphi\|_{L^2_x}\|(\chi^{\eps}-\varphi)F\|,
\end{multline*}
and we use $(\ref{Laplacien})$ and $(\ref{chiestimate})$. Similarly, we bound $\tau_2$ thanks to $(\ref{chiestimate})$, $\tau_3$ thanks to $(\ref{deltaestimate})$, $\tau_4$ thanks to $(\ref{chinorme})$ and $(\ref{deltaestimate})$, $\tau_5$ thanks to $(\ref{deltanorme})$ and $(\ref{chiestimate})$. For $\tau_6$, we write
\begin{multline*}
\tau_6=2\eps^{\frac{\alpha}{2}}\left(\frac{1}{\eps^{\alpha}}(Lf+\eps Af,\chi^{\eps}F)-(-\kappa(-\Delta)^{\frac{\alpha}{2}}f,\varphi F)\right)(f,\delta^{\eps}(n)F)\\+2\eps^{\frac{\alpha}{2}}(f,-\kappa(-\Delta)^{\frac{\alpha}{2}}\varphi F)(f,\delta^{\eps}(n)F),
\end{multline*}
so that, with $(\ref{deltanorme})$,
\begin{multline*}
|\tau_6|\lesssim \eps^{\frac{\alpha}{2}}\|f\|\|\varphi\|_{L^2_x}\left|\frac{1}{\eps^{\alpha}}(L+\eps Af,\chi^{\eps}F)+(\kappa(-\Delta)^{\frac{\alpha}{2}}f,\varphi F)\right|\\+\eps^{\frac{\alpha}{2}}\|f\|^2\|\kappa(-\Delta)^{\frac{\alpha}{2}}\varphi\|_{L^2_x}\|\varphi\|_{L^2_x},
\end{multline*}
and we use $(\ref{Laplacien})$. We handle the case of $\tau_7$ similarly. We bound $\tau_8$ thanks to $(\ref{zetabonus})$, and $\tau_9$ thanks to $(\ref{Dphi2})$. \\
Finally, the combination of the bounds on the $\tau_i$ exactly yields the required result. This concludes the proof. $\Box$ 
\end{proof}

\section{Uniform bound in $\LL$}
In this section, we prove a uniform estimate of the $\LL$ norm of the solution $f^{\eps}$ with respect to $\eps$. To do so, 
we will again use the perturbed test functions method. Thus, let us begin by defining a correction function. Namely, we introduce the function $\iota^{\eps}:\R^d\times V\times E\to \R$ with 
$$\iota^{\eps}(x,v,n):=\sum_{i=0}^{+\infty}\int_0^{+\infty}e^{tM}n_i(n)\eta_i(x+\eps vt)\,\dd t.$$
Similarly as Proposition \ref{delta}, we can prove the
\begin{prop}\label{iota}The function $\iota^{\eps}$ is in $L^{\infty}(\R^d\times V\times E)$ with
\begin{equation}\label{iotanorme}
\|\iota^{\eps}\|_{L^{\infty}(\R^d\times V\times E)}\lesssim 1.
\end{equation}
It satisfies
\begin{equation}\label{iotaeq}
(M-\eps A)(\iota^{\eps})=-n.
\end{equation}
\end{prop}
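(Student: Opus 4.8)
The plan is to follow the proof of Proposition \ref{delta} almost verbatim, exploiting the fact that the present situation is strictly simpler: there is neither the contraction semigroup $g(t,\cdot)$ nor the weight $F$, and we work directly in $L^{\infty}$ rather than in $\LL$, so that the $L^2$-type manipulations of Proposition \ref{delta} (Cauchy--Schwarz, Fubini, the contraction property $(\ref{contract})$) collapse into a single pointwise estimate. I write $\iota^{\eps}=\sum_{i=0}^{+\infty}\gamma_i$, where $\gamma_i(x,v,n):=\int_0^{+\infty}e^{tM}n_i(n)\,\eta_i(x+\eps vt)\,\dd t$.

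For $(\ref{iotanorme})$, I would estimate each term pointwise. By $(\ref{nicentres})$, $(\ref{decroiss})$ and $(\ref{borneM1})$ one has $|e^{tM}n_i(n)|\leq Ke^{-\mu t}$, while $|\eta_i(x+\eps vt)|\leq\|\eta_i\|_{W^{1,\infty}}$, whence $|\gamma_i(x,v,n)|\leq\frac{K}{\mu}\|\eta_i\|_{W^{1,\infty}}$ uniformly in $(x,v,n)$. Summing over $i$ and using $S=\sum_{i}\|\eta_i\|_{W^{1,\infty}}<\infty$ yields at once the absolute convergence of the series defining $\iota^{\eps}$ in $L^{\infty}(\R^d\times V\times E)$ and the bound $\|\iota^{\eps}\|_{L^{\infty}}\leq\frac{KS}{\mu}\lesssim 1$.

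For the equation $(\ref{iotaeq})$, I would proceed in two steps, exactly as for $\delta^{\eps}$. First, I establish that each $\gamma_i$ lies in $\mathrm{D}(M)$ with $M\gamma_i(x,v,n)=\int_0^{+\infty}Me^{tM}n_i(n)\,\eta_i(x+\eps vt)\,\dd t$; this rests on $(\ref{dansdomaine})$, the bound on $\|Mn_i\|_{\infty}$ in $(\ref{borneM1})$, Lemma \ref{ehmmoinsiborne} and dominated convergence, after which the finiteness of $S$ lets me sum to obtain $M\iota^{\eps}=\sum_i M\gamma_i$ absolutely. Second comes the key computation: writing $Me^{tM}n_i=\frac{\dd}{\dd t}e^{tM}n_i$ and integrating by parts in $t$, the boundary term at $t=+\infty$ vanishes since $e^{tM}n_i(n)\to 0$ by $(\ref{decroiss})$ and $(\ref{nicentres})$ while $\eta_i$ is bounded, the boundary term at $t=0$ produces $-n_i(n)\eta_i(x)$, and the remaining integral, using $\frac{\dd}{\dd t}\eta_i(x+\eps vt)=\eps v\cdot\nabla_x\eta_i(x+\eps vt)$, equals $-\eps v\cdot\nabla_x\gamma_i=\eps A\gamma_i$ since $A=-v\cdot\nabla_x$. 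This gives the termwise identity $(M-\eps A)\gamma_i=-n_i(n)\eta_i$.

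Finally I would sum over $i$. Since $\sum_i n_i(n)\eta_i=n$ converges absolutely in $W^{1,\infty}(\R^d)$ by $(\ref{decompositionden})$ and $A$ is closed (so that $A$ may be interchanged with the sum, exactly as in the proof of $(\ref{deltaeq})$), summing $(M-\eps A)\gamma_i=-n_i(n)\eta_i$ over $i$ gives $(M-\eps A)\iota^{\eps}=-n$, as claimed. The main obstacle is this second step: justifying $\gamma_i\in\mathrm{D}(M)$ with term-by-term application of $M$ and then interchanging $M$, $A$ and the infinite sum. The integration by parts and the $L^{\infty}$ estimate are otherwise routine, once one has in hand the regularity of $t\mapsto e^{tM}n_i(n)$ (of class $W^{1,1}$ by $(\ref{decroiss})$ and $(\ref{borneM1})$) and of $t\mapsto\eta_i(x+\eps vt)$ (Lipschitz, since $\eta_i\in W^{1,\infty}(\R^d)$).
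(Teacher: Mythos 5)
Your proposal is correct and is exactly the argument the paper intends: the paper gives no separate proof of Proposition \ref{iota} but states that it is obtained ``similarly as Proposition \ref{delta}'', and your adaptation (termwise $L^{\infty}$ bound via $(\ref{decroiss})$--$(\ref{borneM1})$, termwise membership in $\mathrm{D}(M)$, integration by parts in $t$ giving $(M-\eps A)\gamma_i=-n_i\eta_i$, then summation using the absolute convergence of $\sum_i n_i\eta_i$ and the closedness of $A$) is precisely that proof, correctly simplified by the absence of $g(t,\cdot)$ and of the weight $F$.
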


\begin{prop}
For all $p\geq 1$ and $f_0\in \mathrm{D}(A)$, we have the following bound
\begin{equation}\label{L2bound}
\E\sup\limits_{t\in[0,T]}\|f^{\eps}_t\|^p\, \lesssim \, 1.
\end{equation}
\end{prop}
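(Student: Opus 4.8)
The plan is to run the perturbed test functions method on the energy, with the corrector $\iota^{\eps}$ of Proposition \ref{iota} playing for $\|f\|^2$ the role $\delta^{\eps}$ played for $\Psi$. It suffices to treat $p=2k$, $k\in\N$: once $\E\sup_{t\in[0,T]}\|f^{\eps}_t\|^{2k}\lesssim1$ is known for every $k$, the general case $p\geq1$ follows by Jensen's inequality, since $\E\sup_{t}\|f^{\eps}_t\|^{p}=\E\big[(\sup_{t}\|f^{\eps}_t\|^{2k})^{p/(2k)}\big]\leq(\E\sup_{t}\|f^{\eps}_t\|^{2k})^{p/(2k)}$ as soon as $2k\geq p$. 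Fix such an even $p$ (so that everything below is smooth in $f$), set
$$\phi^{\eps}(f,n):=(f\iota^{\eps},f)=\int_{\R^d}\!\int_V\frac{f(x,v)^2}{F(v)}\,\iota^{\eps}(x,v,n)\,\dd v\,\dd x,$$
and define the perturbed test function
$$\Phi^{\eps}_p(f,n):=\|f\|^p+p\,\eps^{\frac{\alpha}{2}}\|f\|^{p-2}\phi^{\eps}(f,n).$$
Since $\|\iota^{\eps}\|_{\infty}\lesssim1$ by $(\ref{iotanorme})$ we have $|\phi^{\eps}(f,n)|\lesssim\|f\|^2$, whence $|\Phi^{\eps}_p-\|f\|^p|\lesssim\eps^{\frac{\alpha}{2}}\|f\|^p$, so that for $\eps$ small enough $\tfrac12\|f\|^p\leq\Phi^{\eps}_p\leq\tfrac32\|f\|^p$.

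Next I would compute $\mathscr{L}^{\eps}\Phi^{\eps}_p$ and check that the singular powers of $\eps$ cancel. Since $D\|f\|^p=p\|f\|^{p-2}f$, using $(\ref{dissip})$ and the skew-adjointness of $A$,
$$\mathscr{L}^{\eps}\|f\|^p=-\frac{p}{\eps^{\alpha}}\|f\|^{p-2}\|Lf\|^2+\frac{p}{\eps^{\frac{\alpha}{2}}}\|f\|^{p-2}(fn,f),$$
the second term being the only genuinely singular one. To cancel it I would use the corrector equation $(\ref{iotaeq})$, i.e. $M\iota^{\eps}=\eps A\iota^{\eps}-n$: acting with $M$ on the $n$-dependence of $\phi^{\eps}$ gives $M\phi^{\eps}=-\eps\int_{\R^d}\int_V\tfrac{f^2}{F}\,v\cdot\nabla_x\iota^{\eps}\,\dd v\,\dd x-(fn,f)$, and the contribution $\tfrac{p}{\eps^{\alpha/2}}\|f\|^{p-2}M\phi^{\eps}$ coming from the $\tfrac1{\eps^{\alpha}}M$ part of $\mathscr{L}^{\eps}$ removes exactly $\tfrac{p}{\eps^{\alpha/2}}\|f\|^{p-2}(fn,f)$.

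The delicate point --- and the main obstacle --- is the transport term, which carries the unbounded factor $v$: both corrections to $\mathscr{L}^{\eps}\Phi^{\eps}_p$ produce the quantity $\int_{\R^d}\int_V\tfrac{f^2}{F}\,v\cdot\nabla_x\iota^{\eps}\,\dd v\,\dd x$, which is only of size $O(\eps^{-1}\|f\|^2)$, so that each contribution is of the singular order $\eps^{-\alpha/2}\|f\|^p$ and must be cancelled rather than estimated. The mechanism is that it appears twice with opposite signs: an integration by parts in $x$ gives $(Af,f\iota^{\eps})=\tfrac12\int_{\R^d}\int_V\tfrac{f^2}{F}\,v\cdot\nabla_x\iota^{\eps}\,\dd v\,\dd x$, so the $\eps A$-part of $\mathscr{L}^{\eps}$ acting on the corrector cancels exactly the piece $\eps A\iota^{\eps}$ hidden in $M\phi^{\eps}$ through $(\ref{iotaeq})$. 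What then remains involves only $(Lf,f\iota^{\eps})$, bounded by $\|Lf\|\,\|f\|$ thanks to $(\ref{iotanorme})$, and the harmless term $\int_{\R^d}\int_V\tfrac{f^2}{F}\,n\,\iota^{\eps}\,\dd v\,\dd x\lesssim\|f\|^2$; the former is absorbed into the dissipation $-\tfrac{p}{\eps^{\alpha}}\|f\|^{p-2}\|Lf\|^2$ by Young's inequality, up to a term $\lesssim\|f\|^p$. Altogether, for $\eps$ small enough,
$$\mathscr{L}^{\eps}\Phi^{\eps}_p(f,n)\lesssim\|f\|^p\lesssim\Phi^{\eps}_p(f,n).$$

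With this in hand I would conclude in two stages. First, exactly as for $(\ref{deltanormeM})$ one checks $\|M\iota^{\eps}\|_{\infty}\lesssim1$, hence $|M\phi^{\eps}|\lesssim\|f\|^2$, so that $\Phi^{\eps}_p$ is a good test function; applying Proposition \ref{gene} (legitimate since $f^{\eps}_0\in\mathrm{D}(A)$) and taking expectations kills the martingale, giving
$$\E\Phi^{\eps}_p(f^{\eps}_t,m^{\eps}_t)\leq\E\Phi^{\eps}_p(f^{\eps}_0,m^{\eps}_0)+C\int_0^t\E\Phi^{\eps}_p(f^{\eps}_s,m^{\eps}_s)\,\dd s.$$
Gronwall's lemma and the boundedness of $(f^{\eps}_0)$ in $\LL$ then yield $\sup_{\eps}\sup_{t\in[0,T]}\E\|f^{\eps}_t\|^{q}\lesssim1$ for every $q\geq1$. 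Second, to move the supremum inside the expectation, I would write the martingale decomposition, bound the drift by $\mathscr{L}^{\eps}\Phi^{\eps}_p\lesssim\|f\|^p$, and obtain
$$\sup_{t\leq T}\Phi^{\eps}_p(f^{\eps}_t,m^{\eps}_t)\leq\Phi^{\eps}_p(f^{\eps}_0,m^{\eps}_0)+C\int_0^T\|f^{\eps}_s\|^p\,\dd s+\sup_{t\leq T}|M^{\eps}_{\Phi^{\eps}_p}(t)|.$$
By Doob's $L^2$ inequality $\E\sup_{t\leq T}|M^{\eps}_{\Phi^{\eps}_p}(t)|\leq(\E\sup_t|M^{\eps}_{\Phi^{\eps}_p}|^2)^{1/2}\lesssim(\E|M^{\eps}_{\Phi^{\eps}_p}(T)|^2)^{1/2}$, and since $|M^{\eps}_{\Phi^{\eps}_p}(T)|^2\lesssim\|f^{\eps}_T\|^{2p}+\|f^{\eps}_0\|^{2p}+\int_0^T\|f^{\eps}_s\|^{2p}\,\dd s$, the moment bounds of order $2p$ give $\E\sup_t|M^{\eps}_{\Phi^{\eps}_p}(t)|\lesssim1$. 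Taking expectations in the previous display and using $\|f^{\eps}_t\|^p\leq2\Phi^{\eps}_p(f^{\eps}_t,m^{\eps}_t)$ yields $\E\sup_{t\leq T}\|f^{\eps}_t\|^p\lesssim1$, which is the claim.
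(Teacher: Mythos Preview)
Your overall strategy is the same as the paper's: correct the energy by $\eps^{\alpha/2}\phi^{\eps}$ with $\phi^{\eps}(f,n)=(f\iota^{\eps},f)$, use the identity $2(\eps Af,\iota^{\eps}f)=(f,-\eps A\iota^{\eps}\,f)$ together with $(\ref{iotaeq})$ to kill the singular term $\eps^{-\alpha/2}(fn,f)$, absorb $\eps^{-\alpha/2}(Lf,\iota^{\eps}f)$ into the dissipation by Young, and close by Gronwall. The paper corrects only $\Theta=\tfrac12\|f\|^2$ and then raises the resulting pathwise inequality to the $p$th power, whereas you correct $\|f\|^p$ directly; both variants work and your generator computation is correct.

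There is, however, a genuine gap in your final step. You claim the pointwise bound $|M^{\eps}_{\Phi^{\eps}_p}(T)|^2\lesssim\|f^{\eps}_T\|^{2p}+\|f^{\eps}_0\|^{2p}+\int_0^T\|f^{\eps}_s\|^{2p}\,\dd s$, but this would require $|\mathscr{L}^{\eps}\Phi^{\eps}_p|\lesssim\|f\|^p$ uniformly in $\eps$, which is false: the dissipation term $-\tfrac{p}{\eps^{\alpha}}\|f\|^{p-2}\|Lf\|^2$ makes $\mathscr{L}^{\eps}\Phi^{\eps}_p$ unbounded \emph{below}; only the one-sided bound $\mathscr{L}^{\eps}\Phi^{\eps}_p\leq C\|f\|^p$ holds. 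The correct route---and this is what the paper does---is to control $\E|M^{\eps}_{\Phi^{\eps}_p}(T)|^2$ through the quadratic variation $\E\langle M^{\eps}_{\Phi^{\eps}_p}\rangle_T$, after checking that $|\Phi^{\eps}_p|^2$ is itself a good test function and that the carr\'e-du-champ integrand $\eps^{-\alpha}(M|\Phi^{\eps}_p|^2-2\Phi^{\eps}_pM\Phi^{\eps}_p)$ is $\lesssim\|f\|^{2p}$. In your setting this reduces to $|M(\phi^{\eps})^2-2\phi^{\eps}M\phi^{\eps}|\lesssim\|f\|^4$, which is proved exactly as in Proposition~\ref{isgoodtestsquare} using the hypotheses $(\ref{dansdomaine})$--$(\ref{borneM2})$. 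Once you have $\langle M^{\eps}_{\Phi^{\eps}_p}\rangle_T\lesssim\int_0^T\|f^{\eps}_s\|^{2p}\,\dd s$, Doob (or BDG) together with your first-stage moment bounds finishes the argument.
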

\begin{proof}
We set, for all $f\in\LL$, $\Theta(f):=\frac{1}{2}\|f\|^2$, which is easily seen to be a good test function. Then,, with the fact that $A$ is skew-adjoint, $(\ref{dissip})$, and the fact that $\Theta$ does not depend on $n\in E$, we get for $f\in\mathrm{D}(A)$ and $n\in E$,
\begin{align*}
\mathscr{L}^{\eps}\Theta(f,n)&=\frac{1}{\eps^{\alpha}}(Lf+\eps Af,f)+\frac{1}{\eps^{\frac{\alpha}{2}}}(fn,f)+\frac{1}{\eps^{\alpha}}M\Theta(f,n)\\
&=-\frac{1}{\eps^{\alpha}}\|Lf\|^2+\frac{1}{\eps^{\frac{\alpha}{2}}}(fn,f).
\end{align*}
The first term has a favourable sign to obtain our bound. The second term is more difficult to control, and we correct $\Theta$ as follows. We set $\phi^{\eps}(f,n)=(f,\iota^{\eps}(n)f)$ and $\Theta^{\eps}(f,n):=\Theta(f,n)+\eps^{\frac{\alpha}{2}}\phi^{\eps}(f,n)$. We can show, with the same method as in the proof of Proposition \ref{aregoodtest}, that $\phi^{\eps}$ is a good test function. We then use integrations by parts and $(\ref{iotaeq})$ to discover 
\begin{align*}\eps^{\frac{\alpha}{2}}\mathscr{L}^{\eps}\phi^{\eps}(f,n)&=\frac{2}{\eps^{\frac{\alpha}{2}}}(Lf,\iota^{\eps}(n)f)+\frac{2}{\eps^{\frac{\alpha}{2}}}(\eps Af,\iota^{\eps}(n)f)+2(fn,\iota^{\eps}(n)f)+\frac{1}{\eps^{\frac{\alpha}{2}}}(f,M\iota^{\eps}(n)f)\\
&=\frac{2}{\eps^{\frac{\alpha}{2}}}(Lf,\iota^{\eps}(n)f)+\frac{1}{\eps^{\frac{\alpha}{2}}}(f,(M-\eps A)[\iota^{\eps}(n)]f)+2(fn,\iota^{\eps}(n)f)\\
&=\frac{2}{\eps^{\frac{\alpha}{2}}}(Lf,\iota^{\eps}(n)f)-\frac{1}{\eps^{\frac{\alpha}{2}}}(fn,f)+2(fn,\iota^{\eps}(n)f).
\end{align*}

\noindent To sum up, since $\mathscr{L}^{\eps}\Theta^{\eps}(f,n)=\mathscr{L}^{\eps}\Theta(f,n)+\eps^{\frac{\alpha}{2}}\mathscr{L}^{\eps}\phi^{\eps}(f,n)$, we have 
$$\mathscr{L}^{\eps}\Theta^{\eps}(f,n)=-\frac{1}{\eps^{\alpha}}\|Lf\|^2+\frac{2}{\eps^{\frac{\alpha}{2}}}(Lf,\iota^{\eps}(n)f)+2(fn,\iota^{\eps}(n)f).$$
We use $(\ref{iotanorme})$ to bound the second term:
\begin{align*}
\frac{2}{\eps^{\frac{\alpha}{2}}}(Lf,\iota^{\eps}(n)f)&\lesssim \frac{1}{\eps^{\frac{\alpha}{2}}}\|Lf\|\|f\|\\
&\leq \frac{\|Lf\|^2}{2\eps^{\alpha}}+\frac{1}{2}\|f\|^2\lesssim\frac{\|Lf\|^2}{2\eps^{\alpha}}+\|f\|^2.
\end{align*}
Besides, note that with $(\ref{iotanorme})$ the third term is $\lesssim\|f\|^2$. Finally we just proved that 
\begin{equation}\label{lepsborne}
|\mathscr{L}^{\eps}\Theta^{\eps}(f,n)|\lesssim \|f\|^2.
\end{equation}

\noindent As in Proposition \ref{gene}, since $\Theta^{\eps}$ is a good test function, we now set,
$$M^{\eps}_{\Theta^{\eps}}(t):=\Theta^{\eps}(f^{\eps}_t,m^{\eps}_t)-\Theta^{\eps}(f^{\eps}_0,m^{\eps}_0)-\int_0^t\mathscr{L}^{\eps}\Theta^{\eps}(f^{\eps}_s,m^{\eps}_s)\,\dd s,$$
which is a continuous and integrable $(\mathcal{F}^{\eps}_t)_{t\geq 0}$ martingale. 
By definition of $\Theta$, $\Theta^{\eps}$ and $M^{\eps}$,
$$\frac{1}{2}\|f^{\eps}_t\|^2=\frac{1}{2}\|f^{\eps}_0\|^2-\eps^{\frac{\alpha}{2}}(\phi^{\eps}(f^{\eps}_t,m^{\eps}_t)-\phi^{\eps}(f^{\eps}_0,m^{\eps}_0))+\int_0^t\mathscr{L}^{\eps}\Theta^{\eps}(f^{\eps}_s,m^{\eps}_s)\,\dd s+M^{\eps}_{\Theta^{\eps}}(t).$$
Since with $(\ref{iotanorme})$ we have $|\phi^{\eps}(f,n)|\lesssim\|f\|^2$, we can write, with $(\ref{lepsborne})$, 
$$\|f^{\eps}_t\|^2\lesssim\|f^{\eps}_0\|^2+\eps^{\frac{\alpha}{2}}\|f^{\eps}_t\|+\int_0^t\|f^{\eps}_s\|^2\,\dd s+\sup\limits_{t\in[0,T]}|M^{\eps}_{\Theta^{\eps}}(t)|,$$
that is, for $\eps$ sufficiently small and by Gronwall Lemma,
\begin{equation}\label{z1}
\|f^{\eps}_t\|^2\lesssim\|f^{\eps}_0\|^2+\sup\limits_{t\in[0,T]}|M^{\eps}_{\Theta^{\eps}}(t)|.
\end{equation}
Furthermore, similarly as Proposition \ref{isgoodtestsquare}, we can show that $|\Theta^{\eps}|^2$ is a good test function, and that 
$$|\mathscr{L}^{\eps}|\Theta^{\eps}|^2-2\Theta^{\eps}\mathscr{L}^{\eps}\Theta^{\eps}|=\eps^{-\alpha}|M|\Theta^{\eps}|^2-2\Theta^{\eps}M\Theta^{\eps}|\lesssim \|f\|^4(1+\Lambda(\eps)),$$
for some function $\Lambda$ which only depends on $\eps$ and such that $\Lambda(\eps)\to 0$ as $\eps\to 0$. In particular, for $\eps$ small enough, 
$$|\mathscr{L}^{\eps}|\Theta^{\eps}|^2-2\Theta^{\eps}\mathscr{L}^{\eps}\Theta^{\eps}|\lesssim \|f\|^4.$$
Besides, with Proposition \ref{gene}, the quadratic variation of $M^{\eps}_{\Theta^{\eps}}(t)$ is given by 
$$\langle M^{\eps}_{\Theta^{\eps}}\rangle_t=\int_0^t(\mathscr{L}^{\eps}|\Theta^{\eps}|^2-2\Theta^{\eps}\mathscr{L}^{\eps}\Theta^{\eps})(f^{\eps}_s,m^{\eps}_s)\,\dd s.$$
As a result, with Burkholder-Davis-Gundy  and Hölder inequalities, we get
\begin{equation}\label{z2}
\E[\sup\limits_{t\in[0,T]}|M^{\eps}_{\Theta^{\eps}}|^p]\lesssim\E[|\langle M^{\eps}_{\Theta^{\eps}}\rangle_T|^{\frac{p}{2}}]\lesssim \int_0^T\E[\|f^{\eps}_s\|^{2p}]\,\dd s.
\end{equation}
By $(\ref{z1})$, we have 
$$\E[\|f^{\eps}_t\|^{2p}]\lesssim\E[\|f^{\eps}_0\|^{2p}]+\E[\sup\limits_{t\in[0,T]}|M^{\eps}_{\Theta^{\eps}}(t)|^p],
$$
so that we get 
$$\E[\|f^{\eps}_T\|^{2p}]\lesssim\E[\|f^{\eps}_0\|^{2p}]+\int_0^T\E[\|f^{\eps}_s\|^{2p}]\,\dd s,
$$
that is, by Gronwall lemma,
$$\E[\|f^{\eps}_T\|^{2p}]\lesssim\E[\|f^{\eps}_0\|^{2p}].$$
This actually holds true for any $t\in[0,T]$. Thus, using $(\ref{z2})$ and then $(\ref{z1})$ gives finally the result. $\Box$
\end{proof}

\section{Summary of the results}
In this section we state the following proposition which sums up all the results obtained above. This will be convenient to handle the tightness and convergence steps. We recall that the corrections $\Psi^{\eps,i}$, $i=1,2$ are defined in Section \ref{sectioncorrect}.
\begin{prop}\label{recap}
Let $f^{\eps}_0\in\mathrm{D}(A)$. For $i=1,2$,
$$M^{\eps}_i(t):=\Psi^{\eps,i}(f^{\eps}_t,m^{\eps}_t)-\Psi^{\eps,i}(f^{\eps}_0,m^{\eps}_0)-\int_0^t\mathscr{L}^{\eps}\Psi^{\eps,i}(f^{\eps}_s,m^{\eps}_s)\,\dd s,\quad t\in[0,T], $$
is a continuous and integrable martingale for the filtration $(\mathcal{F}^{\eps}_t)_{t\geq 0}$ generated by $(m^{\eps}_t,t\geq 0)$. The quadratic variation of $M^{\eps}_1$ is given by
$$\langle M^{\eps}_1\rangle_t=\int_0^t(\mathscr{L}^{\eps}|\Psi^{\eps,1}|^2-2\Psi^{\eps,1}\mathscr{L}^{\eps}\Psi^{\eps,1})(f^{\eps}_s,m^{\eps}_s)\,\dd s, \quad t\in[0,T]$$
and we have, for all $t\in[0,T]$,
\begin{equation}\label{borneducrochet}
|\mathscr{L}^{\eps}|\Psi^{\eps,1}|^2-2\Psi^{\eps,1}\mathscr{L}^{\eps}\Psi^{\eps,1}|(f^{\eps}_t,m^{\eps}_t)\lesssim \sup\limits_{t\in[0,T]} \|f^{\eps}_t\|^2\|\varphi\|^2_{L^2_x}.
\end{equation}
Furthermore, for any $\lambda > 0$, $0\leq s_1\leq\cdots\leq s_n\leq s\leq t$ and $G\in C_b((L^2(\R^d))^n)$,
\begin{equation}\label{convergence1}
\left|\E\left[\left(\Psi(\rho^{\eps}_t F)-\Psi(\rho^{\eps}_sF)-\int_s^t\mathscr{L}\Psi(\rho^{\eps}_{\sigma})\,\dd \sigma\right)G(\rho^{\eps}_{s_1},...,\rho^{\eps}_{s_n})\right]\right|\lesssim \Lambda(\eps)C_{\varphi,\lambda}+C_{\varphi}\lambda,
\end{equation}
\begin{equation}\label{convergence2}
\left|\E\left[\left(|\Psi|^2(\rho^{\eps}_tF)-|\Psi|^2(\rho^{\eps}_sF)-\int_s^t\mathscr{L}|\Psi|^2(\rho^{\eps}_{\sigma})\,\dd \sigma\right)G(\rho^{\eps}_{s_1},...,\rho^{\eps}_{s_n})\right]\right|\lesssim \Lambda(\eps)C_{\varphi,\lambda}+C_{\varphi}\lambda,
\end{equation}
for some function $\Lambda$, which only depends on $\eps$, such that $\Lambda(\eps)\to 0$ when $\eps\to 0$.
Finally, for all $t\in[0,T]$, we have the following estimate:
\begin{equation}\label{grandO}
|\mathscr{L}^{\eps}\Psi^{\eps,1}|(f^{\eps}_t,m^{\eps}_t)\lesssim \sup\limits_{t\in[0,T]}\|f^{\eps}_t\|(\|\varphi\|_{L^2_x}+\|\nabla_x\varphi\|_{L^2_x}+\|D^2\varphi\|_{L^2_x}+\|(-\Delta)^{\frac{\alpha}{2}}\varphi\|_{L^2_x}).
\end{equation}
\end{prop}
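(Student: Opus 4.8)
The plan is to assemble the proposition from the constructions and estimates already in place: nothing genuinely new needs to be invented, and each assertion matches a previous result.

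\textbf{Martingale structure and quadratic variation.} First I would note that, by Propositions \ref{aregoodtest} and \ref{isgoodtestsquare}, the functions $\Psi^{\eps,1}$, $\Psi^{\eps,2}$ and $|\Psi^{\eps,1}|^2$ are good test functions; Proposition \ref{gene} then delivers immediately that $M^{\eps}_1$ and $M^{\eps}_2$ are continuous integrable $(\mathcal{F}^{\eps}_t)$-martingales and that $\langle M^{\eps}_1\rangle_t$ has the stated integral form. For $(\ref{borneducrochet})$ the key observation is that the $f$-part of $\mathscr{L}^{\eps}$, being a directional derivative in $f$, is a first-order derivation and hence obeys Leibniz's rule; consequently every contribution except the $M$-term cancels in the carr\'e-du-champ, leaving
$$\mathscr{L}^{\eps}|\Psi^{\eps,1}|^2-2\Psi^{\eps,1}\mathscr{L}^{\eps}\Psi^{\eps,1}=\eps^{-\alpha}\left(M|\Psi^{\eps,1}|^2-2\Psi^{\eps,1}M\Psi^{\eps,1}\right).$$
Bound $(\ref{boundMMtest})$ then yields $(\ref{borneducrochet})$ after replacing $\|f\|^2$ by $\sup_{t\in[0,T]}\|f^{\eps}_t\|^2$.

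\textbf{The convergence estimates.} For $(\ref{convergence1})$, since $s_1\leq\cdots\leq s_n\leq s$ the random variable $G(\rho^{\eps}_{s_1},\dots,\rho^{\eps}_{s_n})$ is $\mathcal{F}^{\eps}_s$-measurable, so the martingale property of $M^{\eps}_1$ gives $\E[(M^{\eps}_1(t)-M^{\eps}_1(s))G]=0$. I would then rewrite the target expectation as this vanishing quantity plus the errors produced by replacing $\Psi^{\eps,1}$ by $\Psi$ and $\mathscr{L}^{\eps}\Psi^{\eps,1}$ by $\mathscr{L}\Psi$. Using $\Psi(\rho^{\eps}F)=(f^{\eps},\varphi F)$, the boundary errors are $\Psi(\rho^{\eps}F)-\Psi^{\eps,1}(f^{\eps},m^{\eps})=(f^{\eps},(\varphi-\chi^{\eps})F)-\eps^{\frac{\alpha}{2}}\varphi_1^{\eps}-\eps^{\alpha}\varphi_2^{\eps}$ at times $t$ and $s$, controlled by $(\ref{chiestimate})$ and $(\ref{boundtest})$, while the integrand error $\mathscr{L}\Psi(\rho^{\eps}_{\sigma})-\mathscr{L}^{\eps}\Psi^{\eps,1}(f^{\eps}_{\sigma},m^{\eps}_{\sigma})$ is controlled by Proposition \ref{ordre1} in the form $(\ref{ordre1simple})$. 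All these errors are linear in $\|f^{\eps}\|$, so taking $|G|\lesssim 1$ and invoking the uniform moment bound $(\ref{L2bound})$ (with $p=1$) to control $\E\sup_{[0,T]}\|f^{\eps}\|$ produces the right-hand side $\Lambda(\eps)C_{\varphi,\lambda}+C_{\varphi}\lambda$. Estimate $(\ref{convergence2})$ follows identically, now with $\Psi^{\eps,2}$, the correctors $\phi_1^{\eps},\phi_2^{\eps}$ and Proposition \ref{ordre2}; the errors are then quadratic in $\|f^{\eps}\|$, so one uses $(\ref{L2bound})$ with $p=2$.

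\textbf{The pathwise bound $(\ref{grandO})$ and the main obstacle.} For $(\ref{grandO})$ I would start from the explicit expression of $\mathscr{L}^{\eps}\Psi^{\eps,1}$ obtained in the proof of Proposition \ref{ordre1}. The only term that looks singular is $\eps^{-\alpha}(\eps Af+Lf,\chi^{\eps}F)$; fixing $\lambda=1$ and taking $\eps$ small, Lemma \ref{lemmeLaplacien} shows it equals $-(\kappa(-\Delta)^{\frac{\alpha}{2}}f,\varphi F)$ up to an error $\lesssim\|f\|(\|\varphi\|_{L^2_x}+\|D^2\varphi\|_{L^2_x})$, and self-adjointness of $(-\Delta)^{\frac{\alpha}{2}}$ bounds the principal part by $\|f\|\,\|(-\Delta)^{\frac{\alpha}{2}}\varphi\|_{L^2_x}$. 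The remaining terms are manifestly regular and are bounded by $\|f\|\,\|\varphi\|_{L^2_x}$ and $\|f\|\,\|\nabla_x\varphi\|_{L^2_x}$ using $(\ref{deltanorme})$, $(\ref{thetanorme})$ and the boundedness of $M^{-1}I$; replacing $\|f^{\eps}_t\|$ by its supremum over $[0,T]$ gives $(\ref{grandO})$. The genuinely delicate part of the whole proof is the bookkeeping in $(\ref{convergence1})$–$(\ref{convergence2})$: one must thread the free parameter $\lambda$ and the vanishing function $\Lambda(\eps)$ consistently through both the boundary and the integrand errors, and check that each error is exactly linear (respectively quadratic) in $\|f^{\eps}\|$ so that a single application of $(\ref{L2bound})$ closes the estimate; the martingale and quadratic-variation assertions are then immediate consequences of Proposition \ref{gene} and the derivation identity above.
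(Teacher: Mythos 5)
Your proposal is correct and follows essentially the same route as the paper: the martingale and quadratic-variation claims via Propositions \ref{aregoodtest}, \ref{isgoodtestsquare} and \ref{gene}, the carr\'e-du-champ identity $\mathscr{L}^{\eps}|\Psi^{\eps,1}|^2-2\Psi^{\eps,1}\mathscr{L}^{\eps}\Psi^{\eps,1}=\eps^{-\alpha}(M|\Psi^{\eps,1}|^2-2\Psi^{\eps,1}M\Psi^{\eps,1})$ combined with \eqref{boundMMtest}, the decomposition $\Psi=\Psi^{\eps,1}+(\Psi-\Psi^{\eps}_*)-\eps^{\frac{\alpha}{2}}\varphi_1^{\eps}-\eps^{\alpha}\varphi_2^{\eps}$ together with the vanishing of the martingale increment against $G$ for \eqref{convergence1}--\eqref{convergence2}, and the bound \eqref{grandO} via Proposition \ref{ordre1} with $\lambda=1$ plus the trivial bound on $\mathscr{L}\Psi$ (the paper simply cites \eqref{ordre1dur} through the triangle inequality where you re-expand it, but the content is identical). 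No gaps.
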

\begin{proof}
For $i=1,2$, Proposition \ref{aregoodtest} gives that $\Psi^{\eps,i}$ is a good test function, and it implies, with Proposition \ref{gene}, that $M^{\eps}_i$ is a continuous and integrable martingale. Besides, with Proposition \ref{isgoodtestsquare}, $|\Psi^{\eps,1}|^2$ is a good test function, hence the formula for the quadratic variation of $M^{\eps}_1$.\\
Note that $\mathscr{L}^{\eps}|\Psi^{\eps,1}|^2-2\Psi^{\eps,1}\mathscr{L}^{\eps}\Psi^{\eps,1}=\eps^{-\alpha}(M|\Psi^{\eps,1}|^2-2\Psi^{\eps,1} M\Psi^{\eps,1})$ from which we deduce \eqref{borneducrochet} due to \eqref{boundMMtest}.\\
We continue with the proof of $(\ref{convergence1})$. Observe that $\Psi=\Psi^{\eps,1}+(\Psi-\Psi^{\eps}_*)-\eps^{\frac{\alpha}{2}}\varphi_1^{\eps}-\eps^{\alpha}\varphi_2^{\eps}$ so that we can write
\begin{multline*}
\Psi(f^{\eps}_t)-\Psi(f^{\eps}_s)-\int_s^t\mathscr{L}\Psi(\rho^{\eps}_{\sigma})\,\dd \sigma = M^{\eps}_1(t)-M^{\eps}_1(s)\\
+(\Psi-\Psi^{\eps}_*)(f^{\eps}_t)-(\Psi-\Psi^{\eps}_*)(f^{\eps}_s)-\eps^{\frac{\alpha}{2}}\varphi_1^{\eps}(f^{\eps}_t)-\eps^{\alpha}\varphi_2^{\eps}(f^{\eps}_t)\\
+\eps^{\frac{\alpha}{2}}\varphi_1^{\eps}(f^{\eps}_s)+\eps^{\alpha}\varphi_2^{\eps}(f^{\eps}_s)
+\int_s^t\mathscr{L}^{\eps}\Psi^{\eps,1}(f^{\eps}_{\sigma},m^{\eps}_{\sigma})-\mathscr{L}\Psi(\rho^{\eps}_{\sigma})\,\dd \sigma.
\end{multline*}
Then, we multiply by $G(\rho^{\eps}_{s_1},...,\rho^{\eps}_{s_n})$ and take the expectation. Note that, since $M^{\eps}_1$ is a martingale for the filtration $(\mathcal{F}^{\eps}_t)_{t\geq 0}$ generated by $(m^{\eps}_t,t\geq 0)$, we have $$\E[(M^{\eps}_1(t)-M^{\eps}_1(s))G(J^{-\eta}_r\rho^{\eps}_{s_1},...,J^{-\eta}_r\rho^{\eps}_{s_n})]=0.$$ 
Then, it suffices to use $(\ref{chiestimate})$, $(\ref{boundtest})$, $(\ref{ordre1simple})$, the uniform $\LL$ bound $(\ref{L2bound})$ and $\Psi(f)=\Psi(\rho F)$ to obtain $(\ref{convergence1})$. A similar work can be done to obtain $(\ref{convergence2})$.\\

\noindent It remains to prove $(\ref{grandO})$. We simply write, for $(f,n)\in\mathrm{D}(A)\times E$,
$$|\mathscr{L}^{\eps}\Psi^{\eps,1}(f,n)|\leq|\mathscr{L}^{\eps}\Psi^{\eps,1}(f,n)-\mathscr{L}\Psi(f,n)|+|\mathscr{L}\Psi(f,n)|.$$
We apply $(\ref{ordre1dur})$ with $\eps\leq 1$ and $\lambda=1$ so that
$$|\mathscr{L}^{\eps}\Psi^{\eps,1}(f,n)-\mathscr{L}\Psi(f,n)|\lesssim \|f\|(\|\varphi\|_{L^2_x}+\|\nabla_x\varphi\|_{L^2_x}+\|D^2\varphi\|_{L^2_x}).$$
Since, clearly,
$$|\mathscr{L}\Psi(f,n)|\lesssim\|f\|(\|\kappa(-\Delta)^{\frac{\alpha}{2}}\varphi\|_{L^2_x}+\|\varphi\|_{L^2_x}),$$
the proof is complete. $\Box$
\end{proof}

\section{Tightness}\label{tension}

In this section, in order to be able to take the limit $\eps\to 0$ in law of the family of processes $(\rho^{\eps})_{\eps>0}$, we prove its tightness in an appropriate space, namely $ C([0,T],S^{-\eta}(\R^d))$. Precisely, the result is the following.

\begin{prop}Let $\eta>0$. Then the family $(\rho^{\eps})_{\eps>0}$ is tight in $ C([0,T],S^{-\eta}(\R^d))$.
\end{prop}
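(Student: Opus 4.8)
The plan is to prove tightness through the Gelfand triple
$$
L^2(\R^d)=S^0(\R^d)\hookrightarrow S^{-\eta}(\R^d)\hookrightarrow S^{-\beta}(\R^d),
$$
where $\beta>\max(\eta,d+2)$ will be fixed and the first embedding is compact because the eigenvalues $\mu_j^{-\eta}$ of $J^{-\eta}$ tend to $0$. First I would record compact containment: by $(\ref{L2bound})$, the bound $\|\rho^{\eps}_t\|_{L^2_x}\leq\|f^{\eps}_t\|$ and Markov's inequality, the laws of $\rho^{\eps}_t$ concentrate, uniformly in $\eps$ and $t$, on balls of $L^2(\R^d)$, which are compact in $S^{-\eta}$. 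The modulus of continuity in $S^{-\eta}$ is then controlled by the spectral interpolation inequality $\|u\|_{S^{-\eta}}\leq\|u\|_{L^2_x}^{1-\eta/\beta}\|u\|_{S^{-\beta}}^{\eta/\beta}$, which gives
$$
\sup_{|t-s|\leq\delta}\|\rho^{\eps}_t-\rho^{\eps}_s\|_{S^{-\eta}}\lesssim\Big(\sup_{t}\|f^{\eps}_t\|\Big)^{1-\eta/\beta}\Big(\sup_{|t-s|\leq\delta}\|\rho^{\eps}_t-\rho^{\eps}_s\|_{S^{-\beta}}\Big)^{\eta/\beta}.
$$
Since the first factor is bounded in probability by $(\ref{L2bound})$, everything reduces to tightness of $(\rho^{\eps})$ in the weaker space $C([0,T],S^{-\beta})$, which, via Arzel\`a--Ascoli and the compact containment above, amounts to an equicontinuity estimate in $S^{-\beta}$.

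To obtain the latter I would test against the Hermite basis. For each $j\in\N^d$, Proposition \ref{recap} applied with $\varphi=p_j$ produces a martingale $M^{\eps,j}_1$ and the decomposition $\rho^{\eps}_t=G^{\eps}_t+R^{\eps}_t$, where $\langle R^{\eps}_t,p_j\rangle$ collects the three corrector terms evaluated at $f^{\eps}_t$ and
$$
\langle G^{\eps}_t,p_j\rangle:=\Psi^{\eps,1}(f^{\eps}_0)+\int_0^t\mathscr{L}^{\eps}\Psi^{\eps,1}(f^{\eps}_s,m^{\eps}_s)\,\dd s+M^{\eps,j}_1(t).
$$
For the good part, the drift is bounded by $(\ref{grandO})$ and $(\ref{taillej})$, yielding $|\mathscr{L}^{\eps}\Psi^{\eps,1}|\lesssim\sup_t\|f^{\eps}_t\|(1+\mu_j)$, while Burkholder--Davis--Gundy together with the bracket bound $(\ref{borneducrochet})$ (recall $\|p_j\|_{L^2_x}=1$) gives $\E|M^{\eps,j}_1(t)-M^{\eps,j}_1(s)|^p\lesssim|t-s|^{p/2}$ uniformly in $j$. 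For $p>2$ this produces $\E|\langle G^{\eps}_t-G^{\eps}_s,p_j\rangle|^p\lesssim(1+\mu_j)^p|t-s|^{p/2}$, and summing against the weights $\mu_j^{-\beta}$ (Minkowski's inequality in $L^{p/2}(\Omega)$ and $\sum_j\mu_j^{-\beta}(1+\mu_j)^2<\infty$ for $\beta>d+2$) gives $\E\|G^{\eps}_t-G^{\eps}_s\|_{S^{-\beta}}^p\lesssim|t-s|^{p/2}$. As $p/2>1$ and the initial values $G^{\eps}_0=\rho^{\eps}_0+R^{\eps}_0$ are tight in $S^{-\beta}$, Kolmogorov's criterion makes $(G^{\eps})$ tight in $C([0,T],S^{-\beta})$.

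It remains to dispose of $R^{\eps}$, which is exactly where the difficulty lies: as a sum of correctors it has $\eps$-small amplitude but a priori bad time regularity (its increments are controlled only by those of $f^{\eps}$, which carry negative powers of $\eps$), so it cannot be handled by a direct modulus estimate. The remedy is to note that it is negligible in the weak norm $S^{-\beta}$. Indeed $(\ref{boundtest})$ controls the $\delta^{\eps}$- and $\theta^{\eps}$-correctors by $\eps^{\alpha/2}\|f^{\eps}_t\|$ uniformly in $j$, while $(\ref{chiestimate})$ and $(\ref{taillej})$ bound the $\chi^{\eps}$-corrector by $\|f^{\eps}_t\|(C_{\lambda}\eps\,\mu_j^{1/2}+\lambda)$; summing against $\mu_j^{-\beta}$ (convergent for $\beta>d+1$) and choosing $\lambda=\lambda(\eps)\to0$ with $C_{\lambda}\eps\to0$, which is possible since $\alpha<2$, gives $\E\sup_t\|R^{\eps}_t\|_{S^{-\beta}}^2\lesssim\Lambda(\eps)^2\,\E\sup_t\|f^{\eps}_t\|^2\to0$. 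Hence $R^{\eps}\to0$ in probability in $C([0,T],S^{-\beta})$, so $\rho^{\eps}=G^{\eps}+R^{\eps}$ is tight in $C([0,T],S^{-\beta})$; combined with the interpolation argument of the first paragraph this yields tightness in $C([0,T],S^{-\eta})$ for every $\eta>0$. The main obstacle is precisely the correctors' time irregularity, which I circumvent by measuring them in a space weak enough that their $\eps$-small amplitude dominates, upgrading back to $S^{-\eta}$ through the genuine $L^2$ bound on $\rho^{\eps}$ itself.
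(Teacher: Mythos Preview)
Your argument is correct and follows essentially the same route as the paper: your process $G^{\eps}$ is the paper's auxiliary process $\theta^{\eps}$, and your remainder $R^{\eps}$ is the paper's $\rho^{\eps}-\theta^{\eps}$; both proofs control the drift via $(\ref{grandO})$, the martingale via $(\ref{borneducrochet})$ and BDG, the correctors via $(\ref{chiestimate})$ and $(\ref{boundtest})$, and upgrade from a very negative $S^{-\beta}$ to $S^{-\eta}$ by interpolation against the genuine $L^2$ bound. The only cosmetic differences are that the paper chooses the weaker threshold $\gamma>d/2+1$, keeps $\lambda$ fixed until the final $\limsup$, and phrases the time regularity via a Sobolev embedding $W^{\upsilon,4}\hookrightarrow C^{0,\tau}$ rather than invoking Kolmogorov directly; your remark that $\alpha<2$ is needed for the choice of $\lambda(\eps)$ is unnecessary (any $\alpha>0$ suffices).
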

\begin{proof}

\noindent We will here specialize the test function $\varphi\in\mathcal{S}(\R^d)$ into the functions $(p_j)_{j\in\N^d}$, which are defined in Section \ref{notations}. So we set, for $j\in\N^d$ and $f\in\LL$, $$\Psi_j(f):=(f,p_jF),$$ and we index by $j\in\N^d$ all the corrections defined in Section \ref{sectioncorrect}.
Thanks to Proposition \ref{recap}, we consider the continuous martingales
$$M^{\eps}_{1,j}(t):=\Psi_{1,j}^{\eps}(f^{\eps}_t,m^{\eps}_t)-\Psi_{1,j}^{\eps}(f^{\eps}_0,m^{\eps}_0)-\int_0^t\mathscr{L}^{\eps}\Psi_{1,j}^{\eps}(f^{\eps}_s,m^{\eps}_s)\,\dd s.$$
We also define, for $j\in\N^d$ and $t\in[0,T]$,
$$\theta_j^{\eps}(t):=\Psi_j(f^{\eps}_0)+\int_0^t\mathscr{L}^{\eps}\Psi_{1,j}^{\eps}(f^{\eps}_s,m^{\eps}_s)\,\dd s+M_{1,j}^{\eps}(t).$$
Note that $$
\theta_j^{\eps}(t)=\Psi_j(f^{\eps}_0)+\Psi_{1,j}^{\eps}(f^{\eps}_t,m^{\eps}_t)-\Psi_{1,j}^{\eps}(f^{\eps}_0,m^{\eps}_0),$$
so that, with Cauchy-Schwarz inequality and $(\ref{boundtest})$, $$|\theta_j^{\eps}(t)|\lesssim\sup\limits_{t\in[0,T]}\|f^{\eps}(t)\|\|p_j\|_{L^2_x}=\sup\limits_{t\in[0,T]}\|f^{\eps}(t)\|.$$
Hence, by the uniform $\LL$ bound $(\ref{L2bound})$,
\begin{equation}\label{bornethetajeps}\E\sup\limits_{t\in [0,T]}\left|\theta_j^{\eps}(t)\right|\lesssim 1.\end{equation}

\noindent We now observe that, for $t\in[0,T]$,
\begin{multline*}
\Psi_j(f^{\eps}_t)-\theta^{\eps}_j(t)=\left[(\Psi_j-\Psi_{*,j}^{\eps})-\eps^{\frac{\alpha}{2}}\varphi_{1,j}^{\eps}-\eps^{\alpha}\varphi_{2,j}^{\eps}\right](f^{\eps}_t,m^{\eps}_t)\\
-\left[(\Psi_j-\Psi_{*,j}^{\eps})-\eps^{\frac{\alpha}{2}}\varphi_{1,j}^{\eps}-\eps^{\alpha}\varphi_{2,j}^{\eps}\right](f^{\eps}_0,m^{\eps}_0),
\end{multline*}
and it gives, with Cauchy-Schwarz inequality, $(\ref{chiestimate})$, $(\ref{boundtest}$), and $(\ref{taillej})$,
\begin{align}
\nonumber\left|\Psi_j(f^{\eps}_t)-\theta^{\eps}_j(t)\right|&\lesssim \sup\limits_{t\in [0,T]}\|f^{\eps}_t\|\|(\chi^{\eps}_j-p_j)F\|+(\eps^{\frac{\alpha}{2}}+\eps^{\alpha})\|f^{\eps}_t\|\|p_j\|_{L^2_x}\\
\nonumber&\leq \sup\limits_{t\in [0,T]}\|f^{\eps}_t\|(C_{\lambda}\eps\|\nabla_x p_j\|_{L^2_x}+\|p_j\|_{L^2_x}\lambda +(\eps^{\frac{\alpha}{2}}+\eps^{\alpha})\|p_j\|_{L^2_x})\\
\label{aaa}&\leq \sup\limits_{t\in [0,T]}\|f^{\eps}_t\|(C_{\lambda}\eps \mu_j^{\frac{1}{2}}+\lambda +\eps^{\frac{\alpha}{2}}+\eps^{\alpha}).
\end{align}

\noindent From now on, we fix $\gamma > d/2+1$. Observe that, by $(\ref{bornethetajeps})$, a.s. and for all $t\in[0,T]$, the series defined by $u^{\eps}_t:=\sum_{j\in\N^d}\theta^{\eps}_j(t)J^{-\gamma}p_j$ converges in $L^2(\R^d)$, which is embedded in $\mathcal{S}'(\R^d)$. We then set $$\theta^{\eps}_t:=J^{\gamma}\sum\limits_{j\in\N^d}\theta^{\eps}_j(t)J^{-\gamma}p_j,$$
which exists a.s. and for all $t\in[0,T]$ in $\mathcal{S}'(\R^d)$.
In fact, we see that a.s. and for all $t\in[0,T]$, $\theta^{\eps}_t$ is in $S^{-\gamma}(\R^d)$. Indeed, 
\begin{align*}
\|\theta^{\eps}_t\|^2_{S^{-\gamma}(\R^d)}&=\|J^{\gamma}u^{\eps}_t\|^2_{S^{-\gamma}(\R^d)}
=\|u^{\eps}_t\|^2_{L^2_x}<\infty.
\end{align*}
We point out that $\Psi_j(f^{\eps}_t)=(\rho^{\eps}_tF,p_jF)=(\rho^{\eps}_t,p_j)_x$ so that 
\begin{align*}
\langle \rho^{\eps}(t)-\theta^{\eps}(t),p_j\rangle&=\Psi_j(f^{\eps}_t)-\langle J^{\gamma}u^{\eps}_t,p_j\rangle
=\Psi_j(f^{\eps}_t)-\langle u^{\eps}_t,J^{\gamma}p_j\rangle\\
&=\Psi_j(f^{\eps}_t)-\langle u^{\eps}_t,p_j\rangle\mu_j^{\gamma}=\Psi_j(f^{\eps}_t)-\theta_j^{\eps}(t).
\end{align*}
By $(\ref{aaa})$, it permits to write, for $t\in[0,T]$,
\begin{align*}
\left\|\rho^{\eps}(t)-\theta^{\eps}(t)\right\|^2_{S^{-\gamma}(\R^d)}&
\lesssim \sum\limits_{j\in\N^d}\mu_j^{-2\gamma}\sup\limits_{t\in [0,T]}\|f^{\eps}_t\|^2(C_{\lambda}\eps^2\mu_j+\lambda^2 +\eps^{\alpha}+\eps^{2\alpha})\\
&\lesssim \sup\limits_{t\in [0,T]}\|f^{\eps}_t\|^2(C_{\lambda}\eps^2+\eps^{\alpha}+\eps^{2\alpha}+\lambda^2)
\end{align*}
where the second bound comes from our choice $\gamma >d/2+1$ (we recall, see Section \ref{notations}, that $\mu_j=2|j|+1$).  Thanks to the uniform $\LL$ bound $(\ref{L2bound})$, it finally leads to the following estimate:
\begin{equation}\label{sontproches}
\E\sup\limits_{t\in [0,T]}\left\|\rho^{\eps}(t)-\theta^{\eps}(t)\right\|_{S^{-\gamma}(\R^d)}\lesssim C_{\lambda}\eps+\eps^{\frac{\alpha}{2}}+\eps^{\alpha}+\lambda.
\end{equation}

\noindent We now fix $\eta>0$. For any $\delta>0$, let $$w(\rho,\delta):=\sup\limits_{|t-s|<\delta}\|\rho(t)-\rho(s)\|_{S^{-\eta}(\R^d)}$$
denote the modulus of continuity of a function $\rho\in C([0,T],S^{-\eta}(\R^d))$. Since the injection $L^2(\R^d)\subset S^{-\eta}(\R^d)$ is compact, and by Ascoli's theorem, the set 
$$K_R:=\left\lbrace\rho\in C([0,T],S^{-\eta}(\R^d)),\; \sup\limits_{t\in[0,T]}\|\rho\|_{L^2(\R^d)}\leq R,\; w(\rho,\delta)<\eps(\delta)\right\rbrace,$$
where $R>0$ and $\eps(\delta)\to 0$ when $\delta\to 0$, is compact in $ C([0,T],S^{-\eta}(\R^d))$. To prove the tightness of $(\rho^{\eps})_{\eps>0}$ in $ C([0,T],S^{-\eta}(\R^d))$, it thus suffices, see \cite{billingsley}, to prove that for all $\sigma>0$, there exists $R>0$ such that \begin{equation}\label{tight1}\PP(\sup\limits_{t\in[0,T]}\|\rho^{\eps}\|_{L^2(\R^d)}>R)<\sigma,
\end{equation}
and
\begin{equation}\label{tight2}\lim\limits_{\delta\to 0}\limsup\limits_{\eps\to 0}\PP(w(\rho^{\eps},\delta)>\sigma)=0.
\end{equation}
By the continuous embedding $L^2(\R^d)\subset S^{-\eta}(\R^d)$ and Markov's inequality, we have
$$\PP(\sup\limits_{t\in[0,T]}\|\rho^{\eps}\|_{L^2(\R^d)}>R)\leq\PP(\sup\limits_{t\in[0,T]}\|f^{\eps}\|_{\LL}>R)\leq\frac{1}{R}\E[\sup\limits_{t\in[0,T]}\|f^{\eps}\|_{\LL}],$$
and it gives $(\ref{tight1})$ thanks to the uniform $\LL$ bound $(\ref{L2bound})$.\\
Similarly, we will deduce $(\ref{tight2})$ by Markov's inequality and a bound on $\E[w(\rho^{\eps},\delta)]$ for $\delta>0$. Actually, by interpolation, the continuous embedding $L^2(\R^d)\subset S^{-\eta}(\R^d)$ and the uniform $\LL$ bound $(\ref{L2bound})$, we have 
$$\E\sup\limits_{|t-s|<\delta}\|\rho(t)-\rho(s)\|_{S^{-\eta^{\flat}}}\leq\E\sup\limits_{|t-s|<\delta}\|\rho(t)-\rho(s)\|^{\upsilon}_{S^{-\eta^{\sharp}}}$$
for a certain $\upsilon>0$ if $\eta^{\sharp}>\eta^{\flat}>0$. As a result, it is indeed sufficient to work with $\eta=\gamma$. In view of $(\ref{sontproches})$, we first want to obtain an estimate of the increments of $\theta^{\eps}$. We have, for $j\in\N^d$ and $0\leq s\leq t\leq T$,
$$\theta^{\eps}_j(t)-\theta^{\eps}_j(s)=\int_s^t\mathscr{L}^{\eps}\Psi_{1,j}^{\eps}(f^{\eps}_{\sigma},m^{\eps}_{\sigma})\,\dd \sigma+M^{\eps}_{1,j}(t)-M^{\eps}_{1,j}(s).$$
By $(\ref{grandO})$ and the uniform $\LL$ bound $(\ref{L2bound})$, we have
$$\E\left|\int_s^t\mathscr{L}^{\eps}\Psi_{1,j}^{\eps}(f^{\eps}_{\sigma},m^{\eps}_{\sigma})\,\dd \sigma\right|^4\lesssim C_j|t-s|^4,$$
where $$C_j:=(\|p_j\|_{L^2_x}+\|\nabla_xp_j\|_{L^2_x}+\|D^2p_j\|_{L^2_x}+\|(-\Delta)^{\frac{\alpha}{2}}p_j\|_{L^2_x}).$$
Furthermore, using Burkholder-Davis-Gundy inequality,
$$\E|M^{\eps}_{1,j}(t)-M^{\eps}_{1,j}(s)|^4\lesssim \E|\langle M^{\eps}_{1,j}\rangle_t-\langle M^{\eps}_{1,j}\rangle_s|^2,$$
and thanks to $(\ref{borneducrochet})$, the uniform $\LL$ bound $(\ref{L2bound})$ and the fact that $\|p_j\|_{L^2_x}=1$, we are led to
$$\E|M^{\eps}_j(t)-M^{\eps}_j(s)|^4\lesssim |t-s|^2.$$
Finally we have $\E|\theta^{\eps}_j(t)-\theta^{\eps}_j(s)|^4\lesssim (1+C_j)|t-s|^2$. Now, note that with $(\ref{taillej})$, 
$C_j\lesssim 1+\sqrt{\mu_j}+\mu_j$. Since we took $\gamma>d/2+1$, we can conclude that 
$$\E\|\theta^{\eps}_t-\theta^{\eps}_s\|_{S^{-\gamma}(\R^d)}^4\lesssim |t-s|^2.$$
It easily follows that, for $\upsilon<1/2$, $\E\|\theta^{\eps}\|_{W^{\upsilon,4}(0,T,S^{-\gamma}(\R^d))}^4\lesssim 1$ so that  by the Sobolev embedding $W^{\upsilon,4}(0,T,S^{-\gamma}(\R^d))\subset  C^{0,\tau}(0,T,S^{-\gamma}(\R^d))$ which holds true whenever $\tau<\upsilon-1/4$, we obtain that $\E w(\theta^{\eps},\delta)\lesssim \delta^{\tau}$ for a certain positive $\tau$.\\
Thus, we deduce, with $(\ref{sontproches})$,
\begin{align*} 
\E w(\rho^{\eps},\delta)& \leq 2\E\sup\limits_{t\in [0,T]}\left\|\rho^{\eps}_t-\theta^{\eps}_t\right\|_{S^{-\gamma}(\R^d)}+\E w(\theta^{\eps},\delta)\\
& \lesssim C_{\lambda}\eps+\eps^{\frac{\alpha}{2}}+\eps^{\alpha}+\lambda+ \delta^{\tau}.
\end{align*}
To conclude, we then have
\begin{align*}
\lim\limits_{\delta\to 0}\limsup\limits_{\eps\to 0}\PP(w(\rho^{\eps},\delta)>\sigma)&\leq\lim\limits_{\delta\to 0}\limsup\limits_{\eps\to 0}\sigma^{-1}\E w(\rho^{\eps},\delta) \lesssim \sigma^{-1}\lambda,
\end{align*}
and since $\lambda>0$ was arbitrary, we just proved $(\ref{tight2})$. This concludes the proof. $\Box$
\end{proof}

\section{Convergence}

In this section, we conclude the proof of Theorem \ref{mainresult}. The idea is now, by the tightness result proved above and Prokhorov's Theorem, to take a subsequence of $(\rho^{\eps})_{\eps>0}$ that converges in law to some probability measure. Then we show that this limit probability is actually uniquely determined thanks to the convergences to the limit generator $\mathscr{L}$ proved above.\\

\noindent Let us fix $\eta>0$. By Proposition $\ref{tension}$ and Prokhorov's Theorem, there exist a subsequence of $(\rho^{\eps})_{\eps >0}$, still denoted $(\rho^{\eps})_{\eps >0}$, and a probability measure $P$ on $ C([0,T],S^{-\eta}(\R^d))$ such that
$$P^{\eps}\to P\text{ weakly on }  C([0,T],S^{-\eta}(\R^d)),$$
where $P^{\eps}$ stands for the law of $\rho^{\eps}$. We will now identify the probability measure $P$. Since $ C([0,T],S^{-\eta}(\R^d))$ is separable, we can apply Skohorod representation Theorem \cite{billingsley}, so that there exist a new probability space $(\widetilde{\Omega},\widetilde{\mathcal{F}},\widetilde{\PP})$ and random variables $$\widetilde{\rho^{\eps}},\widetilde{\rho}:\widetilde{\Omega}\to C([0,T],S^{-\eta}(\R^d)),$$ with respective law $P^{\eps}$ and $P$ such that $\widetilde{\rho^{\eps}}\to \widetilde{\rho}$ in $ C([0,T],S^{-\eta}(\R^d))$, $\widetilde{\PP}-$a.s. In the sequel, for the sake of clarity, we do not write any more the tildes.\\

\noindent Let us pass to the limit $\eps\to 0$ in the left-hand side of \eqref{convergence1}, namely in the quantity
$$
\E\left[\left(\Psi(\rho^{\eps}_t F)-\Psi(\rho^{\eps}_sF)-\int_s^t\mathscr{L}\Psi(\rho^{\eps}_{\sigma})\,\dd \sigma\right)G(\rho^{\eps}_{s_1},...,\rho^{\eps}_{s_n})\right]=:\E[\mathcal{A}(\rho^{\eps})].
$$
Without loss of any generality, we may assume that the function $G\in C_b((L^2(\R^d))^n)$ is also continuous on the space $H^{-\eta}$; this is always possible with an approximation argument: it suffices to consider $G_r:=G((\textrm{I}+rJ)^{-\frac{\eta}{2}}\cdot,...,(\textrm{I}+r J)^{-\frac{\eta}{2}}\cdot)$ as $r\to 0$. Then, with the $\PP-$a.s. convergence of $\rho^{\eps}$ to $\rho$ in the space $C([0,T],S^{-\eta}(\R^d))$, we have that
$$\mathcal{A}(\rho^{\eps})\to\mathcal{A}(\rho),\quad \text{a.s.}$$
Furthermore, thanks to the uniform $\LL$ bound \eqref{L2bound} and the boundedness of $G$, $(\mathcal{A}(\rho^{\eps}))_{\eps>0}$ is uniformly integrable since it is bounded in $L^2(\Omega)$, hence
$$\E\mathcal{A}(\rho^{\eps})\to\E\mathcal{A}(\rho).$$
As a consequence, we can now pass to the limit $\eps\to 0$ in \eqref{convergence1} to discover
$$
\left|\E\left[\left(\Psi(\rho_t F)-\Psi(\rho_sF)-\int_s^t\mathscr{L}\Psi(\rho_{\sigma})\,\dd \sigma\right)G(\rho_{s_1},...,\rho_{s_n})\right]\right|\lesssim C_{\varphi}\lambda.
$$
Since this holds true for arbitrary $\lambda > 0$, it yields
\begin{equation}\label{martingale1}
\E\left[\left(\Psi(\rho_t F)-\Psi(\rho_sF)-\int_s^t\mathscr{L}\Psi(\rho_{\sigma})\,\dd \sigma\right)G(\rho_{s_1},...,\rho_{s_n})\right]= 0.
\end{equation}
Similarly, we can pass to the limit $\eps\to 0$ in \eqref{convergence2}; it gives
\begin{equation}\label{martingale2}
\E\left[\left(|\Psi|^2(\rho_tF)-|\Psi|^2(\rho_sF)-\int_s^t\mathscr{L}|\Psi|^2(\rho_{\sigma})\,\dd \sigma\right)G(\rho_{s_1},...,\rho_{s_n})\right]= 0.
\end{equation}
\noindent Since $(\ref{martingale1})$ and $(\ref{martingale2})$ are valid for all $n\in\N$, $s_1\leq...\leq s_n\leq s\leq t \in[0,T]$ and all $G\in C_b((L^2(\R^d))^n)$, we deduce that 
$$N(t):=\Psi(\rho_tF)-\Psi(\rho_0 F)-\int_0^t\mathscr{L}\Psi(\rho_{\sigma})\,\dd \sigma,\quad t\in[0,T],$$
and
$$S(t):=|\Psi|^2(\rho_tF)-|\Psi|^2(\rho_0F)-\int_0^t\mathscr{L}|\Psi|^2(\rho_{\sigma})\,\dd \sigma, \quad  t\in[0,T],$$
are martingales with respect to the filtration generated by $(\rho_s)_{s\in[0,T]}$.
It implies that, see \cite[Appendix 6.9]{fgps}, the quadratic variation of $N$ is given by
$$\langle N\rangle_t=\int_0^t\left[\mathscr{L}|\Psi|^2(\rho_{\sigma})-2\Psi(\rho_{\sigma})\mathscr{L}\Psi(\rho_{\sigma})\right]\,\dd \sigma,\quad t\in[0,T].$$
Furthermore, we have
\begin{align*}
\mathscr{L}|\Psi|^2(\rho_{\sigma})-2\Psi(\rho_{\sigma})\mathscr{L}\Psi(\rho_{\sigma})&=-2\int_E(\rho_{\sigma}n,\varphi)_x(\rho_{\sigma}M^{-1}I(n),\varphi)_x\,\dd\nu(n)\\
&=2\E[\int_0^{\infty}(\rho_{\sigma}m_0,\varphi)_x(\rho_{\sigma}m_t,\varphi)_x\,\dd t]\\
&=\E[\int_{\R}(\rho_{\sigma}m_0,\varphi)_x(\rho_{\sigma}m_t,\varphi)_x\,\dd t]\\
&=\int_{\R^d}\int_{\R^d}\rho_{\sigma}(x)\varphi(x)\rho_{\sigma}(y)\varphi(y)k(x,y)\,\dd x \dd y\\
&=\|\rho_{\sigma}Q^{\frac{1}{2}}\varphi\|^2_{L^2_x}.
\end{align*}
Here, we recall that $\Psi(\rho F)=(\rho F,\varphi F)=(\rho,\varphi)_x$ and that the results above are valid for all $\varphi\in\mathcal{S}(\R^d)$. As a consequence, the martingale $N$ gives us that 
$$M(t):=\rho_t-\rho_0-\int_0^t[-\kappa(-\Delta)^{\frac{\alpha}{2}}\rho_{\sigma}-\frac{1}{2}\rho_{\sigma}H]\,\dd \sigma, \quad t\in[0,T],$$
is a continuous martingale in $L^2(\R^d)$ with respect to the filtration generated by $(\rho_s)_{s\in[0,T]}$ with quadratic variation
$$\langle M\rangle_t=\int_0^t(\rho_{\sigma}Q^{\frac{1}{2}})(\rho_{\sigma}Q^{\frac{1}{2}})^*\,\dd \sigma,\quad t\in[0,T].$$
Thanks to martingale representation Theorem, see \cite[Theorem 8.2]{daprato}, up to a change of probability space, there exists a cylindrical Wiener process $W$ in $L^2(\R^d)$ such that 
$$\rho_t-\rho_0-\int_0^t[-\kappa(-\Delta)^{\frac{\alpha}{2}}\rho_{\sigma}-\frac{1}{2}\rho_{\sigma}H]\,\dd \sigma=\int_0^t\rho_{\sigma}Q^{\frac{1}{2}}\,\dd W_{\sigma}, \quad t\in[0,T].$$
This equality gives that $\rho$ has the law of a weak solution to the equation $(\ref{stochasticeq})$ with paths in $C([0,T],S^{-\eta}(\R^d))$. Since this equation has a unique solution with paths in the space $C([0,T],S^{-\eta}(\R^d))\cap L^{\infty}([0,T],L^2(\R^d))$, and since pathwise uniqueness implies uniqueness in law, we deduce that $P$ is the law of this solution and is uniquely determined. Finally, by the uniqueness of the limit, the whole sequence $(P^{\eps})_{\eps>0}$ converges to $P$ weakly in the space of probability measures on $C([0,T],S^{-\eta}(\R^d))$. $\Box$

\nocite{degond}
\nocite{debouard}

\section*{Appendix A}\label{appendix}
\begin{proof}[Proof of Proposition \ref{chi}]
For the first bound, we write, thanks to Cauchy-Schwarz inequality,
\begin{alignat*}{2}
\|\chi^{\eps}F\|^2&=\int_{\R^d}\!\int_V\left(\int_0^{+\infty}e^{-t}\varphi(x+\eps vt)\,\dd t\right)^2F(v)\dd v\dd x\\
&\leq\int_{\R^d}\!\int_V\int_0^{+\infty}e^{-t}\varphi^2(x+\eps vt)F(v)\,\dd t\dd v\dd x \\
&=\|\varphi\|^2_{L^2_x}\int_V\int_0^{+\infty}e^{-t}F(v)\,\dd t\dd v=\|\varphi\|^2_{L^2_x}.
\end{alignat*}

\noindent To prove the second estimate, we fix $\lambda>0$. Since $F$ is integrable with respect to $v$, we take $C_{\lambda}>0$ such that $\int_{\left\lbrace |v|\geq C_{\lambda}\right\rbrace}F(v)\,\dd v < \lambda^2$. We have 
$$\|(\chi^{\eps}-\varphi)F\|^2=\int_{\R^d}\!\int_V\left(\int_0^{+\infty}e^{-t}[\varphi(x+\eps vt)-\varphi(x)]\,\dd t\right)^2F(v)\dd v\dd x.$$
Then we split the $v$-integral into two terms $\tau_1$ and $\tau_2$:
\begin{alignat*}{2}
\tau_1&:=\int_{\R^d}\int_{|v|\geq C_{\lambda}}\int_0^{+\infty}e^{-z}\left[\varphi(x+\eps v z)-\varphi(x)\right]^2F(v)\,\dd z\dd v\dd x \\
&\leq 2\int_{\R^d}\int_{ |v|\geq C_{\lambda}}\int_0^{+\infty}e^{-z}\left(|\varphi(x+\eps v z)|^2+|\varphi(x)|^2\right)F(v)\,\dd z\dd v\dd x\\
&=4\|\varphi\|_{L^2_x}^2\int_{ |v|\geq C_{\lambda}}\int_0^{+\infty}e^{-z}F(v)\,\dd z\dd v<4\|\varphi\|_{L^2_x}^2\lambda^2 \,\,;
\end{alignat*}

\begin{alignat*}{2}
\tau_2&:=\int_{\R^d}\int_{ |v|\leq C_{\lambda}}\int_0^{+\infty}e^{-z}\left[\varphi(x+\eps v z)-\varphi(x)\right]^2F(v)\,\dd z\dd v\dd x \\
&=\int_{\R^d}\int_{ |v|\leq C_{\lambda}}\int_0^{+\infty}e^{-z}\left(\int_0^1\eps zv\cdot\nabla_x\varphi(x+t\eps z v)\,\dd t\right)^2F(v)\,\dd z\dd v\dd x \\
&\leq\eps^2\int_{\R^d}\int_{|v|\leq C_{\lambda}}\int_0^{+\infty}\int_0^1 e^{-z}z^2|v|^2|\nabla_x\varphi(x+t\eps z v)|^2F(v)\,\dd t\dd z\dd v\dd x \\
&\leq 2\eps^2C_{\lambda}^2\|\nabla_x\varphi\|_{L^2_x}^2,
\end{alignat*}
and this is the result. $\Box$
\end{proof}

\begin{proof}[Proof of Lemma \ref{lemmelaplacienL2}]
We fix $\lambda>0$. Then we choose $C$ such that, for all $|v|\geq C$,
\begin{equation}\label{asymp}
\left|F(v)-\frac{\kappa_0}{|v|^{d+\alpha}}\right|\leq\frac{\lambda\kappa_0}{|v|^{d+\alpha}}.
\end{equation}
Now, we write, for $x\in\R^d$,
\begin{alignat*}{2}
&\eps^{-\alpha}\int_V\int_0^{+\infty}e^{-t}\left[\varphi(x+\eps vt)-\varphi(x)\right]F(v)\,\dd t\dd v\\
&=\eps^{-\alpha}\int_{|v|\leq C}\int_0^{+\infty}e^{-t}\left[\varphi(x+\eps vt)-\varphi(x)\right]F(v)\,\dd t\dd v\\
&+\eps^{-\alpha}\int_{|v|\geq C}\int_0^{+\infty}e^{-t}\left[\varphi(x+\eps vt)-\varphi(x)\right]\frac{\kappa_0}{|v|^{d+\alpha}}\,\dd t\dd v\\
&+\eps^{-\alpha}\int_{|v|\geq C}\int_0^{+\infty}e^{-t}\left[\varphi(x+\eps vt)-\varphi(x)\right]\left[F(v)-\frac{\kappa_0}{|v|^{d+\alpha}}\right]\,\dd t\dd v\\
&=:I_1(x)+I_2(x)+I_3(x).
\end{alignat*}
We begin by bounding $\|I_1\|^2_{L^2_x}$. Since $F(v)=F(-v)$, we rewrite $I_1(x)$ as follows
\begin{alignat*}{2}
I_1(x)&=\eps^{-\alpha}\int_{|v|\leq C}\int_0^{+\infty}e^{-t}\left[\varphi(x+\eps vt)-\varphi(x)-\eps vt\cdot\nabla_x\varphi(x)\right]F(v)\,\dd t\dd v\\
&=\eps^{-\alpha}\int_{|v|\leq C}\int_0^{+\infty}\int_0^1e^{-t}\left[D^2\varphi(x+\eps vts)(\eps vt,\eps vt)\right]F(v)\,\dd s\dd t\dd v.
\end{alignat*}
Then, with Cauchy-Schwarz inequality, we can write
\begin{alignat*}{2}
\|I_1\|^2_{L^2_x}&=\eps^{-2\alpha}\!\!\!\int_{\R^d}\left(\int_{|v|\leq C}\!\int_0^{+\infty}\!\!\!\!\int_0^1e^{-t}\left[D^2\varphi(x+\eps vts)(\eps vt,\eps vt)\right]F(v)\,\dd s\dd t\dd v\right)^2\,\dd x\\
&\leq \eps^{-2\alpha}\!\!\!\int_{\R^d}\int_{|v|\leq C}\!\int_0^{+\infty}\!\!\!\!\int_0^1e^{-t}\eps^4|v|^4t^4|D^2\varphi(x+\eps vts)|^2F(v)\,\dd s\dd t\dd v\,\dd x\\
&= \eps^{4-2\alpha}\|D^2\varphi\|^2_{L^2_x}\int_{|v|\leq C}\!\int_0^{+\infty}\!\!\!\!\int_0^1e^{-t}t^4|v|^4F(v)\,\dd s\dd t\dd v\\
&\leq 24C^4 \eps^{4-2\alpha}\|D^2\varphi\|^2_{L^2_x}.
\end{alignat*}
\noindent We are now interested in $I_2$. We first rewrite $I_2$ thanks to the change of variables $w:=\eps vt$
\begin{alignat*}{2}
I_2(x)&=\eps^{-\alpha}\int_0^{+\infty}\int_{|w|\geq C\eps t}e^{-t}\left[\varphi(x+w)-\varphi(x)\right]\frac{\kappa_0|\eps t|^{d+\alpha}}{|w|^{d+\alpha}}\,\frac{dw}{\eps^\dd t^d}\dd t\\
&=\kappa_0\int_0^{+\infty}\int_{|w|\geq C\eps t}e^{-t}|t|^{\alpha}\left[\varphi(x+w)-\varphi(x)\right]\frac{dw}{|w|^{d+\alpha}}\dd t.
\end{alignat*}
Here we recall that the fractional laplacian can be written as
\begin{alignat*}{2}
-(-\Delta)^{\frac{\alpha}{2}}\varphi(x)&=c_{d,\alpha}\mathrm{PV}\int_V\left[\varphi(x+w)-\varphi(x)\right]\frac{dw}{|w|^{d+\alpha}}\\
&=c_{d,\alpha}\int_{|w|\geq 1}\left[\varphi(x+w)-\varphi(x)\right]\frac{dw}{|w|^{d+\alpha}}\\
&\quad +c_{d,\alpha}\int_{|w|\leq 1}\left[\varphi(x+w)-\varphi(x)-w\cdot\nabla_x\varphi(x)\right]\frac{dw}{|w|^{d+\alpha}}\\
&= L_1(x) + L_2(x).
\end{alignat*}
It prompts us to use a similar decomposition of $I_2(x)$; we thus write
\begin{alignat*}{2}
I_2(x)&=\kappa_0\int_0^{1/(C\eps)}e^{-t}|t|^{\alpha}\int_{|w|\geq 1}\left[\varphi(x+w)-\varphi(x)\right]\frac{dw}{|w|^{d+\alpha}}\dd t\\
&+\kappa_0\int_0^{1/(C\eps)}e^{-t}|t|^{\alpha}\int_{C\eps t\leq |w|\leq 1}\left[\varphi(x+w)-\varphi(x)-w\cdot\nabla_x\varphi(x)\right]\frac{dw}{|w|^{d+\alpha}}\dd t\\
&+\kappa_0\int_{1/(C\eps)}^{+\infty}e^{-t}|t|^{\alpha}\int_{|w|\geq C\eps t}\left[\varphi(x+w)-\varphi(x)\right]\frac{dw}{|w|^{d+\alpha}}\dd t\\
&=J_1(x)+J_2(x)+J_3(x).
\end{alignat*}

\noindent We recall the definition $(\ref{defkappa})$ of $\kappa$ :
$$\kappa=\frac{\kappa_0}{c_{d,\alpha}}\int_0^{+\infty}e^{-t}|t|^{\alpha}\,\dd t.$$
Then, with Cauchy-Schwarz inequality,
\begin{alignat*}{2}
\|J_1-\kappa L_1\|^2_{L^2_x}&=\int_{\R^d}\left(\kappa_0\int_{1/(C\eps)}^{+\infty}e^{-t}|t|^{\alpha}\int_{|w|\geq 1}\left[\varphi(x+w)-\varphi(x)\right]\frac{dw}{|w|^{d+\alpha}}\dd t\right)^2\,\dd x\\
&\!\!\!\!\!\!\!\!\!\!\!\!\!\!\!\!\!\!\!\!\!\!\!\!\!\!\!\!\!\!\leq \kappa_0^2\left(\int_{|w|\geq 1}\frac{dw}{|w|^{d+\alpha}}\right)\int_{\R^d}\int_{1/(C\eps)}^{+\infty}e^{-t}|t|^{2\alpha}\int_{|w|\geq 1}\left[\varphi(x+w)-\varphi(x)\right]^2\frac{dw}{|w|^{d+\alpha}}\dd t\,\dd x\\
&\leq 4\kappa_0^2\left(\int_{|w|\geq 1}\frac{dw}{|w|^{d+\alpha}}\right)^2\|\varphi\|^2_{L^2_x}\int_{1/(C\eps)}^{+\infty}e^{-t}|t|^{2\alpha}\,\dd t.
\end{alignat*}
To continue, we decompose $J_2(x)-\kappa L_2(x)$ into two terms $\tau_1(x)+\tau_2(x)$
\begin{multline*}
-\kappa_0\int_0^{1/(C\eps)}e^{-t}|t|^{\alpha}\int_{0\leq|w|\leq C\eps t}\int_0^1D^2\varphi(x+ws)(w,w)\dd s\frac{dw}{|w|^{d+\alpha}}\dd t\\
-\kappa_0\int_{1/(C\eps)}^{+\infty}e^{-t}|t|^{\alpha}\int_{|w|\leq 1}\int_0^1D^2\varphi(x+ws)(w,w)\dd s\frac{dw}{|w|^{d+\alpha}}\dd t.
\end{multline*}
We work on $\|\tau_1\|^2_{L^2_x}$, using Cauchy-Schwarz inequality, and the change of variables $v=w/(\eps t)$:
\begin{alignat*}{2}
\|\tau_1\|^2_{L^2_x}&=\int_{\R^d}\left(\kappa_0\int_0^{1/(C\eps)}e^{-t}|t|^{\alpha}\int_{0\leq|w|\leq C\eps t}\int_0^1D^2\varphi(x+ws)(w,w)\dd s\frac{dw}{|w|^{d+\alpha}}\dd t\right)^2\,\dd x\\
&\leq \int_{\R^d}\left(\kappa_0\int_0^{1/(C\eps)}e^{-t}|t|^{\alpha}\int_{0\leq|w|\leq C\eps t}\int_0^1|D^2\varphi(x+ws)|\dd s\frac{dw}{|w|^{d+\alpha-2}}\dd t\right)^2\,\dd x\\
&\!\!\!\!\!\!\!\!\!\!\!\!\!\!\!\!\!\!\leq\kappa_0^2\int_{|w|\leq 1}\frac{dw}{|w|^{d+\alpha-2}} \int_{\R^d}\int_0^{1/(C\eps)}e^{-t}|t|^{2\alpha}\int_{0\leq|w|\leq C\eps t}\int_0^1|D^2\varphi(x+ws)|^2\dd s\frac{dw}{|w|^{d+\alpha-2}}\dd t\,\dd x\\
&\!\!\!\!\!\!\!\!\!\!\!\!\!\!\!\!\!\!\leq\kappa_0^2\int_{|w|\leq 1}\frac{dw}{|w|^{d+\alpha-2}} \|D^2\varphi\|^2_{L^2_x}\int_0^{+\infty}e^{-t}|t|^{2\alpha}\int_{0\leq|w|\leq C\eps t}\frac{dw}{|w|^{d+\alpha-2}}\dd t\\
&=\kappa_0^2\int_{|w|\leq 1}\frac{dw}{|w|^{d+\alpha-2}}\int_0^{+\infty}e^{-t}|t|^{\alpha+2}\dd t\int_{|v|\leq C}\frac{\dd v}{|v|^{d+\alpha-2}}\eps^{2-\alpha} \|D^2\varphi\|^2_{L^2_x}.
\end{alignat*}
With the same kind of computations, we are led to
$$\|\tau_2\|^2_{L^2_x}\leq\kappa_0^2\left(\int_{|w|\leq 1}\frac{dw}{|w|^{d+\alpha-2}}\right)^2\|D^2\varphi\|^2_{L^2_x}\int_{1/(C\eps)}^{+\infty}e^{-t}|t|^{2\alpha}\,\dd t,$$
and
$$\|J_3\|^2_{L^2_x}\leq 4\kappa_0^2\left(\int_{|w|\geq 1}\frac{dw}{|w|^{d+\alpha}}\right)^2\|\varphi\|^2_{L^2_x}\int_{1/(C\eps)}^{+\infty}e^{-t}|t|^{2\alpha}\,\dd t.$$
Finally, about the case of $I_3$, thanks to $(\ref{asymp})$, we can do the same work as for $I_2$; then we just have to put together all the bounds obtained to get the result. This concludes the proof. $\Box$
\end{proof}

\begin{proof}[Proof of Lemma \ref{lemmeLaplacien}]
First, we write
\begin{alignat*}{2}
\eps^{-\alpha}(Lf+\eps Af,\chi^{\eps}F)&=\eps^{-\alpha}\int_{\R^d}\int_V\rho F\chi^{\eps}-f\chi^{\eps} -\eps v\cdot\nabla_xf\chi^{\eps}\,\dd v\dd x \\
&=\eps^{-\alpha}\int_{\R^d}\int_V\rho F\chi^{\eps}-f(\chi^{\eps}-\eps v\cdot\nabla_x\chi^{\eps})\,\dd v\dd x \\
&=\eps^{-\alpha}\int_{\R^d}\int_V\rho F\chi^{\eps}-f\varphi\,\dd v\dd x =\int_{\R^d}\rho\int_V\eps^{-\alpha}\left[\chi^{\eps}-\varphi\right]F\,\dd v\dd x,
\end{alignat*}
where we used an integration by part and $(\ref{aux})$. Furthermore, we have 
\begin{align*}
(-\kappa(-\Delta)^{\frac{\alpha}{2}}f,\varphi F)&=(f,-\kappa(-\Delta)^{\frac{\alpha}{2}}\varphi F)\\
&=-\kappa\int_{\R^d}\int_V f(-\Delta)^{\frac{\alpha}{2}}\varphi\,\dd v\dd x \\
&=-\kappa\int_{\R^d}\rho(-\Delta)^{\frac{\alpha}{2}}\varphi\,\dd v\dd x.
\end{align*}
As a consequence, with Cauchy-Schwarz inequality, we get
\begin{align*}
\eps^{-\alpha}(\eps Af+Lf,\chi^{\eps}F)+(\kappa(-\Delta)^{\frac{\alpha}{2}}f,\varphi F)
&=\int_{\R^d}\rho\left[\int_V\eps^{-\alpha}\left[\chi^{\eps}-\varphi\right]F\dd v+\kappa(-\Delta)^{\frac{\alpha}{2}}\varphi\right]\dd x\\
&\!\!\!\!\!\!\!\!\!\!\!\!\!\!\!\!\!\!\!\!\leq \|\rho\|_{L^2_x}\left\|\int_V\eps^{-\alpha}\left[\chi^{\eps}-\varphi\right]F\dd v+\kappa(-\Delta)^{\frac{\alpha}{2}}\varphi\right\|_{L^2_x} \\
&\!\!\!\!\!\!\!\!\!\!\!\!\!\!\!\!\!\!\!\!\leq \|f\|\left\|\int_V\eps^{-\alpha}\left[\chi^{\eps}-\varphi\right]F\dd v+\kappa(-\Delta)^{\frac{\alpha}{2}}\varphi\right\|_{L^2_x},
\end{align*}
and an application of Lemma $\ref{lemmelaplacienL2}$ then concludes the proof. $\Box$
\end{proof}

\bibliography{biblio}

\begin{thebibliography}{10}

\bibitem{basilebovier}
G.~{Basile} and A.~{Bovier}.
\newblock {\em {Convergence of a kinetic equation to a fractional diffusion
  equation}}.
\newblock ArXiv:0909.3385. 2009.

\bibitem{billingsley}
P.~Billingsley.
\newblock {\em Convergence of Probability Measures}.
\newblock Wiley Series in Probability and Statistics. John Wiley \& Sons, 2009.

\bibitem{cazhar}
T.~Cazenave and A.~Haraux.
\newblock {\em An introduction to semilinear evolution equations}.
\newblock Oxford Lecture Series in Mathematics and its Applications. Clarendon
  Press, 1998.

\bibitem{daprato}
G.~Da~Prato and J.~Zabczyk.
\newblock {\em Stochastic Equations in Infinite Dimensions}.
\newblock Encyclopedia of Mathematics and Its Applications. Cambridge
  University Press, 2008.

\bibitem{arnaudjulien}
A.~{Debussche} and J.~{Vovelle}.
\newblock {\em Diffusion limit for a stochastic kinetic problem}.
\newblock ArXiv:1103.2664. 2011.

\bibitem{degond}
Pierre Degond, Thierry Goudon, and Frederic Poupaud.
\newblock {\em Diffusion Limit For Non Homogeneous And Non-Micro-Reversible
  Processes}.
\newblock Indiana Univ. Math. J. 2000.

\bibitem{fgps}
J.P. Fouque, J.~Garnier, G.~Papanicolaou, and K.~Solna.
\newblock {\em Wave Propagation and Time Reversal in Randomly Layered Media}.
\newblock Stochastic Modelling and Applied Probability. Springer, 2010.

\bibitem{debouard}
M.~{Gazeau} and A.~{De Bouard}.
\newblock {\em A diffusion approximation theorem for a nonlinear PDE with
  application to random birefringent optical fibers}.
\newblock ArXiv e-prints. May 2011.

\bibitem{jara}
Milton Jara, Tomasz Komorowski, and Stefano Olla.
\newblock {Limit theorems for additive functionals of a Markov chain}.
\newblock {\em The Annals of Applied Probability}, 19(6):2270--2300, December
  2009.

\bibitem{mellet}
A.~{Mellet}.
\newblock {\em {Fractional diffusion limit for collisional kinetic equations: A
  moments method}}.
\newblock ArXiv:0910.1570v1. 2011.

\bibitem{MMM}
A.~{Mellet}, S.~{Mischler}, and C.~{Mouhot}.
\newblock {\em {Fractional diffusion limit for collisional kinetic equations}}.
\newblock ArXiv:0809.2455. 2008.

\bibitem{psv}
G.~C. Papanicolaou, D.~Stroock, and S.~R.~S. Varadhan.
\newblock {\em Martingale approach to some limit theorems}.
\newblock Duke Univ. Math. Ser. Duke Univ., 1977.

\bibitem{laplfrac}
E.~{Valdinoci}.
\newblock {\em {From the long jump random walk to the fractional Laplacian}}.
\newblock ArXiv:0901.3261. 2009.

\end{thebibliography}
\end{document}